\newtheorem{theorem}{Theorem}[section]
\newtheorem{lemma}[theorem]{Lemma}
\newtheorem{proposition}[theorem]{Proposition}
\newtheorem{corollary}[theorem]{Corollary}
\theoremstyle{definition}
\newtheorem{definition}[theorem]{Definition}
\newtheorem{example}[theorem]{Example}
\newtheorem{remark}[theorem]{Remark}
\def\car#1,#2,#3,#4{
$$
   \CD
   #1           @>{}>>          #2        \\
   @V{{}}VV                  @VV{{}}V  \\
   #3        @>{{}}>>   #4
   \endCD
   $$}
\def\Car#1,#2,#3,#4,#5,#6,#7,#8{
$$
   \CD
   #1           @>#2>>          #3        \\
   @V{#4}VV                  @VV{#5}V  \\
   #6        @>{#7}>>   #8
   \endCD
   $$}
\begin{document}

\title[ nagata extensions]{ Some more combinatorics results on Nagata Extensions}
\author[G. Picavet and M. Picavet]{Gabriel Picavet and Martine Picavet-L'Hermitte}
\address{Universit\'e Blaise Pascal \\
Laboratoire de Math\'ematiques\\ UMR6620 CNRS  \\ 24, avenue des Landais\\
BP 80026 \\ 63177 Aubi\`ere CEDEX \\ France}

\email{Gabriel.Picavet@math.univ-bpclermont.fr}
\email{picavet.gm(at)wanadoo.fr}

\begin{abstract}  We show that the length of a  ring extension $R\subseteq S$ is preserved under the formation of the Nagata extension $R(X)\subseteq S(X)$. A companion  result holds for the Dobbs-Mullins invariant. D. Dobbs and the authors proved elsewhere that the cardinal number of the set $[R,S]$ of subextensions of $R \subseteq S$ is preserved under the formation of Nagata extension when $|[R(X),S(X)]|$ is finite. We show that in the only pathological case, namely $R\subseteq S$ is subintegral, then $|[R,S]|$ is preserved if and only if it is either infinite or finite and $R\subseteq S$ is arithmetic; that is, $[R,S]$ is locally a chain.  The last section gives properties of arithmetic  extensions and their links with Pr\"ufer extensions.
\end{abstract}

\subjclass[2010]{Primary:13B02,13B21,13B25, 12F05;~Secondary:13B22, 13B30}

\keywords  {arithmetic extension, FIP, FCP, FMC  extension, minimal extension, Pr\"ufer extension, integral extension, support of a module, Nagata ring, t-closure}

\maketitle

\section{Introduction and Notation}

We consider the category of commutative and unital rings and  first give some notation and definitions, needed for explaining the subject of the paper.
 Let $R\subseteq S$ be a (ring) extension. The set of all $R$-subalgebras of $S$ is denoted by $[R,S]$ and the integral closure of $R$ in $S$ by $\overline R$.  As usual, Spec$(R)$, Max$(R)$ and $\mathrm{Min}(R)$ are the sets of prime ideals, maximal ideals and minimal prime ideals of a ring $R$. Moreover, $\mathrm{Tot}(R)$ denotes the total quotient ring of a ring $R$. 

The support of an $R$-module $E$ is $\mathrm{Supp}_R(E):=\{P\in\mathrm{Spec}(R)\mid E_P\neq 0\}$, and $\mathrm{MSupp}_R(E):=\mathrm{Supp}_R(E)\cap\mathrm{Max}(R)$ is also the set of all maximal elements of $\mathrm{Supp}_R(E)$. If $E$ is an $R$-module, ${\mathrm L}_R(E)$ is its length. If $R\subseteq S$ is a ring extension and $P\in\mathrm{Spec}(R)$, then $S_P$ is both the localization $S_{R\setminus P}$ as a ring and the localization at $P$ of the $R$-module $S$. We denote by $(R:S)$ the conductor of $R\subseteq S$. Finally, $\subset$ denotes proper inclusion and $|X|$ the cardinality of a set $X$.

The extension $R\subseteq S$ is said to have FIP (for the ``finitely many intermediate algebras property") if $[R,S]$ is finite. A {\it chain} of $R$-subalgebras of $S$ is a set of elements of $[R,S]$ that are pairwise comparable with respect to inclusion. An extension $R\subseteq S$ is called a {\it chained} extension if $[R,S]$ is a chain. We say that the extension $R\subseteq S$ has FCP (for the ``finite chain property") if each chain in $[R,S]$ is finite. It is clear that each extension that satisfies FIP must also satisfy FCP. Dobbs and the authors characterized FCP and FIP extensions \cite{DPP2}. Minimal (ring) extensions,  introduced by Ferrand-Olivier \cite{FO}, are an important tool of the paper. Recall that an extension $R\subset S$ is called {\it minimal} if $[R,S]=\{R,S\}$. The key connection between the above ideas is that if $R\subseteq S$ has FCP, then any maximal (necessarily finite) chain of $R$-subalgebras of $S$, $R=R_0\subset R_1\subset\cdots\subset R_{n-1}\subset R_n=S$, with {\it length} $n<\infty$, results from juxtaposing $n$ minimal extensions $R_i\subset R_{i+1},\ 0\leq i\leq n-1$.  For any extension $R\subseteq S$, the {\it length} of $[R,S]$, denoted by $\ell[R,S]$, is the supremum of the lengths of chains of $R$-subalgebras of $S$. It should be noted that if $R\subseteq S$ has FCP, then there {\it does} exist some maximal chain of $R$-subalgebras of $S$ with length $\ell[R,S]$ \cite[Theorem 4.11]{DPP3}.

In passing we also consider a condition weaker than FCP on an extension $R \subset S$, recently explored by Ayache and Dobbs in \cite{AD}: there is a finite maximal chain in $[R,S]$ from $R$ to $S$ (condition FMC for some authors). Then \cite[Theorem 4.12]{AD} combined with \cite[Proposition 4.2]{FO} and \cite[Theorem 4.2]{DPP2} yields the following result, which may be useful to detect an FCP extension. 

\begin{proposition}\label{1.0} Let $R\subset S$ be an FMC extension of rings. Then $R\subset S$ satisfies FCP if and only the length of the $R$-module $\overline R/R$ is finite, or equivalently $R\subseteq \overline R $ has FCP.

\end{proposition}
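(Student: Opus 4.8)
The plan is to work throughout with the tower $R\subseteq\overline R\subseteq S$, in which $R\subseteq\overline R$ is integral and $\overline R\subseteq S$ is integrally closed, and to reduce the whole statement to two things: the classical length-theoretic description of FCP for integral extensions, and the fact that under the FMC hypothesis the integrally closed part $\overline R\subseteq S$ is automatically FCP. Granting these, the proposition falls out by gluing FCP across $\overline R$.

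First I would settle the equivalence of the last two conditions. Since $R\subseteq\overline R$ is integral, every minimal subextension inside it is module-finite and its quotient is a simple module, so a chain in $[R,\overline R]$ of length $k$ corresponds to a filtration of $\overline R/R$ with $k$ simple factors. Consequently the existence of a finite maximal chain in $[R,\overline R]$, the finiteness of ${\mathrm L}_R(\overline R/R)$, and FCP for $R\subseteq\overline R$ are all the same assertion; this is precisely \cite[Theorem 4.2]{DPP2}. It therefore suffices to prove, under the FMC hypothesis, the equivalence of FCP for $R\subset S$ with FCP for $R\subseteq\overline R$.

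The forward implication is immediate, since FCP passes to subextensions: every chain in $[R,\overline R]$ is a chain in $[R,S]$, so if $[R,S]$ has no infinite chain then neither does $[R,\overline R]$. For the converse I would use the decomposition of FCP along the integral closure, namely that $R\subset S$ has FCP exactly when both $R\subseteq\overline R$ and $\overline R\subseteq S$ do. Assuming $R\subseteq\overline R$ has FCP, the entire problem collapses to showing that the integrally closed extension $\overline R\subseteq S$ has FCP, and this is where the FMC hypothesis is used. By \cite[Proposition 4.2]{FO} every minimal step occurring in the integrally closed part must be a flat epimorphism rather than an integral extension, and by \cite[Theorem 4.12]{AD} the presence of a single finite maximal chain from $R$ to $S$ forces such a flat-epimorphic (Pr\"ufer-type) extension to satisfy FCP; in other words, for the integrally closed piece FMC already upgrades to FCP. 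Recombining the two FCP halves then gives FCP for $R\subset S$.

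I expect the crux to be exactly this converse direction, and specifically the claim that FMC implies FCP on $\overline R\subseteq S$. The subtlety is that FMC only supplies \emph{one} finite maximal chain while FCP demands that \emph{every} chain be finite, so the implication is not formal; it rests on the rigidity of flat-epimorphic minimal extensions furnished by \cite[Proposition 4.2]{FO} together with \cite[Theorem 4.12]{AD}, which rules out an integrally closed extension possessing a finite maximal chain while still concealing an infinite one. The integral half, by contrast, is routine once the length interpretation of FCP is in hand.
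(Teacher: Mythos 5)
Your proposal is correct and follows essentially the same route as the paper, whose entire proof is the one-line combination of \cite[Theorem 4.12]{AD}, \cite[Proposition 4.2]{FO} and \cite[Theorem 4.2]{DPP2}: you use \cite[Theorem 4.2]{DPP2} for the equivalence on the integral part $R\subseteq\overline R$, and \cite[Theorem 4.12]{AD} together with \cite[Proposition 4.2]{FO} to upgrade FMC to FCP on the integrally closed part $\overline R\subseteq S$, gluing the two halves by the standard decomposition of FCP at $\overline R$. One caveat: your parenthetical claim that every minimal integral subextension has a \emph{simple} quotient module is false in the inert case (a minimal field extension of degree $d$ yields a quotient of length $d-1$, so chain length and module length need not coincide), but this does not damage the argument, since the equivalence you actually invoke is exactly the cited \cite[Theorem 4.2]{DPP2}.
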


We note here that $\overline R \subseteq S$ has FIP when $R\subseteq S$ has FCP \cite[Theorem 6.3]{DPP2}.

Let $R$ be a ring and $R[X]$ the polynomial ring in the indeterminate $X$ over $R$. (Throughout, we use $X$ to denote an element that is indeterminate over all relevant coefficient rings.) Also, let $C(p)$ denote the content of any polynomial $p(X)\in R[X]$. Then $\Sigma_R:=\{p(X)\in R[X]\mid C(p)=R\}$ is a
saturated multiplicatively closed subset of $R[X]$, each of whose elements is a non-zero-divisor of $R[X]$. The {\it Nagata ring of} $R$ is defined to be $R(X):= R[X]_{\Sigma_R}$. 

Let $R\subseteq S$ be an extension. It was shown in \cite[Theorem 3.9]{DPP3} that $R(X)\subseteq S(X)$ has FCP if and only if $R\subseteq S$ has FCP. One aim of this paper is to show that, when $R\subseteq S$ has FCP, then $\ell[R,S]=\ell[R(X),S(X)]$, a question addressed in \cite[Remark 4.18(b)]{DPP3}.

We begin to show that this property holds for FCP field extensions in Section 2. The main result is gotten in Section 3 where, after several steps involving the integral closure and the t-closure of an FCP extension, we prove in Theorem~\ref{3.3} that, when $R\subseteq S$ has FCP, then $\ell[R,S]=\ell[R(X),S (X)]$. We also introduce  the Dobbs-Mullins  invariant of an extension $R \subseteq S$ as  being the supremum $\Lambda(S/R)$ of the lengths  of residual extensions of $R\subseteq S$, considered as ring extensions  \cite{DM}. We show in Theorem~\ref{3.7} that $\Lambda(S/R) = \Lambda(S(X)/R(X))$.

We will have to consider the following material.

\begin{definition}\label{1.3} Let $R\subseteq S$ be an integral extension. Then $R\subseteq S$ is called {\bf infra-integral} \cite{Pic 2} (resp$.$ {\bf subintegral} \cite{S}) if all its residual extensions $R_P/PR_P\to S_Q/QS_Q$, (with $Q\in\mathrm {Spec}(S)$ and $P:=Q\cap R$) are isomorphisms (resp$.$ and the spectral map $\mathrm{Spec}(S)\to \mathrm{Spec}(R)$ is bijective). An extension $R\subseteq S$ is called {\bf t-closed} (cf. \cite{Pic 2}) if the relations $b\in S,\ r\in R,\ b^2-rb\in R,\ b^3-rb^2\in R$ imply $b\in R$. The $t$-{\bf closure} ${}_S^tR$ of $R$ in $S$ is the smallest $R$-subalgebra $B$ of $S$ such that $B\subseteq S$ is t-closed and the greatest $ B'\in[R,S]$ such that $R\subseteq B'$ is infra-integral.

The canonical decomposition of an arbitrary ring extension $R\subset S$ is $R \subseteq {}^+_S R \subseteq {}_S^tR \subseteq \overline R \subseteq S$, where $ {}^+_S R$ is the seminormalization of $R$ in $S$ (see \cite{S}).
\end{definition}

The other aim is achieved in Section 4. It consists to improve a characterization of the transfer of the FIP property for subintegral extensions of Nagata rings (see \cite[Theorem 3.30]{DPP3}). We consider only this (pathological) case because in the canonical decomposition of a ring extension, the subintegral part $R \subseteq {}^+_S R $ is the only obstruction for $R(X)\subseteq S(X)$ having FIP \cite[Theorem 3.21]{DPP3}.  This leads us to introduce extensions $R\subseteq  S$ such that $R_M \subseteq S_M$ is a chained extension for each $M\in\mathrm{Supp}_R(S/R)$. Such extensions are called {\it arithmetic}, the definition being reminiscent of arithmetic rings. Note that $\mathrm{Supp}(S/R)$ can be replaced with one of the following subsets $\mathrm{Spec}(R)$, $\mathrm{Max}(R)$, $\mathrm{MSupp}(S/R)$), since the natural map $[R,S] \to [R_P,S_P]$ is surjective for each $P\in \mathrm{Spec}(R)$.  We show in Theorem~\ref{4.3} that if $R\subset S$ is a subintegral extension, then  $R(X)\subset S(X)$ has FIP if and only if  $R\subset S$ has FIP and is arithmetic.

For an FCP extension $R\subseteq S$, it will be convenient to consider $\mathrm{MSupp}(S/R)$. Observe that an FCP extension $R\subseteq S $ is arithmetic if and only if $R_M\to S_M$ can be factored into a unique finite sequence of minimal morphisms, for each $M\in \mathrm{MSupp}(S/R)$. 

 Moreover, if $R\subseteq  T \subseteq S$ is an arithmetic extension, then so are $R\subseteq T$ and $T\subseteq S$.  Let $R\subseteq S$ be an extension with conductor $C:= (R:S)$. It is clear that $R\subseteq S$ is arithmetic if and only if $R/C\subseteq S/C$ is arithmetic.

The paper ends with Section 5, that contains  results on arithmetic extensions.

The following notions and results are also deeply involved in our study.

\begin{theorem}\label{1.2}\cite[Th\'eor\`eme 2.2 and Lemme 3.2]{FO} Let $A\subset B$ be a minimal extension. Then, there is some $M\in\mathrm{Max}(A)$, called the {\bf crucial (maximal) ideal} of $A\subset B$, such that $A_P=B_P$ for each $P\in\mathrm{Spec}(A)\setminus\{M\}$. We denote this ideal $M$ by $\mathcal{C}(A,B)$.

Moreover, $A\subset B$ is either an integral (finite) extension, or a flat epimorphism, these two conditions being mutually exclusive.
\end{theorem}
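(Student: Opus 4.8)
The plan is to prove the two assertions separately and to obtain the crucial ideal uniformly (in both cases) from the support of the $A$-module $B/A$. Since $A\subsetneq B$, the module $B/A$ is nonzero, so $\mathrm{Supp}_A(B/A)$ is a nonempty subset of $\mathrm{Spec}(A)$, and I will use that an $A$-submodule of $B$ is recovered from its localizations. The key point is that $\mathrm{Supp}_A(B/A)$ is a single maximal ideal. To see this, for $P\in\mathrm{Supp}_A(B/A)$ I form $T_P:=\{x\in B:x/1\in A_P\subseteq B_P\}$, the preimage of $A_P$ under $B\to B_P$; it is a subring with $A\subseteq T_P\subseteq B$, and $T_P\neq B$ because the image of $B$ generates the localization $B_P$ over $A_P$ while $A_P\subsetneq B_P$. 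From the injection $B/T_P\hookrightarrow B_P/A_P=(B/A)_P$ I read off $\mathrm{Supp}_A(B/T_P)\subseteq\{Q\in\mathrm{Spec}(A):Q\subseteq P\}$. Hence if there were a second $P'\in\mathrm{Supp}_A(B/A)$ with $P'\not\subseteq P$, then $(B/T_P)_{P'}=0$, so $(T_P)_{P'}=B_{P'}\neq A_{P'}$ and thus $T_P\neq A$; then $A\subsetneq T_P\subsetneq B$ would contradict minimality. Therefore any two members of $\mathrm{Supp}_A(B/A)$ are mutually contained, i.e. the support is a single prime $M$, and $M$ is maximal since a strictly larger maximal ideal would again lie in the support. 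This gives $A_P=B_P$ for all $P\neq M$ and identifies $M=\mathcal{C}(A,B)$.

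For the dichotomy, fix $b\in B\setminus A$; minimality yields $B=A[b]$. If $b$ is integral over $A$, then $B=A[b]$ is a finite $A$-module and $A\subset B$ is integral and finite. Suppose instead that $b$ is not integral over $A$; I claim $A\subset B$ is a flat epimorphism. Flatness and the epimorphism condition ($B\otimes_A B\xrightarrow{\sim}B$) are local on $\mathrm{Spec}(A)$ and hold trivially at every $P$ with $A_P=B_P$, so by the first paragraph it suffices to verify them at $M$; I therefore replace $A\subset B$ by $A_M\subset B_M$, which is again minimal because $[A,B]\to[A_M,B_M]$ is surjective and $M\in\mathrm{Supp}_A(B/A)$. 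Integrality of $b$ over $A$ is equivalent to integrality over $A_M$ (since $b\in A_P$ for $P\neq M$), so $b$ stays non-integral after this reduction.

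Now $(A,M)$ is local. The integral closure $\overline R$ of $A$ in $B$ lies in $[A,B]$, so minimality forces it to equal $A$ or $B$; the latter would make $B$ integral over $A$, contradicting non-integrality, so $A$ is integrally closed in $B$. The remaining and principal point is that a minimal extension with $A$ local and integrally closed in $B$ is a flat epimorphism. Concretely I would establish $MB=B$ (equivalently, no prime of $B$ lies over $M$); together with $B_P=A_P$ for $P\subsetneq M$ this identifies $\mathrm{Spec}(B)$ with $\mathrm{Spec}(A)\setminus\{M\}$ and yields isomorphisms $A_P\xrightarrow{\sim}B_Q$ on local rings together with flatness of $B$, which is exactly the flat-epimorphism package. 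I expect this to be the main obstacle: deducing $MB=B$ and the local isomorphisms from bare minimality, rather than from any finiteness, is where a finer analysis of the single generator $b$ (in the spirit of Ferrand--Olivier) is required.

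Finally the two cases are mutually exclusive. If $A\subset B$ were both integral (hence finite) and a flat epimorphism, I localize at $M$ to assume $A$ local; then $B$ is finite and flat, hence free of some rank $r\geq 1$, while $B\otimes_A B\cong B$ forces $r^2=r$. Thus $r=1$, so $1_B$ is a basis of $B$ and the unital map $A\to B$ is an isomorphism, contradicting $A\subsetneq B$. This completes the plan.
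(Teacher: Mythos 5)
This statement is not proved in the paper at all: it is imported verbatim from Ferrand--Olivier \cite{FO}, so your attempt has to stand on its own, and it has two genuine gaps. The first is in your construction of the crucial ideal. The step ``from the injection $B/T_P\hookrightarrow (B/A)_P$ I read off $\mathrm{Supp}_A(B/T_P)\subseteq\{Q\in\mathrm{Spec}(A):Q\subseteq P\}$'' is an invalid inference: the support of a localized module $(B/A)_P$, hence of a submodule of it, need not consist of generizations of $P$. Concretely, $\bigl((B/A)_P\bigr)_{P'}$ is the localization of $B/A$ at the multiplicative set $(A\setminus P)(A\setminus P')$, and nothing forces this to vanish when $P'\not\subseteq P$. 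For a counterexample to the principle you invoke, take $A=\mathbb{Z}$, $B=A[x]$, $P=(2)$: here $T_P=A$, the torsion-free module $B/T_P=B/A$ injects into $(B/A)_P$, yet $\mathrm{Supp}_A(B/A)=\mathrm{Spec}(A)$, and indeed $\bigl((B/A)_{(2)}\bigr)_{(3)}\neq 0$. This extension is of course not minimal, but your inference is presented as pure module theory, independent of minimality, and minimality cannot rescue it: since you have already shown $T_P\neq B$, minimality forces $T_P=A$ outright, so your entire route to a contradiction (producing $A\subset T_P\subset B$) hinges on exactly this false support inclusion. The vanishing you need is essentially equivalent to $B/A$ being a torsion module with conductor-type annihilator, which is part of what the theorem asserts. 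A correct argument must be case-dependent; e.g., in the integral case $B/A$ is finitely generated, so $\mathrm{Supp}(B/A)=V\bigl((A:B)\bigr)$, and for $M$ maximal in the support, minimality applied to the subring $A+MB$ together with Nakayama gives $MB\subseteq A$, whence $M\subseteq MB\subseteq(A:B)\subseteq M$ and $(A:B)=M$ is the unique support prime.

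The second gap you acknowledge yourself: in the non-integral branch you reduce correctly to $(A,M)$ local with $A$ integrally closed in $B$ (and that reduction, via surjectivity of $[A,B]\to[A_M,B_M]$ and the local nature of integrality, is fine granting Part 1), but then you only announce that you ``would establish $MB=B$'' and the local isomorphisms, calling it the main obstacle. That obstacle \emph{is} Th\'eor\`eme 2.2 of Ferrand--Olivier: proving that a minimal, integrally closed extension is a flat epimorphism is the entire analytic content of the dichotomy, and no part of your sketch supplies it. What you do prove correctly is the easy material: the integral branch ($B=A[b]$ with $b$ integral is finite), the mutual exclusivity (finite flat over a local ring is free, $B\otimes_AB\cong B$ forces rank $1$, and a rank-one free ring extension collapses to $A=B$), and the localization bookkeeping. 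So the proposal establishes the periphery of the theorem while leaving both the uniqueness of the crucial ideal and the flat-epimorphism branch unproven.
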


There are three types of minimal integral extensions, given by the following theorem.

\begin{theorem}\label{1.4} \cite [Theorem 2.2]{DPP2} Let $R\subset T$ be an extension and set $M:=(R: S)$. Then $R\subset T$ is minimal and finite if and only if $M\in\mathrm{Max}(R)$ and one of the following three conditions holds:

(a) {\bf inert case}: $M\in\mathrm{Max}(T)$ and $R/M\to T/M$ is a minimal field extension;

(b) {\bf decomposed case}: There exist $M_1,M_2\in\mathrm{Max}(T)$ such that $M= M _1\cap M_2$ and the natural maps $R/M\to T/M_1$ and $R/M\to T/M_2$ are both isomorphisms;

(c) {\bf ramified case}: There exists $M'\in\mathrm{Max}(T)$ such that ${M'}^ 2 \subseteq M\subset M',\  [T/M:R/M]=2$, and the natural map $R/M\to T/M'$ is an isomorphism.

Decomposed and ramified minimal extensions are infra-integral while inert minimal extensions are not.  Ramified minimal extensions are subintegral.
\end{theorem}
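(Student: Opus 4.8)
The plan is to prove both implications by reducing to the local ring at the crucial maximal ideal and then to a purely field-theoretic classification. (Here I read $M=(R:T)$, the conductor of the given extension $R\subset T$.)

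For the forward implication, assume $R\subset T$ is minimal and finite. By Theorem~\ref{1.2} there is a crucial maximal ideal $\mathfrak M:=\mathcal C(R,T)$ with $R_P=T_P$ for all $P\neq\mathfrak M$, so I would localize at $\mathfrak M$ and set $A:=R_{\mathfrak M}$, $B:=T_{\mathfrak M}$, with $A$ local of maximal ideal $\mathfrak m$ and residue field $k:=A/\mathfrak m$; then $A\subset B$ is again minimal and finite. The key first step is a Nakayama argument: $A+\mathfrak mB$ is an $A$-subalgebra of $B$, so by minimality it equals $A$ or $B$, and $A+\mathfrak mB=B$ would force $B=A$ by Nakayama (as $B$ is a finite $A$-module), which is absurd. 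Hence $\mathfrak mB\subseteq A$, so $\mathfrak m$ is an ideal of $B$ and one checks $(A:B)=\mathfrak m$. Since every intermediate ring contains the conductor, the assignment $R'\mapsto R'/\mathfrak m$ gives an order-isomorphism $[A,B]\cong[k,\bar B]$ with $\bar B:=B/\mathfrak m$; thus $k\subset\bar B$ is a minimal extension of the field $k$ by the finite-dimensional $k$-algebra $\bar B$.

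The heart of the matter is then to classify such minimal extensions. Because $k[b]$ is an intermediate algebra strictly containing $k$ for every $b\in\bar B\setminus k$, minimality forces $\bar B=k[b]$ to be monogenic, so $\bar B\cong k[X]/(f)$ with $f$ the minimal polynomial of $b$. If $\bar B$ is a field I land in the inert case (a). Otherwise $f$ is reducible or has a repeated factor, and I would produce a proper intermediate subalgebra to force $\dim_k\bar B=2$: if $f$ has two coprime factors, the idempotent of the resulting product decomposition spans a copy of $k\times k$, whence $\bar B\cong k\times k$ (the decomposed case (b)); if $f=p^{e}$ is a prime power, a suitable square-zero element (namely $\overline{p(X)}$ when $\deg p\geq2$, or $\bar X^{\,e-1}$ when $\deg p=1$) generates a proper subalgebra as soon as $\dim_k\bar B>2$, leaving only $\bar B\cong k[X]/(X^{2})$, two-dimensional local with residue field $k$ and square-zero maximal ideal (the ramified case (c)). Un-localizing, the conductor computation $(R:T)_{\mathfrak M}=\mathfrak m$ together with $(R:T)_P=R_P$ for $P\neq\mathfrak M$ shows that $R/M$ is a field, so $M\in\mathrm{Max}(R)$, and the three structural pictures for $\bar B$ translate into the conditions (a), (b), (c) on the maximal ideals of $T$ lying over $M$.

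For the converse I would run the same correspondence in reverse: assuming $M=(R:T)\in\mathrm{Max}(R)$ is an ideal of $T$ with $MT=M$, the hypothesis that $T/M$ is finite over $k=R/M$ (of dimension equal to the degree in (a) and equal to $2$ in (b) and (c)) shows that $T$ is a finite $R$-module, and the order-isomorphism $[R,T]\cong[k,T/M]$ reduces minimality of $R\subset T$ to that of $k\subset T/M$, which holds in each case since $T/M$ is then either a minimal field extension or two-dimensional over $k$. Finally, the stated infra-integrality claims read off the fibre structure: the residual extensions $R/M\to T/M_i$ (resp.\ $R/M\to T/M'$) are isomorphisms in the decomposed and ramified cases, so these are infra-integral, whereas in the inert case $R/M\to T/M$ is a nontrivial field extension; and since the ramified case has a single prime of $T$ over $M$, the spectral map is bijective there, giving subintegrality. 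The main obstacle I anticipate is the classification step — producing the right intermediate subalgebra to rule out every $k$-algebra of $k$-dimension greater than two — together with the more technical point of justifying that the conductor localizes well enough to conclude globally that $M$ is maximal rather than merely $\mathfrak M$-primary.
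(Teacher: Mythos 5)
The paper never proves this theorem: it is imported verbatim as background from \cite[Theorem 2.2]{DPP2}, and the classification itself goes back to Ferrand--Olivier \cite{FO}, so there is no internal proof to compare yours against. Judged on its own merits, your reconstruction is correct (and your reading of the misprint $(R:S)$ as the conductor $(R:T)$ is of course the intended one); it is essentially the standard argument: localize at the crucial ideal of Theorem~\ref{1.2}; use Nakayama to get $\mathfrak{m}B=\mathfrak{m}$ and hence $(A:B)=\mathfrak{m}$; pass through the order-isomorphism $[A,B]\cong[k,B/\mathfrak{m}]$; classify the minimal extensions $k\subset \overline{B}$ inside finite-dimensional $k$-algebras (monogenicity forced by minimality, the idempotent/CRT argument when the minimal polynomial has coprime factors, the nilpotent argument in the prime-power case); and, for the converse, observe that any two-dimensional $k$-algebra is automatically a minimal extension of $k$, so (b) and (c) give minimality for free while (a) is minimal by hypothesis. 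Three points you should tighten, none of which is a genuine gap: (i) $\overline{p(X)}$ is \emph{not} square-zero when $f=p^{e}$ with $e>2$; what actually rules out this case is the dimension count $\dim_k k[\,\overline{p}\,]\leq e< e\deg p=\dim_k\overline{B}$, which still exhibits a proper intermediate subalgebra and so still contradicts minimality; (ii) the assertion that localization at the crucial ideal preserves minimality needs its one-line justification: any $C\in[A,B]$ contracts to $R':=\{t\in T\mid t/1\in C\}\in[R,T]$ satisfying $R'_{\mathfrak{M}}=C$, because every element of $T_{\mathfrak{M}}$ has the form $t/s$ with $s\in R\setminus\mathfrak{M}$; (iii) in case (a) of the converse you must know that a minimal field extension is finite (if $x\in L\setminus K$ were transcendental, $K[x^{2}]$ would be a proper intermediate ring) before concluding that $T$ is a finite $R$-module. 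With these patches everything is complete, including the closing assertions on infra-integrality and subintegrality, which---as you say---are decided by the fibre over $M$ because $R_P=T_P$ for every prime $P\neq M$.
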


The next lemma will be used later. Let $\mathcal{P}$ be a property holding for a class $\mathcal C$ of ring extensions, stable under subextensions (i.e. $R \subseteq S$ in $\mathcal C$ and $[U,V] \subseteq [R,S]$ imply $U \subseteq V$ in $\mathcal C)$. We say that $\mathcal P$ admits a closure in $\mathcal C$ if the following conditions (i), (ii), (iii) and (iv) hold for any extension $R\subset S$ in $\mathcal C$:

(i) For any tower of extensions $R\subseteq U\subseteq S$, then $R\subseteq S$ has $\mathcal{P}$ if and only if $R\subseteq U$ and $U\subseteq S$ have $\mathcal{P}$.

(ii) There exists a largest subextension $T\in[R,S]$ such that $R\subseteq T$ has $\mathcal{P}$. 

(iii) No subextension $U\subseteq V$ of $T\subseteq S$ has   $\mathcal{P}$. 

(iv) $T=R$ when $R\subset S$ is a composite of finitely many minimal extensions which do not satisfy $\mathcal{P}$ .

Such a $T$ is unique, is  called the $\mathcal{P}$-closure of $R$ in $S$ and is  denoted  by $R^{\mathcal{P}}$. Some instances are  the separable closure in the class of  algebraic field extensions and  the t-closure in the class of integral ring extensions.

\begin{lemma} \label{1.5} Let $\mathcal{P}$ be a property of ring extensions admitting a $\mathcal P$-closure  in a class $\mathcal C$ of ring extensions. If an FCP extension $R\subseteq S$ belongs to  $\mathcal C $ and $R^{\mathcal{P}}$ is its $\mathcal{P}$-closure,  then, $\ell[R,S]=\ell[R,R^{\mathcal{P}}]+\ell[R^{\mathcal{P}},S]$. 
\end{lemma}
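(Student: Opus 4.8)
The plan is to prove the two inequalities separately, with $\ell[R,S]\ge\ell[R,R^{\mathcal P}]+\ell[R^{\mathcal P},S]$ being the easy half. Write $T:=R^{\mathcal P}$. Since $\mathcal C$ is stable under subextensions, both $R\subseteq T$ and $T\subseteq S$ lie in $\mathcal C$ and inherit FCP, so each admits a maximal chain realizing its length; juxtaposing two such chains produces a chain of $[R,S]$ of length $\ell[R,T]+\ell[T,S]$, whence $\ell[R,S]\ge\ell[R,T]+\ell[T,S]$. This half uses neither $\mathcal P$ nor conditions (i)--(iv). Everything therefore reduces to the reverse inequality $\ell[R,S]\le\ell[R,T]+\ell[T,S]$.

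For the reverse inequality I would argue by induction on the (finite) number $\ell[R,S]$. If $T=R$ or $T=S$ the statement is trivial, so assume $R\subsetneq T\subsetneq S$. Using that $R\subseteq S$ has FCP, choose a maximal chain of $[R,S]$ of length $\ell[R,S]$ and let $R\subset R_1$ be its first (necessarily minimal) step, so that $\ell[R_1,S]=\ell[R,S]-1$. Put $T_1:=(R_1)^{\mathcal P}$, the $\mathcal P$-closure of $R_1$ in $S$ (it exists because $R_1\subseteq S$ is again an FCP extension in $\mathcal C$), and apply the induction hypothesis to $R_1\subseteq S$ to get $\ell[R_1,S]=\ell[R_1,T_1]+\ell[T_1,S]$. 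It then remains to bound the right-hand side by $\ell[R,T]+\ell[T,S]-1$.

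The analysis splits according to whether the first step carries $\mathcal P$. In \emph{Case 1}, where $R\subset R_1$ has $\mathcal P$, the largeness in (ii) forces $R_1\subseteq T$, and two applications of the tower condition (i) give $T_1=T$; then $\ell[R,S]=1+\ell[R_1,T]+\ell[T,S]\le\ell[R,T]+\ell[T,S]$, because $R\subset R_1\subseteq T$ yields $\ell[R,T]\ge 1+\ell[R_1,T]$. In \emph{Case 2}, where $R\subset R_1$ does not have $\mathcal P$, condition (i) shows $R_1\not\subseteq T$, hence $R_1\cap T=R$ by minimality. The strategy here is to transport the minimal step across $T$: I would show that $T\subseteq T_1$ with $T\neq T_1$ (so $\ell[T,T_1]\ge 1$) and that $\ell[R_1,T_1]\le\ell[R,T]$. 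Granting these, the superadditivity $\ell[T,S]\ge\ell[T,T_1]+\ell[T_1,S]$ gives $\ell[R,S]=1+\ell[R_1,T_1]+\ell[T_1,S]\le 1+\ell[R,T]+\ell[T,S]-\ell[T,T_1]\le\ell[R,T]+\ell[T,S]$, completing the induction.

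The main obstacle is Case 2, precisely the interaction of the $\mathcal P$-closure with the base change induced by a single minimal step. The two inequalities $T\subseteq T_1$ and $\ell[R_1,T_1]\le\ell[R,T]$ both come down to the identity $T_1=R_1\cdot T$ together with the facts that $R_1\subseteq R_1T$ again has $\mathcal P$ and that base change by $R_1$ does not lengthen chains; the strict inclusion $T\subsetneq T_1$ then follows at once from $R_1\subseteq T_1$ and $R_1\not\subseteq T$. To establish this compatibility I would localize at the crucial maximal ideal $\mathcal C(R,R_1)$ supplied by Theorem~\ref{1.2}: away from it $R$ and $R_1$ agree, so the question becomes a local, one-maximal-ideal computation in which the minimal step and the closure can be compared directly, with (iii) guaranteeing that the transported step lands in the $\mathcal P$-free region $[T,S]$ and (iv) ruling out the degenerate collapse $T_1=T$. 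Verifying that the $\mathcal P$-closure commutes with this base change is the one genuinely technical point on which the whole length computation rests.
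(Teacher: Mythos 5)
Your easy inequality and your \emph{Case 1} are correct, and your overall skeleton (induction on $\ell[R,S]$, case split on whether the first minimal step $R\subset R_1$ has $\mathcal P$) matches the paper's. The problem is \emph{Case 2}, which you yourself flag as ``the one genuinely technical point'' and then do not prove. The claims you need there --- $T_1=R_1\cdot T$, that $R_1\subseteq R_1T$ again has $\mathcal P$, that $T\subsetneq T_1$, and that $\ell[R_1,T_1]\le\ell[R,T]$ --- are statements about how $\mathcal P$ interacts with composita and base change, and no such information is available: $\mathcal P$ is an \emph{abstract} property, and the only hypotheses are (i)--(iv) together with FCP. The axioms say nothing about the compositum $R_1\cdot T$, nothing about localization, and nothing about $\mathcal P$ being a local property; so your proposed repair (localize at the crucial maximal ideal $\mathcal{C}(R,R_1)$ and compare there) has no foothold --- there is no axiom that lets you test $\mathcal P$ or compute a $\mathcal P$-closure locally. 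These base-change facts do hold for the paper's motivating instances (separable closure of field extensions, t-closure of integral extensions), but verifying them there uses residue-field and separability arguments specific to those settings, not (i)--(iv); as a proof of the abstract lemma, Case 2 is a genuine gap. A secondary gap sits inside it: even granting $T_1=R_1T$, the inequality $\ell[R_1,T_1]\le\ell[R,T]$ needs an argument, since a chain in $[R_1,R_1T]$ does not pull back to a chain in $[R,T]$, and exhibiting one short chain from $R_1$ to $R_1T$ does not bound the supremum $\ell[R_1,T_1]$.

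The paper closes exactly this case without any base-change input, and the device is worth recording: it strengthens the induction statement to ``there exists a maximal chain of length $\ell[R,S]$ passing through $R^{\mathcal P}$''. When $R\subset R_1$ fails $\mathcal P$, it takes the inductive chain through $T_1=R_1^{\mathcal P}$, looks at its second term $R'_2$ (so that $R_1\subset R'_2$ has $\mathcal P$ when $R_1\neq T_1$), and replaces your $R_1$ by $R':=$ the $\mathcal P$-closure of $R$ in $R'_2$: a length count shows $R\subset R'$ is minimal and has $\mathcal P$, condition (iii) rules out the degenerate possibility $R'=R$, condition (i) gives $R'^{\mathcal P}=R^{\mathcal P}$, and induction applied to $R'\subseteq S$ produces the desired chain; the remaining possibility $R_1=T_1$ is eliminated by (iv). In short: instead of transporting the closure across the bad minimal step (which requires axioms you do not have), exchange the bad minimal step for a good one having the same closure.
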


\begin{proof}  Obviously, $\ell[R,S]\geq\ell[R,R^{\mathcal{P}}]+\ell[R^{\mathcal{P}},S]$. We prove by induction on $n:=\ell[R,S]\geq 1$ that there exists a maximal chain from $R$ to $S$ with length $n$ containing $R^{\mathcal{P}}$. If $n=1$, then $R\subset S$ is a minimal extension, so that either $R^ {\mathcal{P}}=R$, or $R^{\mathcal{P}}=S$. Assume now that $n>1$ and that the induction hypothesis holds for any $n'<n$. We may assume that $R\neq R^{\mathcal{P}}$. Let $R=R_0\subset R_1\subset\cdots\subset R_{n-1}\subset R_n=S$ be a maximal chain of subextensions with length $n$.  The induction hypothesis applied to the extension $R_1\subseteq S$ (with length $n-1$) gives that there exists a maximal chain $R_1=R'_1\subset\cdots\subset R'_{n-1}\subset R'_n=S$ with length $n-1$ containing $R_1^{\mathcal{P}}$, so that $R_1\subseteq R_1^{\mathcal{P}}$ satisfies $\mathcal{P}$.

If $R\subset R_1$ satisfies $\mathcal{P}$, then, $R\subset R_1\subseteq R_1^{\mathcal{P}}$ satisfies $\mathcal{P}$ by (i), so that $R_1^{\mathcal{P}}=R^{\mathcal{P}}$. It follows that we get a maximal chain from $R$ to $S$ with length $n$ containing $R^{\mathcal{P}}$. 

Assume that $R\subset R_1$ does not satisfy $\mathcal{P}$. If $R_1\neq R_1^{\mathcal{P}}$, then $R_1 \subset R'_2$ satisfies $\mathcal{P}$ because $R'_2\subseteq R_1^{\mathcal{P}}$. Let $R'$ be the $\mathcal{P}$-closure of the extension $R\subset R'_2$. We have $R'\neq R'_2$ because $R\subset R'_2$ does not satisfy $\mathcal{P}$ by (i). Assume that $R\neq R'$. Because of the length of $R'_2 \subset\cdots\subset R'_{n-1}\subset R'_n=S$, we get that $R\subset R' $ is minimal and satisfies $\mathcal{P}$. For the same reason, $R'\subset R_2' $ is minimal. Let $R''$ be the $\mathcal{P}$-closure of the extension $R'\subset S$ (with length $n-1$). We have $R''=R^{\mathcal{P}}$. The induction hypothesis gives that there exists a maximal chain from $R'$ to $S$ with length $n-1$ containing $R''$, so that there exists a maximal chain from $R$ to $S$ with length $n$ containing $R^{\mathcal{P}}$. Now, assume that $R=R'$. By (iii), no subextension of $R\subset R'_2$ satisfies $\mathcal{P}$, a contradiction, since $R_1\subset R'_2$ satisfies $\mathcal{P}$. 

At last, assume that $R_1=R_1^{\mathcal{P}}$, then, $R=R^{\mathcal{P}}$ by (iv). Indeed, $R\subset S$ is composed of minimal subextensions, each of them not satisfying $\mathcal{P}$. 

To end, $\ell[R,S]=\ell[R,R^{\mathcal{P}}]+\ell[R^{\mathcal{P}},S]$. 
\end{proof}

We recover in particular that $\ell [R,S] = \ell[R,\overline R] + \ell[\overline R,S]$ \cite[Theorem 4.11]{DPP3}.

\begin{remark}\label{1.6} For the reverse order, there is some  companion result that can be  written if  after all  it  reveals useful. 
\end{remark}

We end by recalling some useful characterizations of the support of an   FCP extension.

\begin{lemma} \label{1.8} \cite[Remark 6.14 (b), Theorem 6.3]{DPP2} Let $R\subseteq S$ be an integrally closed  FMC extension. Then,  $\mathrm{Supp}(S/R)=\{P\in\mathrm{Spec}(R)\mid PS=S\}$.
\end{lemma}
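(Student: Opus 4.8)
The plan is to prove the two inclusions separately, after reformulating the support as $\mathrm{Supp}(S/R)=\{P\in\mathrm{Spec}(R)\mid R_P\neq S_P\}$ (using $(S/R)_P=S_P/R_P$). For the inclusion $\{P\mid PS=S\}\subseteq\mathrm{Supp}(S/R)$, which needs no hypothesis, I would argue by contradiction: if $PS=S$ but $R_P=S_P$, then localizing $PS=S$ at $P$ gives $PS_P=S_P$, whence $S_P=PS_P=PR_P$, a proper ideal of $R_P=S_P$, which is absurd. Hence every such $P$ lies in the support.

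For the reverse inclusion I would first exploit the hypotheses to pin down the structure of $R\subseteq S$. Since the extension is integrally closed, $\overline R=R$, so $\mathrm L_R(\overline R/R)=0$ is finite; by Proposition~\ref{1.0} the FMC extension $R\subseteq S$ therefore has FCP. Consequently $R\subseteq S$ is a composite of finitely many minimal extensions, each of which is, by Theorem~\ref{1.2}, either integral or a flat epimorphism. An integral minimal step is impossible: localizing at its crucial maximal ideal, it would exhibit an element of $S$ integral over $R$ yet outside $R$, contradicting $\overline R=R$ (integral closure commutes with localization). Thus every minimal step is a flat epimorphism, and since a composite of flat epimorphisms is again a flat epimorphism, $R\to S$ is itself a flat epimorphism.

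Now fix $P\in\mathrm{Supp}(S/R)$, i.e. $R_P\neq S_P$. Localizing at $P$ reduces to the case of a local base $(R_P,PR_P)$ with the same properties; the first minimal step $R_P\subset T_1$ is a flat epimorphism whose crucial ideal, being maximal by Theorem~\ref{1.2}, must equal $PR_P$. For a minimal flat epimorphism the crucial ideal expands to the unit ideal, so $(PR_P)T_1=T_1$ and a fortiori $PS_P=S_P$, that is $(S/PS)_P=0$. To globalize, I would use that $R\to S$ is a flat epimorphism: base change makes $S/PS$ a flat, hence torsion-free, module over the domain $R/P$, and $(S/PS)_P=(S/PS)\otimes_{R/P}\mathrm{Frac}(R/P)=0$ forces $S/PS=0$, i.e. $PS=S$. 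This yields $\mathrm{Supp}(S/R)\subseteq\{P\mid PS=S\}$ and completes the equality.

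The main obstacle is the transition from the local identity $PS_P=S_P$ to the global one $PS=S$: because FCP does not make $S$ a finite $R$-module, Nakayama is unavailable and one cannot argue prime-by-prime without first controlling which primes lie in the support. The device that resolves this is the observation, gathered in the previous step, that the whole extension is a flat epimorphism, so that $S/PS$ is torsion-free over $R/P$; this is what replaces finiteness and makes the single vanishing $(S/PS)_P=0$ propagate to $S/PS=0$. A secondary point to handle with care is the justification that a minimal flat epimorphism expands its crucial maximal ideal to the unit ideal, which is precisely where Theorem~\ref{1.2} (the flat-versus-integral dichotomy together with the maximality of the crucial ideal) is essential.
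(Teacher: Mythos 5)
Most of your proof is sound: the easy inclusion $\{P\mid PS=S\}\subseteq\mathrm{Supp}(S/R)$, the reduction from FMC to FCP via Proposition~\ref{1.0}, the identification of the crucial ideal of the first localized step with $PR_P$, and the final torsion-freeness argument over the domain $R/P$ all work. The genuine gap is the pivotal structural claim that \emph{every} minimal step of a maximal chain is a flat epimorphism. Your justification (``it would exhibit an element of $S$ integral over $R$ yet outside $R$'') only rules out an integral step at the bottom of the chain: if $R_i\subset R_{i+1}$ is a minimal integral step with $i\geq 1$, its elements are integral over $R_i$, not over $R$, so there is no contradiction with $\overline R=R$, and localizing at the crucial ideal does not change this. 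What you actually need is that each intermediate ring is again integrally closed in $S$ (equivalently, that $R\subseteq S$ is a Pr\"ufer extension, i.e.\ $(R,S)$ is a normal pair). That is a real theorem, not a formal consequence of $\overline R=R$: for general integrally closed extensions it is false. For instance $R=k[x,y]$ is integrally closed in $S=k(x,y)$, yet the intermediate ring $T=k[x,y,x^2/y^2]$ is not, since $x/y$ is integral over $T$ but does not lie in $T$; so towers over an integrally closed base can contain integral steps, and only the FCP hypothesis prevents this here.

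This missing fact is precisely what the paper's citation covers: \cite[Theorem 6.3]{DPP2} shows that an integrally closed FCP extension is Pr\"ufer, and \cite[Remark 6.14(b)]{DPP2} then records the description of the support; the paper offers no independent proof. If you want a self-contained substitute, the route the paper itself takes in the proof of Proposition~\ref{3.5} works: every $T\in[R,S]$ is of finite type over $R$ and $R\subseteq T$ satisfies INC (being a composite of integral morphisms and flat epimorphisms), hence $R\subseteq S$ is quasi-finite, and the Zariski Main Theorem applied to this integrally closed quasi-finite extension shows it is a flat epimorphism; your localization and torsion-freeness arguments then finish the proof. Note also a smaller point: the expansion $MB=B$ for a minimal flat epimorphism $A\subset B$ with crucial ideal $M$ is not contained in the statement of Theorem~\ref{1.2}; it is \cite[Lemme 3.2]{FO}, or can be derived from the fact that $(B/A)\otimes_A B=0$ for any injective flat epimorphism, which forces $\mathrm{Supp}_A(B/A)$ to be disjoint from the image of $\mathrm{Spec}(B)\to\mathrm{Spec}(A)$, so no prime of $B$ lies over $M$.
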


\begin{lemma}\label{1.9}Ê\cite[Corollary 3.2]{DPP2} Suppose that there is a maximal chain $R=R_0 \subset\cdots\subset R_i \subset\cdots \subset R_n=S$ of extensions, where $R_i\subset R_{i+1}$ is minimal with crucial ideal $M_i$ for each $i = 0,\ldots , n-1$ ({\it i.e.} $R \subseteq S$ has FMC). Then $\mathrm {Supp}(S/R)$ is a finite set; in fact, $\mathrm {Supp}(S/R)=\{M_i\cap R\mid i=0,\ldots,n-1\}$.
\end{lemma}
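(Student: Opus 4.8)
The plan is to prove the two inclusions separately, working one prime at a time via localization, rather than through a filtration argument (which, as I explain below, runs into trouble). Recall that $P\in\mathrm{Supp}(S/R)$ if and only if $R_P\neq S_P$. Localizing the given chain at $P$ produces $R_P=(R_0)_P\subseteq\cdots\subseteq(R_n)_P=S_P$, and by Theorem~\ref{1.2} the step $(R_i)_P\subseteq(R_{i+1})_P$ is proper precisely when the crucial ideal $M_i$ survives the localization, i.e.\ when $M_i\cap R\subseteq P$ (equivalently $M_i\cap(R\setminus P)=\emptyset$). This at once yields the easy inclusion: writing $\mathfrak p_i:=M_i\cap R$, the $i$-th step stays proper after localizing at $\mathfrak p_i$, since $M_i\cap R=\mathfrak p_i\subseteq\mathfrak p_i$; hence $R_{\mathfrak p_i}\neq S_{\mathfrak p_i}$ and $\mathfrak p_i\in\mathrm{Supp}(S/R)$. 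Thus $\{M_i\cap R\mid i=0,\ldots,n-1\}\subseteq\mathrm{Supp}(S/R)$.

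For the reverse inclusion I would take $P\in\mathrm{Supp}(S/R)$ and again localize the chain at $P$. Since $R_P\neq S_P$, not every localized step is an equality, so there is a first index $j$ with $(R_j)_P\subsetneq(R_{j+1})_P$. By minimality of $j$ all the earlier localized rings collapse, so $(R_j)_P=R_P$, which is local with unique maximal ideal $PR_P$. As localizations of minimal extensions are again minimal \cite{FO}, the step $(R_j)_P\subset(R_{j+1})_P$ is a minimal extension whose crucial ideal, by Theorem~\ref{1.2}, lies in $\mathrm{Max}((R_j)_P)=\mathrm{Max}(R_P)=\{PR_P\}$; hence this crucial ideal equals $PR_P$. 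On the other hand it is the extension $M_j(R_j)_P$ of $M_j$, so contracting it to $R$ along the two routes $R\to R_P$ and $R\to R_j\to(R_j)_P$ gives $PR_P\cap R=P$ on one side and $M_j\cap R$ on the other. Therefore $P=M_j\cap R$, proving $\mathrm{Supp}(S/R)\subseteq\{M_i\cap R\mid i\}$; combined with the first inclusion this is the desired equality, and finiteness is then immediate since there are at most $n$ such ideals.

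The main obstacle is exactly this reverse inclusion, and the subtlety to respect is that $M_i\cap R$ need \emph{not} be maximal in $R$. Consequently one cannot argue by specialization-closure: the tempting approach of filtering $S/R$ by the chain and computing $\mathrm{Supp}_R(R_{i+1}/R_i)$ fails to give the result directly, because that support spreads over all of $V(M_i\cap R)$, which is strictly larger than the single point $\{M_i\cap R\}$ when $M_i\cap R$ is not maximal. The localization argument above is precisely what pins an arbitrary support prime $P$ down to be itself a crucial contraction, by exploiting that after localizing at $P$ the base of the first nontrivial step is the local ring $R_P$, so its crucial ideal is forced to be the maximal ideal $PR_P$. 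The routine inputs I would still verify are the stability of minimality and of the crucial ideal under localization (both from \cite{FO}) and the compatibility of ideal contraction with localization used in the final matching of $PR_P$ with $M_j\cap R$.
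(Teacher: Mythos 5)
Your proof is correct, but note the caveat on comparison: the paper does not prove this lemma at all---it is imported verbatim, with citation, from \cite[Corollary 3.2]{DPP2}---so your argument can only be judged on its own merits, and it holds up. The forward inclusion is as routine as you say. The reverse inclusion is the real content, and your mechanism is the right one: for $P\in\mathrm{Supp}(S/R)$, the first properly localized step $(R_j)_P\subset(R_{j+1})_P$ sits over the local ring $(R_j)_P=R_P$, so Theorem~\ref{1.2} forces the crucial ideal of that minimal step to be $PR_P$; identifying this crucial ideal with $M_j(R_j)_P$ and contracting both descriptions to $R$ gives $P=M_j\cap R$. Two points to tighten. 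First, the localization fact you defer to \cite{FO} is needed in slightly more generality than localization at a prime of $R_j$: here $\Sigma:=R\setminus P$ is a multiplicative subset of $R_j$ that is generally not of the form $R_j\setminus Q$. The fact is still true, with a one-line proof: $T\mapsto T_\Sigma$ maps $[R_j,R_{j+1}]=\{R_j,R_{j+1}\}$ onto $[(R_j)_\Sigma,(R_{j+1})_\Sigma]$ (any $C$ in the latter equals $T_\Sigma$ for $T$ the preimage of $C$ in $R_{j+1}$), so the localized extension is minimal once it is proper, and its crucial ideal must be $M_j(R_j)_\Sigma$ because, by Theorem~\ref{1.2}, that is the only prime of $(R_j)_\Sigma$ at which the two rings differ locally; this is the same surjectivity $[R,S]\to[R_P,S_P]$ that the paper invokes in its Introduction. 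Second, contracting $M_j(R_j)_P$ back to $M_j$ uses $M_j\cap\Sigma=\emptyset$, which is exactly the properness criterion of your first paragraph and is worth stating explicitly. Finally, your closing diagnosis is accurate and is the genuine subtlety here: since $\mathrm{Supp}_R(R_{i+1}/R_i)=V(M_i\cap R)$, the filtration only yields $\mathrm{Supp}(S/R)=\bigcup_i V(M_i\cap R)$, and the hidden content of the lemma is that FMC forces this union to collapse onto the finite set of contractions (equivalently, that $\{M_i\cap R\}_i$ is stable under specialization); your localization argument is precisely what establishes that collapse.
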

\section {Preliminary results about FCP field extensions}

We  first observe that an FCP field extension $K \subset L$ is finite, whence algebraic. It follows that $L(X) \simeq L\otimes_K K(X)$ by \cite[Lemma 3.1]{DPP3}, so that $[L:K] = [L(X):K(X)]$. Moreover, a minimal field extension is clearly either separable, or purely inseparable (see for instance \cite{Ph}) and the degree of a minimal purely inseparable extension of a field $K$ is equal to the characteristic of $K$.

\begin{proposition} \label{2.1} Let $K\subset L$ be an FCP field extension and let $K_s$ be the separable closure of $K$ in $L$. Then,  $\ell[K,L]=\ell[K,K_s]+\ell[K_s,L]$. 
\end{proposition}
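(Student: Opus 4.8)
The plan is to obtain this as a direct application of Lemma~\ref{1.5}. I would take $\mathcal{C}$ to be the class of algebraic field extensions and let $\mathcal{P}$ be the property of being a separable extension, so that the $\mathcal{P}$-closure of $K$ in $L$ is exactly the separable closure $K_s$. As recalled at the beginning of this section, an FCP field extension is finite and hence algebraic, so $K\subset L$ lies in $\mathcal{C}$; moreover $\mathcal{C}$ is plainly stable under subextensions. Once separability is shown to admit a $\mathcal{P}$-closure in $\mathcal{C}$, that is, once conditions (i)--(iv) preceding Lemma~\ref{1.5} are verified, the equality $\ell[K,L]=\ell[K,K_s]+\ell[K_s,L]$ is immediate.

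The substance of the argument is therefore the verification of (i)--(iv). Condition (i) is precisely the tower property for separable algebraic extensions: given $K\subseteq U\subseteq L$, the extension $K\subseteq L$ is separable if and only if $K\subseteq U$ and $U\subseteq L$ are both separable, one direction being that subextensions of a separable extension are separable, the other being transitivity of separability. Condition (ii) holds tautologically, since $K_s$ is by definition the largest intermediate field separable over $K$.

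For (iii) and (iv) I would use the classical fact that $K_s\subseteq L$ is purely inseparable, combined with the observation recorded just above the proposition that any minimal field extension is either separable or purely inseparable. For (iii), every subextension $U\subseteq V$ with $K_s\subseteq U\subseteq V\subseteq L$ is purely inseparable, and a purely inseparable extension that is also separable must be trivial, so no proper $U\subset V$ satisfies $\mathcal{P}$. For (iv), if $K\subset L$ decomposes as a composite of finitely many minimal extensions none of which is separable, then each factor is purely inseparable, whence $L/K$ is purely inseparable and $K_s=K$. The one point requiring care, rather than any computation, is ensuring that these standard facts — transitivity of separability and the pure inseparability of $L$ over its separable closure — are matched exactly to the abstract axioms (i)--(iv); with that done, Lemma~\ref{1.5} yields the claim.
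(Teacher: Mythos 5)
Your proposal is correct and follows exactly the paper's route: the paper's proof is a one-line application of Lemma~\ref{1.5} with $\mathcal{P}$ taken to be separability and $K^{\mathcal{P}}=K_s$, citing Bourbaki for the underlying field-theoretic facts. Your explicit verification of axioms (i)--(iv), using the tower property of separability, the pure inseparability of $K_s\subseteq L$, and the dichotomy for minimal field extensions, is precisely the content the paper leaves to the reference.
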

\begin{proof} We use Lemma~\ref{1.5}, where $\mathcal{P}$ is the property to be a separable extension and $K^{\mathcal{P}}=K_s$ is the separable closure (\cite[Ch. V, Proposition 13, p. 42]{Bki A}). 
\end{proof}

So, it is enough to consider the situation for FCP separable extensions and FCP purely inseparable extensions.

\begin{proposition} \label{2.2} Let $K\subset L$ be an FCP separable field extension. Then, $\ell[K,L]=\ell[K(X),L(X)]$. 
\end{proposition}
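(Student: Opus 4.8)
The plan is to show that extension of scalars $T\mapsto T(X):=T\otimes_K K(X)$ defines a lattice isomorphism from $[K,L]$ onto $[K(X),L(X)]$, from which the equality of lengths follows at once: a lattice isomorphism carries a chain of length $n$ to a chain of length $n$, and conversely. Recall from the opening of this section that an FCP field extension $K\subset L$ is finite and separable, and that $L(X)\simeq L\otimes_K K(X)$; I shall use the analogous isomorphism $T(X)\simeq T\otimes_K K(X)$ for every $T\in[K,L]$.

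First I would check that the map is well defined, order preserving and injective. For $T\in[K,L]$ the extension $K\subseteq T$ is separable, so $T\otimes_K K(X)$ is a field and is naturally a $K(X)$-subalgebra of $L\otimes_K K(X)\simeq L(X)$, identified with $T(X)$; thus $T(X)\in[K(X),L(X)]$ and $[T(X):K(X)]=[T:K]$. The assignment clearly respects inclusions, and it is injective because $T$ is recovered as $T=T(X)\cap L$.

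The heart of the argument, and the step I expect to be the main obstacle, is surjectivity, that is, the fact that every intermediate field $M$ with $K(X)\subseteq M\subseteq L(X)$ descends to $K$. Here I would invoke Galois theory, which is available precisely because $L/K$ is separable. Let $N$ be the Galois closure of $L/K$, with $G:=\mathrm{Gal}(N/K)$ and $H:=\mathrm{Gal}(N/L)$. Since $N/K$ is Galois and $K(X)/K$ is purely transcendental, $N(X)\simeq N\otimes_K K(X)$ is a Galois extension of $K(X)$ and the restriction map $\mathrm{Gal}(N(X)/K(X))\to G$, $\sigma\mapsto\sigma|_N$, is an isomorphism (each $K$-automorphism of $N$ extends uniquely to a $K(X)$-automorphism of $N(X)$ fixing $X$). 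As $N$ is generated over $K$ by the conjugates of $L$, the field $N(X)$ is the Galois closure of $L(X)/K(X)$, and under the above isomorphism the subgroup $\mathrm{Gal}(N(X)/L(X))$ corresponds to $H$. Consequently the two Galois correspondences exhibit both $[K,L]$ and $[K(X),L(X)]$ as order anti-isomorphic to the same subgroup interval $[H,G]$, compatibly with $T\mapsto T(X)$; in particular $T\mapsto T(X)$ is bijective, and $M=(M\cap L)(X)$ for every $M\in[K(X),L(X)]$.

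Finally, since $T\mapsto T(X)$ is an isomorphism of the two (finite, by FCP) intermediate-field lattices, it matches maximal chains with maximal chains of equal length; taking suprema of chain lengths yields $\ell[K,L]=\ell[K(X),L(X)]$.
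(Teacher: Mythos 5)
Your proof is correct, and it shares its skeleton with the paper's: both arguments reduce the statement to the order isomorphism $[K,L]\to[K(X),L(X)]$, $T\mapsto T(X)$, which immediately forces the two suprema of chain lengths to coincide. The difference is in how that isomorphism is obtained. The paper observes that a finite separable extension has a primitive element, hence FIP, and then simply cites \cite[Propositions 9 and 11]{DPP4}; you prove the crucial point --- surjectivity --- from scratch, by passing to the Galois closure $N$ of $L/K$, showing that restriction gives an isomorphism $\mathrm{Gal}(N(X)/K(X))\to\mathrm{Gal}(N/K)$ carrying $\mathrm{Gal}(N(X)/L(X))$ onto $\mathrm{Gal}(N/L)$ (for restriction to be well defined one should note that $N$ is the algebraic closure of $K$ in $N(X)$, hence stable under all such automorphisms; alternatively your extension map plus a degree count does the job), and then matching the two Galois correspondences to conclude that $M=(M\cap L)(X)$ for every $M\in[K(X),L(X)]$. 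What your route buys is a self-contained, purely classical field-theoretic argument; what the paper's route buys is brevity and uniformity, since the cited results of \cite{DPP4} concern FIP extensions of Nagata rings in general and are part of the machinery used throughout the paper. One small misattribution, which does not affect validity: your parenthetical ``(finite, by FCP)'' is not quite right, as FCP alone does not bound $|[K,L]|$ (a finite purely inseparable extension can have infinitely many intermediate fields); the finiteness here comes from separability via the primitive element theorem, exactly as in the paper's proof --- and in any case an order isomorphism of posets preserves suprema of chain lengths with no finiteness hypothesis needed.
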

\begin{proof} Since $K\subset L$ has FCP, its degree  is finite. As a finite separable extension has a primitive element, it has FIP. We infer from \cite[Propositions 9 and 11]{DPP4}  that there is an order-isomorphism $[K,L]\to[K(X),L(X)]$, given by $T\mapsto T(X)$. It follows that any maximal chain of $K\subset L$ leads to a maximal chain of $K(X)\subset L(X)$. Conversely, any maximal chain of $K(X)\subset L(X)$ comes from a maximal chain of $K\subset L$, giving $\ell[K,L]=\ell[K(X),L(X)]$. 
\end{proof}

\begin{proposition} \label{2.3} Let $K\subset L$ be an FCP purely inseparable field extension. Then, $\ell[K,L]=\ell[K(X),L(X)]$. 
\end{proposition}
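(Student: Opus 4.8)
The plan is to reduce everything to degree computations, exploiting the special feature of purely inseparable extensions that the length of $[K,L]$ is completely determined by the degree $[L:K]$. Write $p:=\mathrm{char}(K)>0$. Since $K\subset L$ has FCP it is finite, and being purely inseparable we have $[L:K]=p^n$ for some integer $n\geq 0$. Note that the order-isomorphism method of Proposition~\ref{2.2} is unavailable here, because a purely inseparable extension of degree $\geq p^2$ may fail FIP; so I would work entirely through lengths and degrees.

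First I would establish the key fact that $\ell[K,L]=n$. For the upper bound, observe that in any chain of intermediate fields $M_0\subset M_1\subset\cdots\subset M_k$ inside $L$, each proper inclusion $M_i\subset M_{i+1}$ is again purely inseparable (a subextension of a purely inseparable extension is purely inseparable), so $[M_{i+1}:M_i]$ is a nontrivial power of $p$, hence $\geq p$; multiplicativity of degrees then forces $p^k\leq[M_k:M_0]\leq[L:K]=p^n$, i.e. $k\leq n$. For the lower bound, when $n\geq 1$ refine $K\subset L$ to a maximal chain; by the remarks preceding this proposition each of its minimal steps is purely inseparable of degree exactly $p$, so the product of the step-degrees being $p^n$ forces the chain to have exactly $n$ steps. (The case $n=0$ is $L=K$ and is trivial.) Hence $\ell[K,L]=n$.

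Next I would transfer both hypotheses to the Nagata rings. Using $L(X)\simeq L\otimes_K K(X)$, if $L=K[\alpha_1,\ldots,\alpha_r]$ with each $\alpha_i$ purely inseparable over $K$, then $L(X)=K(X)[\alpha_1,\ldots,\alpha_r]$; since $\alpha_i^{p^{m_i}}\in K\subseteq K(X)$ for suitable $m_i$, each $\alpha_i$ remains purely inseparable over $K(X)$, and $\mathrm{char}(K(X))=p$. Thus $K(X)\subset L(X)$ is again purely inseparable, of degree $[L(X):K(X)]=[L:K]=p^n$. Applying the length formula of the previous step to $K(X)\subset L(X)$ then yields $\ell[K(X),L(X)]=n=\ell[K,L]$, as desired.

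The step I would be most careful about is the claim that $\ell$ depends only on the degree, since this is what makes the whole argument purely multiplicative and sidesteps the possible failure of FIP. It rests on two facts from the opening of this section: that a minimal purely inseparable extension has degree exactly $p$, and that every subextension of a purely inseparable extension is purely inseparable (so no minimal \emph{separable} step, which might carry a different degree, can ever intervene in a chain). Granting these, no genuine obstacle remains; the remaining verifications—pure inseparability of $K(X)\subset L(X)$ and equality of degrees—are immediate from $L(X)\simeq L\otimes_K K(X)$.
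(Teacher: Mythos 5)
Your proof is correct and follows essentially the same route as the paper: both reduce to the observation that $[L:K]=p^n$ forces $\ell[K,L]=n$ (since every minimal purely inseparable step has degree exactly $p$), and both transfer pure inseparability and the degree to $K(X)\subset L(X)$ via $L(X)\simeq L\otimes_K K(X)$. If anything, your write-up is more careful than the paper's, which asserts that there is \emph{only one} maximal chain from $K$ to $L$ --- an over-claim that fails in general when FIP fails, e.g. for $\mathbb{F}_p(x^p,y^p)\subset\mathbb{F}_p(x,y)$ --- whereas you use only the correct statement that every chain has length at most $n$ and every maximal chain has length exactly $n$.
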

\begin{proof} Since $K\subset L$ is an FCP purely inseparable field extension, $K$ is a field of characteristic a prime number $p$ and $[L:K]$ is a power of $p$, say $p^n$. It follows that there is only one maximal chain composing $K\subset L$, and it has length $n$, and leads to a maximal chain composing $K(X)\subset L(X)$ with length $n$, which is also purely inseparable, with $K(X)$ of characteristic $p$ and $[L(X):K(X)]= p^n$. Then, $n=\ell[K,L]=\ell[K(X),L(X)]$. 
\end{proof}

\begin{proposition} \label{2.4} Let $K\subset L$ be an FCP field extension and let $K_s$ be the separable closure of $K$ in $L$. Then,  $K_s(X)$ is the separable closure of $K(X)$ in $L(X)$. 
\end{proposition}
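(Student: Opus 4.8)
The plan is to work with the tower $K \subseteq K_s \subseteq L$. Since $K \subset L$ is FCP it is finite, hence algebraic, so $K \subseteq K_s$ is separable and $K_s \subseteq L$ is purely inseparable. Recall that over a field $F$ every nonzero polynomial has content $F$, so $\Sigma_F = F[X]\setminus\{0\}$ and $F(X) = \mathrm{Frac}(F[X])$ is the ordinary rational function field; in particular $F(X)$ is a field, and the formation of $F(X)$ is compatible with generation, i.e. if $F = F_0(\beta_1,\dots,\beta_r)$ then $F(X) = F_0(X)(\beta_1,\dots,\beta_r)$. Writing $M$ for the separable closure of $K(X)$ in $L(X)$ (well defined, since $L(X) \simeq L\otimes_K K(X)$ has finite degree $[L:K]$ over $K(X)$, hence is algebraic), it suffices to prove the two claims: (1) $K(X) \subseteq K_s(X)$ is separable, and (2) $K_s(X) \subseteq L(X)$ is purely inseparable. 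Granting these, (1) gives $K_s(X) \subseteq M$; and conversely any $\gamma \in M$ is separable over $K(X)$, hence separable over the larger field $K_s(X)$ (its minimal polynomial over $K_s(X)$ divides the separable one over $K(X)$), while (2) makes $\gamma$ purely inseparable over $K_s(X)$, forcing $\gamma \in K_s(X)$. Thus $M = K_s(X)$.

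For claim (1), I would use that the finite separable extension $K \subseteq K_s$ has a primitive element $\alpha$ whose minimal polynomial $f$ over $K$ is separable. By compatibility with generation, $K_s(X) = K(X)(\alpha)$, and the minimal polynomial of $\alpha$ over $K(X)$ divides $f$, so it too is separable; hence $K(X) \subseteq K_s(X)$ is separable. For claim (2), write $L = K_s(\beta_1,\dots,\beta_r)$; since $K_s \subseteq L$ is purely inseparable, each $\beta_j$ satisfies $\beta_j^{\,p^{m_j}} \in K_s \subseteq K_s(X)$ for a suitable power of the characteristic $p$. Again by compatibility with generation, $L(X) = K_s(X)(\beta_1,\dots,\beta_r)$ is generated over $K_s(X)$ by purely inseparable elements, whence $K_s(X) \subseteq L(X)$ is purely inseparable.

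The only point requiring care is the compatibility of the Nagata construction with passage to fraction fields and with generation, which is what lets me assert $F(X) = \mathrm{Frac}(F[X])$, $K_s(X) = K(X)(\alpha)$ and $L(X) = K_s(X)(\beta_1,\dots,\beta_r)$; once this is in hand, the classical fact that an element simultaneously separable and purely inseparable over a field already lies in that field closes the argument. In characteristic $0$ the statement is vacuous, since then $K_s = L$ and $K_s(X) = L(X)$.
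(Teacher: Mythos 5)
Your proposal is correct and follows essentially the same route as the paper: both prove that $K(X)\subseteq K_s(X)$ is separable via a primitive element of $K\subseteq K_s$ (whose minimal polynomial stays separable over $K(X)$), and that $K_s(X)\subseteq L(X)$ is purely inseparable because it is generated by elements of $L$ that are purely inseparable over $K_s$. Your write-up merely makes explicit two points the paper leaves implicit — the compatibility of the Nagata construction with fraction fields and generation, and the standard argument that a tower (separable)$\circ$(purely inseparable) identifies the separable closure — so there is nothing to correct.
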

\begin{proof} We got in the proof of Proposition~\ref{2.2} that $K\subseteq K_s$ has FIP, and so has a primitive element $\alpha$, which is separable over $K$. Then, $\alpha$ is also a primitive element of the extension $K(X)\subset K_s(X)$, and is separable over $K(X)$. It follows that $K(X)\subset K_s(X)$ is a separable extension. 

Moreover, $K_s\subseteq L$ is purely inseparable. Then, any element of $L$ is purely inseparable over $ K_s$, so that any element of $L(X)$ is purely inseparable over $K_s(X)$. Hence, $K_s(X)\subseteq L(X)$ is purely inseparable, giving that $K_s(X)$  is the separable closure of $K(X)$ in $L(X)$. 
\end{proof}

We can now state the result for FCP fields extensions.

\begin{theorem} \label{2.5} Let $K\subset L$ be an FCP field extension. Then, $\ell[K,L]=\ell[K(X),L(X)]$. 
\end{theorem}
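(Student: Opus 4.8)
The plan is to reduce Theorem~\ref{2.5} to the three special cases already established, by cutting both the given extension and its Nagata counterpart at their separable closures. Since $K\subset L$ has FCP it is finite, hence algebraic, so it makes sense to speak of its separable closure $K_s$. Applying Proposition~\ref{2.1} to $K\subset L$ gives
\[
\ell[K,L]=\ell[K,K_s]+\ell[K_s,L].
\]

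Next I would treat each summand with the matching special case. The extension $K\subseteq K_s$ is separable with FCP, so Proposition~\ref{2.2} yields $\ell[K,K_s]=\ell[K(X),K_s(X)]$. The extension $K_s\subseteq L$ is purely inseparable with FCP, so Proposition~\ref{2.3} yields $\ell[K_s,L]=\ell[K_s(X),L(X)]$. Adding these, I obtain $\ell[K,L]=\ell[K(X),K_s(X)]+\ell[K_s(X),L(X)]$.

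It then remains to recognize the right-hand side as a length computed along a single intermediate field for the extension $K(X)\subset L(X)$. Here Proposition~\ref{2.4} is the crucial input: it identifies $K_s(X)$ as the separable closure of $K(X)$ in $L(X)$. Before applying Proposition~\ref{2.1} to $K(X)\subset L(X)$, I must check that this extension has FCP; this is immediate, since, as observed at the start of this section, $[L(X):K(X)]=[L:K]$ is finite, so $K(X)\subset L(X)$ is a finite field extension. Proposition~\ref{2.1} applied to $K(X)\subset L(X)$ then gives
\[
\ell[K(X),L(X)]=\ell[K(X),K_s(X)]+\ell[K_s(X),L(X)],
\]
and comparing this with the previous expression completes the proof.

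Because every ingredient is already in place, the argument is essentially a bookkeeping assembly rather than a new computation; the only point demanding genuine care is the correct matching of the separable closures across the passage $R\mapsto R(X)$, which is exactly what Proposition~\ref{2.4} supplies, together with the harmless verification that finiteness of the degree (and hence the FCP property) is preserved under the Nagata construction.
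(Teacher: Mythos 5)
Your proposal is correct and follows essentially the same route as the paper's proof: apply Proposition~\ref{2.1} twice (once to $K\subset L$ and once to $K(X)\subset L(X)$), use Proposition~\ref{2.4} to identify $K_s(X)$ as the separable closure of $K(X)$ in $L(X)$, and convert the two summands via Propositions~\ref{2.2} and~\ref{2.3}. Your added check that $K(X)\subset L(X)$ has FCP (via $[L(X):K(X)]=[L:K]<\infty$) is a detail the paper leaves implicit from its opening remarks in Section~2, but it is accurate and harmless.
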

\begin{proof} Let $K_s$ be the separable closure of $K$ in $L$. Then, $K_s(X)$ is the separable closure of $K(X)$ in $L(X)$ by Proposition~\ref{2.4}. Applying Proposition~\ref{2.1} twice, Proposition~\ref{2.2} and Proposition~\ref{2.3}, we get that $\ell[K(X),L(X)]=\ell[K(X),K_s(X)]+\ell[K_s(X),L(X)]=\ell[K,K_s]+\ell[K_s,L]=\ell[K,L]$.
\end{proof}

This gives the  result needed for the next section.

\begin{corollary}\label{2.6} Let $R\subset S$ be an FCP t-closed extension. Then, $\ell[R,S]=\ell[R(X),S (X)]$. 
\end{corollary}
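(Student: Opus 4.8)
The plan is to reduce everything to the field-theoretic Theorem~\ref{2.5} by cutting $R\subseteq S$ along its integral closure and using t-closedness to control each piece. Throughout I would keep in mind that the standard order-embedding $T\mapsto T(X)$ of $[R,S]$ into $[R(X),S(X)]$ already gives $\ell[R(X),S(X)]\ge\ell[R,S]$, so the whole burden is the reverse inequality; but it is cleaner to prove equality piece by piece.

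First I would exploit the canonical decomposition of Definition~\ref{1.3}. Since $R\subset S$ is t-closed, ${}_S^tR=R$, and because the t-closure is computed inside the integral closure this forces $R\subseteq\overline R$ to be t-closed integral and hence, being free of any infra-integral step, a composite of \emph{inert} minimal extensions (decomposed and ramified steps are infra-integral by Theorem~\ref{1.4} and would enlarge ${}_S^tR$). On the other hand $\overline R\subseteq S$ is integrally closed, so by Theorem~\ref{1.2} each of its minimal steps is a flat epimorphism. Using the additivity $\ell[R,S]=\ell[R,\overline R]+\ell[\overline R,S]$ recorded after Lemma~\ref{1.5}, together with the fact that the integral closure of $R(X)$ in $S(X)$ is $\overline R(X)$ (so the same additivity holds after the Nagata construction), it suffices to establish the equality separately for the inert integral extension $R\subseteq\overline R$ and for the flat-epimorphic extension $\overline R\subseteq S$.

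For $\overline R\subseteq S$ no residual extension is ever created, since the crucial ideal $M$ of a minimal flat epimorphism satisfies $MS=S$, and this persists after the Nagata construction: $M\overline R(X)\cdot S(X)=S(X)$. Thus each step remains a minimal flat epimorphism, and I would argue, exactly as for FIP field extensions, that $T\mapsto T(X)$ is in fact a bijection $[\overline R,S]\to[\overline R(X),S(X)]$ (every intermediate ring of a flat-epimorphic FCP extension survives and is of this form). Hence $\ell[\overline R,S]=\ell[\overline R(X),S(X)]$ with no appeal to field theory. For the inert part $R\subseteq\overline R$ I would instead localize: by Lemma~\ref{1.9} the support $\mathrm{MSupp}(\overline R/R)$ is finite, and since the length of an FCP extension is the sum of the lengths of its localizations at the maximal ideals of its support, I may assume $R$ local. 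Each inert minimal step $A\subset B$, with crucial ideal $M\in\mathrm{Max}(A)\cap\mathrm{Max}(B)$, is then governed by the minimal residue field extension $A/M\subset B/M$, which passes under the Nagata construction to $(A/M)(X)\subset(B/M)(X)$; by Theorem~\ref{2.5} this residual extension again has length $1$, so the step stays minimal and contributes exactly one step to $R(X)\subseteq\overline R(X)$.

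The hard part is the upper bound in this last step, namely that the Nagata construction does not create any \emph{new} intermediate rings inside the inert integral range and so cannot lengthen a maximal chain. This is precisely where Theorem~\ref{2.5} is indispensable: the only candidate new intermediate rings arise from intermediate fields of the residual extensions $(A/M)(X)\subset(B/M)(X)$, and Theorem~\ref{2.5} (through Propositions~\ref{2.2} and~\ref{2.3}) says these residual extensions have no more length than $A/M\subset B/M$. Making this rigorous requires translating the residue-field statement back to the ring level and then summing over the support; assembling these local residual computations into the global equality $\ell[R,\overline R]=\ell[R(X),\overline R(X)]$, and combining it with the flat-epimorphic case, yields $\ell[R,S]=\ell[R(X),S(X)]$.
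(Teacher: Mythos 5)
Your skeleton (localize at the finitely many maximal ideals of the support, pass to residue fields, invoke Theorem~\ref{2.5}) is in the same spirit as the paper's proof, but the step you yourself flag as ``the hard part'' is a genuine gap, and the route you sketch for it does not work. Producing, from a maximal chain of inert minimal steps $R_i\subset R_{i+1}$, a chain of minimal steps $R_i(X)\subset R_{i+1}(X)$ only exhibits \emph{one} maximal chain of the right length in $[R(X),\overline R(X)]$; since $\ell$ is a supremum over \emph{all} chains, this reproves the lower bound and says nothing about the upper bound. Your claim that ``the only candidate new intermediate rings arise from intermediate fields of the residual extensions'' of the individual steps is precisely what needs proof, and it cannot be obtained step by step: an arbitrary $T\in[R(X),\overline R(X)]$ need not be comparable to the intermediate rings $R_i(X)$ of your chain, so it is not captured by the residual extension of any single step.

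The missing mechanism is the conductor, applied to the whole quasi-local extension at once rather than to each minimal step. For an integral t-closed FCP extension with $(R,M)$ quasi-local, Lemma~\ref{3.1}(3) (i.e. \cite[Lemma 3.17]{DPP3}) gives $M=(R:S)\in\mathrm{Max}(S)$, and this persists under the Nagata construction: $MR(X)=(R(X):S(X))=MS(X)$. Because $M$ (resp. $MR(X)$) is then a common ideal of $R$ and $S$ (resp. of $R(X)$ and $S(X)$), \cite[Proposition 3.7]{DPP2} yields order-isomorphisms $[R,S]\cong[R/M,S/M]$ and $[R(X),S(X)]\cong[(R/M)(X),(S/M)(X)]$; only after this identification does Theorem~\ref{2.5}, applied to the field extension $R/M\subset S/M$, close the argument. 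That is what ``translating the residue-field statement back to the ring level'' actually requires, and it is global, not per-step. Two further remarks: your preliminary splitting along $\overline R$ matters only if one reads ``t-closed'' as allowing non-integral extensions --- in the paper the corollary is applied (in Theorem~\ref{3.3}) to integral extensions, where $\overline R=S$ and that splitting is vacuous; and in the non-integral reading, your asserted bijection $[\overline R,S]\to[\overline R(X),S(X)]$ for integrally closed FCP extensions is itself a nontrivial theorem of \cite{DPP3}, not something that follows ``exactly as for FIP field extensions.''
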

\begin{proof} From \cite[Lemmata 3.3 and 3.15]{DPP3}, we get that $R(X)\subset S(X)$ is an FCP t-closed extension, with $\{MR(X)\mid M\in\mathrm{MSupp}(S/R)\}= \mathrm{MSupp}(S(X)/R(X))$. Then, \cite[Proposition 4.6 and Lemma 3.16 proof]{DPP3} give that $\ell[R(X),S(X)]=\sum [\ell [R_M(X),S_M(X)] | M\in \mathrm {MSupp}(S/R)] = \sum [\ell [R_M,S_M] | M\in \mathrm {MSupp}(S/R)]=\ell[R,S]$.
Hence we can reduce the proof to the case of a quasi-local ring $(R,M)$. Since  $M=(R:S)\in\mathrm{Max}(S)$ by \cite[Lemma 3.17]{DPP3}, we get $MR(X)=(R(X):S(X))= MS(X)$.  Now  $\ell[R(X),S(X)]=\ell[R(X)/MR(X),S(X)/MR(X)]=$  $\ell [(R/M)(X),(S/M)(X)]$ are  consequences of \cite[Proposition 3.7]{DPP2}. We then observe that  $\ell[R,S]= \ell[R/M,S/M]=\ell[(R/M)(X),(S/M)(X)]$ in view of Theorem~\ref{2.5} and \cite[Proposition 3.7]{DPP2}.
\end{proof}

\section{ On the lengths of FCP extensions of Nagata rings}

To introduce this section, we give the following lemma.

\begin{lemma}\label{3.1} Let $R \subset S$ be an integral extension and consider a maximal chain $\mathcal C$ of $ R$-subextensions of $S$ defined by $R=R_0\subset\cdots\subset R_i\subset\cdots\subset R _n= S$, where $R_i\subset R_{i+1}$ is  minimal for each $i\in\{0,\ldots,n-1\}$. Then, 

\begin{enumerate}
\item $R\subset S$ is infra-integral if and only if, for each $i\in\{0,\ldots,n-1\}$,  $R_i\subset R_{i+1}$ is either ramified or  decomposed. 

\item  $R \subset S$ is t-closed if and only if $R_i\subset R_{i+1}$ is inert for each $i\in\{0,\ldots,n-1\}$. 

\item If in addition $(R,M)$ is a quasi-local ring and the conditions of (2) hold, then $M= (R:S)$ and $(S,M)$ is a quasi-local ring.
\end{enumerate}
\end{lemma}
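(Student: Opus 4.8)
The plan is to read all three statements through the $\mathcal{P}$-closure formalism set up just before Lemma~\ref{1.5}, with $\mathcal{P}$ the property of being infra-integral inside the class $\mathcal{C}$ of integral extensions. By Definition~\ref{1.3} the corresponding closure $R^{\mathcal{P}}$ is precisely the t-closure ${}_S^tR$, since ${}_S^tR$ is at once the largest infra-integral subextension of $R\subseteq S$ and the smallest intermediate ring that is t-closed in $S$. The one external input about a single minimal step is Theorem~\ref{1.4}: a minimal integral extension is infra-integral exactly when it is ramified or decomposed, and is inert exactly when it is not infra-integral. So for minimal steps, $\mathcal{P}$ and ``ramified or decomposed'' coincide, and $\neg\mathcal{P}$ and ``inert'' coincide.

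For part~(1), the direction ``$\Leftarrow$'' is the gluing half of property~(i): if all steps have $\mathcal{P}$ then, attaching them one at a time, $R\subset S$ has $\mathcal{P}$, i.e.\ is infra-integral. For ``$\Rightarrow$'' I would feed the tower $R\subseteq R_i\subseteq R_{i+1}\subseteq S$ into the restriction half of~(i): infra-integrality of $R\subset S$ descends first to $R\subseteq R_{i+1}$ and then to $R_i\subseteq R_{i+1}$, which is therefore ramified or decomposed by Theorem~\ref{1.4}. For part~(2), I first note the dictionary entry that $R\subset S$ is t-closed if and only if $R^{\mathcal{P}}=R$ (the smallest t-closed intermediate ring is $R$ itself precisely when $R$ is already t-closed in $S$). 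Granting this, ``$\Leftarrow$'' is property~(iv): if all steps are inert then none has $\mathcal{P}$, so $R^{\mathcal{P}}=R$ and $R\subset S$ is t-closed; and ``$\Rightarrow$'' is property~(iii): when $R^{\mathcal{P}}=R$ no subextension of $R\subseteq S$ has $\mathcal{P}$, so no $R_i\subset R_{i+1}$ is infra-integral, whence each is inert by Theorem~\ref{1.4}.

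For part~(3) I would induct on the chain length $n$, all of whose steps are now inert. In the base case of a single inert extension $R\subset R_1$ with $(R,M)$ quasi-local, Theorem~\ref{1.2} identifies the crucial ideal with the unique maximal ideal $M$ of $R$, and Theorem~\ref{1.4}(a) gives $(R:R_1)=M\in\mathrm{Max}(R_1)$ with $R_1/M$ a field. Every maximal ideal $Q$ of $R_1$ contracts to $M$ by integrality, so $M\subseteq Q$; as $R_1/M$ is a field this forces $Q=M$, hence $(R_1,M)$ is quasi-local with the same maximal ideal $M\subseteq R$. The inductive step applies the base case to $R_{n-1}\subset S$, the hypothesis giving $(R_{n-1},M)$ quasi-local, and yields $(S,M)$ quasi-local. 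Finally $M\subseteq R$ is an ideal of $S$ contained in $R$, so $M\subseteq(R:S)$, while $(R:S)$ is a proper ideal of the quasi-local ring $(S,M)$, so $(R:S)\subseteq M$; hence $(R:S)=M$.

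The step I expect to demand the most care is making the infra-integral/t-closed dictionary rigorous: verifying that infra-integrality genuinely satisfies the two-sided tower equivalence~(i) --- the restriction half in particular leans on lying-over, so that each prime of an intermediate ring is dominated by a prime of $S$ and residual field extensions compose into isomorphisms --- and confirming that its closure is exactly ${}_S^tR$, which is what licenses (iii) and~(iv). Once this is secured, parts~(1) and~(2) are formal, and part~(3) reduces to the bookkeeping of keeping the common maximal ideal inside $R$ so that the conductor computation closes up.
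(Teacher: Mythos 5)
Your parts (1) and (3) are correct. Part (1) is essentially the paper's own (one-line) argument, spelled out: residual extensions compose, and lying-over lets you restrict infra-integrality to subextensions, after which Theorem~\ref{1.4} gives the ramified/decomposed dictionary. Your part (3) --- induction along the chain, using that $(R_i:R_{i+1})=M$ stays the unique maximal ideal at each stage by integrality, then the squeeze $M\subseteq(R:S)\subseteq M$ --- is a nice self-contained argument; the paper simply cites \cite[Lemma 3.17]{DPP3} here, so on this point your proposal is more explicit than the original.

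Part (2), however, is circular as written. Via the dictionary of Theorem~\ref{1.4}, the closure axioms you invoke are not background facts but are \emph{literally} the two implications of statement (2): axiom (iv) for $\mathcal{P}=$ infra-integral says that a composite of finitely many inert minimal extensions satisfies ${}_S^tR=R$, which is the $\Leftarrow$ direction, and axiom (iii) (read with ${}_S^tR=R$) says that a t-closed integral extension has no infra-integral subextension, which is the $\Rightarrow$ direction. The paper's logic runs the other way around: Lemma~\ref{3.1} is proved first --- the $\Rightarrow$ direction by citing \cite[Lemma 5.6]{DPP2}, the $\Leftarrow$ direction because inert minimal extensions are t-closed and t-closedness is transitive --- and only afterwards is the pair Lemma~\ref{1.5} plus Lemma~\ref{3.1} used (in Proposition~\ref{3.2}) to justify treating the t-closure as a $\mathcal{P}$-closure at all. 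You do flag verifying the dictionary as ``the step demanding the most care,'' but you never carry it out, and that verification is not a routine bookkeeping check: its two halves are exactly the two halves of (2), so nothing has been proved. To repair it, argue directly. For $\Leftarrow$: field extensions are t-closed (if $b^2-rb\neq 0$ then $b=(b^3-rb^2)/(b^2-rb)\in K$; otherwise $b(b-r)=0$ forces $b\in\{0,r\}$), so an inert minimal extension, whose conductor $M$ is maximal in both rings, is t-closed by reduction mod $M$; then compose, since a tower of t-closed extensions is t-closed immediately from the definition. For $\Rightarrow$ you genuinely need the external input \cite[Lemma 5.6]{DPP2} (or the corresponding result of \cite{Pic 2}): a t-closed integral extension admits no infra-integral (ramified or decomposed) minimal subextension.
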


\begin{proof} (1) is obvious, because all the residual field extensions are isomorphisms.

(2) Assume that $R\subset S$ is t-closed. Then $R_i\subset R_{i+1}$ is inert for each $i\in\{0,\ldots,n-1\}$ in view of \cite[Lemma 5.6]{DPP2}. Conversely, if $R_i\subset R_{i+1}$ is inert for each $i\in\{0,\ldots,n-1\}$, and so t-closed, then $R \subset S$ is obviously t-closed. 

(3) Moreover, if $(R,M)$ is quasi-local, \cite[Lemma 3.17]{DPP3} shows that $M$ is the only maximal ideal of $S$. 
 \end{proof}
 
 We can now see  how the t-closure is involved in the length of an integral FCP extension.

\begin{proposition}\label{3.2} Let $R\subset S$ be an integral FCP extension,  then $\ell[R,S]=\ell[R,{}_S^tR]+\ell[{}_S^tR,S]$.
\end{proposition}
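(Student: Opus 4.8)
The plan is to apply Lemma~\ref{1.5} with a well-chosen property $\mathcal{P}$ and class $\mathcal{C}$. The natural candidate, as signalled by the final sentence of Definition~\ref{1.3}, is to take $\mathcal{P}$ to be the property of being an \emph{infra-integral} extension and $\mathcal{C}$ to be the class of integral ring extensions. The $\mathcal{P}$-closure of $R$ in $S$ is then precisely the t-closure ${}_S^tR$, since by Definition~\ref{1.3} the t-closure is the greatest $B'\in[R,S]$ such that $R\subseteq B'$ is infra-integral. Once the hypotheses of Lemma~\ref{1.5} are verified, the conclusion $\ell[R,S]=\ell[R,{}_S^tR]+\ell[{}_S^tR,S]$ is immediate.

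The substance of the argument therefore lies in checking conditions (i)--(iv) of the setup preceding Lemma~\ref{1.5} for the infra-integrality property on integral extensions. First I would note that integral extensions form a class stable under subextensions, and that $R\subseteq S$ having FCP passes to $R\subseteq{}_S^tR$ and ${}_S^tR\subseteq S$. For condition (i), the tower stability of infra-integrality, I would use the characterization of infra-integral extensions via residual field extensions being isomorphisms (Definition~\ref{1.3}): in a tower $R\subseteq U\subseteq S$, the composite residual extension is an isomorphism if and only if both factors are, which gives the ``if and only if''. Condition (ii), the existence of a largest infra-integral subextension, together with the fact that it equals ${}_S^tR$, is exactly the content of the definition of t-closure. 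Condition (iii) says no subextension of ${}_S^tR\subseteq S$ is infra-integral; this follows because ${}_S^tR\subseteq S$ is t-closed, and t-closedness is precisely the obstruction to any nontrivial infra-integral piece --- concretely, using Lemma~\ref{3.1}(2), a maximal chain of the t-closed extension ${}_S^tR\subseteq S$ consists of inert minimal steps, and by Theorem~\ref{1.4} inert minimal extensions are \emph{not} infra-integral.

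Condition (iv) requires that when $R\subset S$ is a composite of finitely many minimal extensions none of which is infra-integral, then ${}_S^tR=R$. Again by Theorem~\ref{1.4}, a minimal integral extension fails to be infra-integral exactly when it is inert, so the hypothesis forces all the minimal steps to be inert; by Lemma~\ref{3.1}(2) the whole extension $R\subset S$ is then t-closed, whence ${}_S^tR=R$. I expect condition (iii) to be the most delicate point to phrase carefully, since it asserts a property of \emph{all} subextensions $U\subseteq V$ of ${}_S^tR\subseteq S$, not merely of the endpoints; the clean way around this is to invoke Lemma~\ref{3.1}(2) to reduce to inert minimal steps, and then observe that any subextension of a t-closed extension inherits a maximal chain of inert (hence non-infra-integral) minimal steps, so Theorem~\ref{1.4} again applies. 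With all four conditions in hand, Lemma~\ref{1.5} yields the stated additivity.
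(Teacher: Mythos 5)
Your proposal is correct and follows exactly the paper's own route: the paper's proof is the one-liner ``Use Lemma~\ref{1.5} and Lemma~\ref{3.1}, where $\mathcal{P}$ is the property to be an infra-integral extension, and $R^{\mathcal{P}}={}_S^tR$ is the t-closure of $R$ in $S$.'' You have simply spelled out the verification of conditions (i)--(iv) (via Lemma~\ref{3.1} and Theorem~\ref{1.4}) that the paper leaves implicit, and your verification is sound.
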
 

\begin{proof} Use Lemma~\ref{1.5} and Lemma~\ref{3.1}, where $\mathcal{P}$ is the property to be an infra-integral extension, and $R^{\mathcal{P}}={}_S^tR$ is the t-closure of $R$ in $S$.
\end{proof}

We are now in position to give a positive answer to \cite[Remark~4.18(b)]{DPP3}.

\begin{theorem}\label{3.3} Let $R\subset S$ be an FCP extension. Then, $\ell[R,S]=\ell[R(X),S(X)]$.
\end{theorem}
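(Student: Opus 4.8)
The plan is to reduce the general FCP extension $R\subset S$ to pieces that have already been handled, using the canonical decomposition and the additivity of length established in Lemma~\ref{1.5}. Recall from Definition~\ref{1.3} that any extension factors as $R\subseteq {}_S^tR\subseteq \overline R\subseteq S$, where $R\subseteq {}_S^tR$ is infra-integral, ${}_S^tR\subseteq\overline R$ is t-closed (integral), and $\overline R\subseteq S$ is integrally closed. Correspondingly I would like to show $\ell[R,S]$ splits as a sum over these three layers and that each layer is preserved under the Nagata construction. The formation $T\mapsto T(X)$ is functorial and order-compatible, and by \cite[Theorem 3.9]{DPP3} it preserves the FCP property; moreover it is known to be compatible with the canonical decomposition, so that the t-closure and integral closure of $R(X)\subseteq S(X)$ are exactly $({}_S^tR)(X)$ and $(\overline R)(X)$.

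First I would establish additivity of the length on both sides. On the base side, Lemma~\ref{1.5} applied with $\mathcal P$ the infra-integral property (as in Proposition~\ref{3.2}) together with the analogous splitting for the integral closure (the remark following Lemma~\ref{1.5}, i.e.\ \cite[Theorem 4.11]{DPP3}) gives
$$
\ell[R,S]=\ell[R,{}_S^tR]+\ell[{}_S^tR,\overline R]+\ell[\overline R,S].
$$
The same three-term decomposition holds for $R(X)\subseteq S(X)$, since its canonical decomposition is obtained by applying $(X)$ to each term. Thus it suffices to prove the equality $\ell=\ell(X)$ separately on each of the three layers.

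Next I would treat the layers one at a time. The t-closed (integral) layer ${}_S^tR\subseteq\overline R$ is exactly the situation of Corollary~\ref{2.6}, which already gives $\ell[{}_S^tR,\overline R]=\ell[({}_S^tR)(X),(\overline R)(X)]$. For the integrally closed layer $\overline R\subseteq S$, I would use Lemma~\ref{1.8} to identify the support, pass to the localizations $\overline R_M\subseteq S_M$ for $M\in\mathrm{MSupp}(S/\overline R)$, and use the fact that for an integrally closed (flat-epimorphic, Pr\"ufer-type) extension the length is computed from the crucial ideals, each of which survives to $R(X)$ with the same residual data; this should yield the equality termwise as in the computation in the proof of Corollary~\ref{2.6}. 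The remaining and genuinely new case is the infra-integral layer $R\subseteq {}_S^tR$. Here I expect the main obstacle to lie, because infra-integral extensions include the subintegral (ramified/decomposed) pieces where the Nagata construction is most delicate, and one cannot simply invoke field-extension results as in Section 2. For this layer I would again localize at each $M\in\mathrm{MSupp}$, reduce modulo the conductor to a quasi-local situation, and analyze how a maximal chain of minimal ramified/decomposed steps $R_i\subset R_{i+1}$ behaves under $(X)$, showing that each minimal step produces a single minimal step $R_i(X)\subset R_{i+1}(X)$ of the same type and hence that the chain length is preserved.

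The crux is therefore a \emph{minimal-extension-preservation} statement: if $A\subset B$ is a minimal integral extension (inert, decomposed, or ramified) then $A(X)\subset B(X)$ is again minimal of the same type, with the crucial ideal $\mathcal C(A,B)$ replaced by $\mathcal C(A,B)A(X)$. Granting this, every maximal chain of length $n$ for $R\subseteq {}_S^tR$ lifts to a maximal chain of length $n$ for the Nagata extension, and conversely a maximal chain upstairs descends (using surjectivity of $[A,B]\to[A(X),B(X)]$ and the order-isomorphism on suitable subextensions), giving $\ell=\ell(X)$ on this layer. Summing the three preserved layers then recovers $\ell[R,S]=\ell[R(X),S(X)]$. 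I anticipate that verifying the type-preservation in the ramified and decomposed cases — in particular checking $({M'}^2\subseteq M)$ and the residue-field isomorphisms pass correctly to $A(X)$ — is where the real work sits, whereas the inert case is essentially Corollary~\ref{2.6} and the flat-epimorphic case reduces to localization.
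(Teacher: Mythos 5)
Your overall architecture --- canonical decomposition $R\subseteq{}_S^tR\subseteq\overline R\subseteq S$, additivity of length via Lemma~\ref{1.5}, and layer-by-layer comparison --- is exactly the paper's strategy, and your treatment of the t-closed layer (Corollary~\ref{2.6}) and of the integral closure agrees with the paper (which disposes of $\overline R\subseteq S$ in one stroke by citing \cite[Remark 4.18(b)]{DPP3}: the length equality for $R\subseteq S$ holds if and only if it holds for $R\subseteq\overline R$). The gap is in the layer you correctly single out as the crux, the infra-integral one, and it is fatal as written. You propose to get $\ell[R(X),({}_S^tR)(X)]\leq\ell[R,{}_S^tR]$ by descending maximal chains ``using surjectivity of $[A,B]\to[A(X),B(X)]$''. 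That map is injective and order-reflecting, but it is \emph{not} surjective precisely for infra-integral (indeed subintegral) extensions: this failure is the very pathology that Section 4 of the paper is about. In Example~\ref{4.4}, $K\subset T=K[Y]/(Y^4)$ with $K$ a finite field is subintegral with $[K,T]$ finite, yet $[K(X),T(X)]$ is infinite; no map from the finite set $[K,T]$ onto $[K(X),T(X)]$ can exist, so chains upstairs need not come from chains downstairs and your descending step collapses. (Your lifting step, via preservation of the type of minimal extensions, is fine, but it only yields the easy inequality $\ell[R(X),S(X)]\geq\ell[R,S]$.)

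The paper handles this layer not by chain transfer but by citing \cite[Proposition 4.7]{DPP3}, whose content is a module-length computation: in an infra-integral FCP extension every minimal step of a maximal chain is ramified or decomposed, and each such step $A\subset B$ satisfies $\mathrm{L}_A(B/A)=1$, whence $\ell[R,{}_S^tR]=\mathrm{L}_R({}_S^tR/R)$; the same formula holds for the Nagata extension, and the module length is preserved because $({}_S^tR)(X)/R(X)\cong({}_S^tR/R)\otimes_RR(X)$ and simple $R$-modules $R/M$ induce simple $R(X)$-modules $(R/M)(X)$. This gives the equality even though the lattice $[R(X),({}_S^tR)(X)]$ may be strictly (even infinitely) larger than $[R,{}_S^tR]$. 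Note also that you have the novelty inverted: the infra-integral layer was already settled in \cite{DPP3}; the genuinely new ingredient of this paper is the t-closed layer, i.e., Section 2 and Corollary~\ref{2.6}, which you treat as the routine part.
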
 

\begin{proof} Let $R\subset S$ be an FCP extension. We begin to notice that the t-closure of $R(X)$ in $S (X)$ is ${}_{S(X)}^tR(X)=({}_S^tR)(X)$ by \cite[Lemma 3.15]{DPP3}. Moreover, in \cite[Remark 4.18 (b)]{DPP3}, we proved that $\ell[R,S]=\ell[R(X),S(X)]$ if and only if $\ell[R,\overline R]=\ell[R(X),\overline R (X)]$. It follows that we can assume that $R\subset S$ is an integral FCP extension. But, Proposition~\ref{3.2} gives that $\ell[R,S]=\ell[R,{}_S^tR]+\ell[{}_S^tR,S]$, and, in the same way, $\ell[R(X),S(X)]=\ell[R (X),({}_S^tR)(X)]+\ell[({}_S^tR)(X),S(X)]$. Now, $\ell[{}_S^tR,S]=\ell[({}_S^tR)(X),S(X)]$ by Corollary~\ref{2.6}. To end, $\ell[R,{}_S^tR]=\ell[R(X),({}_S^tR)(X)]$  \cite[Proposition 4.7]{DPP3}.
\end{proof}

\begin{corollary}\label{3.4} Let $R\subset S$ be an FCP extension and $n$ a positive integer. Then, $\ell[R,S]=\ell[R(X_1,\ldots,X_n),S(X_1,\ldots,X_n)]$.
\end{corollary}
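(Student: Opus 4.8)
The plan is to argue by induction on $n$, reducing the $n$-variable statement to the one-variable statement already proved in Theorem~\ref{3.3}. The base case $n=1$ is exactly Theorem~\ref{3.3}. For the inductive step, write $A_k:=R(X_1,\ldots,X_k)$ and $B_k:=S(X_1,\ldots,X_k)$ for $0\leq k\leq n$ (with $A_0=R$, $B_0=S$), so that the goal becomes $\ell[R,S]=\ell[A_n,B_n]$.

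The crucial structural fact I would invoke is the standard identification of iterated Nagata rings: for any ring $T$ one has a canonical isomorphism $T(X_1,\ldots,X_n)\cong \bigl(T(X_1,\ldots,X_{n-1})\bigr)(X_n)$, obtained by viewing $T[X_1,\ldots,X_n]=T[X_1,\ldots,X_{n-1}][X_n]$ and checking that a polynomial has unit content over $T$ if and only if, regarded as a polynomial in $X_n$, it has unit content over $T(X_1,\ldots,X_{n-1})$ (a consequence of the Dedekind--Mertens content formula). Applying this to both $T=R$ and $T=S$, and verifying that the two isomorphisms are compatible with the inclusions, identifies the extension $A_n\subset B_n$ with the one-variable Nagata extension $A_{n-1}(X_n)\subset B_{n-1}(X_n)$ of the extension $A_{n-1}\subset B_{n-1}$.

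Next I would establish that $A_{n-1}\subset B_{n-1}$ has FCP. By iterating \cite[Theorem 3.9]{DPP3}, which states that $R\subset S$ has FCP if and only if $R(X)\subset S(X)$ has FCP, the extension $A_{n-1}\subset B_{n-1}$ inherits FCP from $R\subset S$. Theorem~\ref{3.3} then applies to this FCP extension and yields $\ell[A_{n-1},B_{n-1}]=\ell[A_{n-1}(X_n),B_{n-1}(X_n)]=\ell[A_n,B_n]$. Combining this with the induction hypothesis $\ell[R,S]=\ell[A_{n-1},B_{n-1}]$ gives $\ell[R,S]=\ell[A_n,B_n]$, completing the induction.

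The main obstacle I anticipate is the verification of the iterated-Nagata isomorphism and, in particular, that it respects the inclusion $R\subset S$, so that the extension $A_n\subset B_n$ really is the one-variable Nagata extension of $A_{n-1}\subset B_{n-1}$; once this compatibility is in hand, everything else is a direct application of Theorem~\ref{3.3} together with the FCP-transfer result.
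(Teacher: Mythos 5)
Your proof is correct and is exactly the argument the paper intends: the corollary is stated without proof precisely because it follows by induction from Theorem~\ref{3.3}, using the standard identification $T(X_1,\ldots,X_n)\cong\bigl(T(X_1,\ldots,X_{n-1})\bigr)(X_n)$ and the FCP-transfer result \cite[Theorem 3.9]{DPP3} to keep the inductive hypothesis applicable. Your extra care in verifying (via Dedekind--Mertens) that the iterated-Nagata isomorphism is compatible with the inclusion $R\subset S$ is a legitimate point the paper glosses over, but it does not change the route.
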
 

We end this section by some considerations about the length of FCP extensions $R\subseteq S$ with respect to their residual extensions. Following Dobbs and Mullins \cite{DM}, we define $\Lambda(S/R)$ to be the supremum of the lengths of residual extensions of $R\subseteq S$, considered as ring extensions.

\begin{proposition}\label{3.5} Let $R\subset S$ be an FCP extension. Then $\Lambda(S/R)=\Lambda (\overline R/{}_S^tR)$.
\end{proposition}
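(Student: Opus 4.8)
The plan is to exploit the canonical decomposition $R\subseteq {}_S^tR\subseteq \overline R\subseteq S$ and to show that, as far as residual extensions are concerned, both the infra-integral bottom $R\subseteq {}_S^tR$ and the integrally closed top $\overline R\subseteq S$ are invisible, so that the entire Dobbs--Mullins invariant is carried by the t-closed integral middle piece ${}_S^tR\subseteq \overline R$. Concretely, a residual extension of $R\subseteq S$ is a field extension $R_P/PR_P\to S_Q/QS_Q$ with $Q\in\mathrm{Spec}(S)$ and $P:=Q\cap R$. I would factor each such extension along the decomposition and identify which of the three intermediate steps can actually change the residue field.

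First I would treat the bottom step. Since $R\subseteq {}_S^tR$ is infra-integral (Definition~\ref{1.3}), all of its residual extensions are isomorphisms; hence, for every $Q\in\mathrm{Spec}(S)$, writing $P':=Q\cap {}_S^tR$ and $P:=P'\cap R$, the natural map $R_P/PR_P\to ({}_S^tR)_{P'}/P'({}_S^tR)_{P'}$ is an isomorphism. Thus the residual extension of $R\subseteq S$ at $Q$ is canonically isomorphic to that of ${}_S^tR\subseteq S$ at the same $Q$, and since both families are indexed by the identical set $\mathrm{Spec}(S)$, taking suprema gives the genuine equality $\Lambda(S/R)=\Lambda(S/{}_S^tR)$. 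Next I would treat the top step. Because $R\subseteq S$ has FCP, the extension $\overline R\subseteq S$ is integrally closed and composed of minimal extensions, each of which is a flat epimorphism by Theorem~\ref{1.2}; hence $\overline R\subseteq S$ is itself a flat epimorphism. The defining local property of a flat epimorphism yields $S_Q=\overline R_{Q_2}$ for every $Q\in\mathrm{Spec}(S)$, where $Q_2:=Q\cap \overline R$, so that $S_Q/QS_Q\cong \overline R_{Q_2}/Q_2\overline R_{Q_2}$ while the residue field at the ${}_S^tR$-level is unchanged. Therefore the residual extension of ${}_S^tR\subseteq S$ at $Q$ is isomorphic, as a field extension, to the residual extension of ${}_S^tR\subseteq \overline R$ at $Q_2$.

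Putting the two reductions together immediately yields $\Lambda(S/R)\le \Lambda(\overline R/{}_S^tR)$, since every residual extension of $R\subseteq S$ matches one of ${}_S^tR\subseteq \overline R$ of the same length. The reverse inequality is where the real work lies, and I expect it to be the main obstacle: the contraction map $Q\mapsto Q\cap \overline R$ from $\mathrm{Spec}(S)$ to $\mathrm{Spec}(\overline R)$ is injective (flat epimorphism) but need not be surjective, its image being exactly $\mathrm{Spec}(\overline R)\setminus\mathrm{Supp}(S/\overline R)$ by Lemma~\ref{1.8}. I must therefore rule out that some prime $Q_2\in\mathrm{Supp}(S/\overline R)$ contributes a longer residual extension of ${}_S^tR\subseteq \overline R$ than is already realized below a prime of $S$. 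To do this I would invoke the inert structure of the t-closed integral extension ${}_S^tR\subseteq \overline R$ (Lemma~\ref{3.1}(2)), which forces its spectral map to be bijective and concentrates the nontrivial residual extensions at the crucial maximal ideals; combined with the finiteness and explicit description of the supports via the crucial ideals (Lemma~\ref{1.9}), this should allow me to replace any blown-up prime by a genuine prime of $S$ carrying a residual extension of at least the same length, giving $\Lambda(\overline R/{}_S^tR)\le \Lambda(S/R)$ and hence the asserted equality.
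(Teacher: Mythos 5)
Your two reductions in the first half are correct, and they are exactly the paper's own argument: $R\subseteq {}_S^tR$ is infra-integral, so its residual extensions are isomorphisms, and $\overline R\subseteq S$ is a flat epimorphism (the paper deduces this from the Zariski Main Theorem; your composite-of-minimal-flat-epimorphisms argument also needs the fact that an integrally closed FCP extension is Pr\"ufer, so that \emph{every} step of a chain in $[\overline R,S]$, not just the first, is a flat epimorphism), whence $S_Q=\overline R_{Q\cap\overline R}$ and the residue fields upstairs are unchanged. This yields $\Lambda(S/R)\leq\Lambda(\overline R/{}_S^tR)$, and up to this point you are in fact more explicit than the paper, whose entire proof of the equality is the assertion that ``the residual extensions of $R\subseteq S$ identify with the residual extensions of ${}_S^tR\subseteq\overline R$''.

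The gap is the reverse inequality, which you honestly flag but only conjecture how to close: the final step (``this should allow me to replace any blown-up prime by a genuine prime of $S$ carrying a residual extension of at least the same length'') is a hope, not an argument, and no argument can deliver it, because a prime of $\overline R$ in $\mathrm{Supp}(S/\overline R)$ really can carry a residual extension that no prime of $S$ sees. Concretely, let $R:=\mathbb{Q}+t\mathbb{Q}(\sqrt 2)[[t]]$, $V:=\mathbb{Q}(\sqrt 2)[[t]]$ and $S:=\mathbb{Q}(\sqrt 2)((t))$. One checks $[R,S]=\{R,V,S\}$, so $R\subset S$ has FIP, hence FCP; $R\subset V$ is minimal inert (conductor $tV$, residue extension $\mathbb{Q}\subset\mathbb{Q}(\sqrt 2)$), so ${}_S^tR=R$ and $\overline R=V$; and $V\subset S$ is the minimal flat epimorphism from a discrete valuation ring to its quotient field. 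Then $\Lambda(\overline R/{}_S^tR)=1$, realized at the maximal ideal $tV$; but $tV$ blows up in $S$ (that is, $tV\cdot S=S$), $\mathrm{Spec}(S)=\{(0)\}$, and the quotient field of $R$ is already all of $S$ (since $\sqrt 2=(t\sqrt 2)/t$), so the unique residual extension of $R\subseteq S$ is an isomorphism and $\Lambda(S/R)=0$. Thus, taking the paper's definition of $\Lambda$ literally (a supremum over $\mathrm{Spec}(S)$), the asserted equality itself fails here: your proof cannot be completed, but neither can the paper's, whose ``identification'' of residual extensions is injective rather than bijective and silently passes over precisely the surjectivity failure you isolated. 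The statement is unproblematic exactly when every prime of $\overline R$ supporting ${}_S^tR\subseteq\overline R$ survives in $S$ --- for instance when $R\subseteq S$ is integral, where $\overline R=S$ and only your first reduction is needed.
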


\begin{proof} We first observe that an FCP extension $R \subseteq S$ is strongly affine, that is each of the $R$-algebras $T \in [R,S]$ is of finite type. Since $R \subseteq S$ is a composite of minimal morphisms that are either flat epimorphisms or integral morphisms, $R \subseteq T$ is an INC extension for $T \in [R, S]$ and hence a quasi-finite extension. Moreover, the residual extensions of each minimal morphism $T \subset U$, with $T, U \in [R,S]$ are either isomorphisms or minimal field extensions, induced by inert minimal morphisms. Then in the canonical decomposition $R \subseteq {}_S^tR \subseteq \overline R \subseteq S$, the extension $\overline R \subseteq S$ is a flat epimorphism by the Zariski Main Theorem. Therefore the residual extensions of $R \subseteq S$ identify with the residual extensions of ${}_S^tR \subseteq \overline R$ and  the components of maximal chains in $[{}_S^tR,\overline R]$ need to be minimal inert extensions by Lemma \ref{3.1}(2). The above discussion shows that for an FCP extension $R \subseteq S$, then $\Lambda (S/R) = \Lambda (\overline R/{}_S^tR)]$.
\end{proof}

So, it is enough to consider an FCP integral t-closed extension $R\subset S$. 

\begin{proposition}\label{3.6} Let $R\subset S$ be an FCP integral t-closed extension. Then $\Lambda (S/R)=\sup_{M\in\mathrm{MSupp}(S/R)}\ell[R_M,S_M]$ and $\ell[R,S]\leq n\Lambda (S/R)$, where $n:=|\mathrm{MSupp}(S/R)|$.
\end{proposition}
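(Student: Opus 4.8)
The plan is to reduce both assertions to a local computation at the maximal ideals of the support, exploiting that t-closedness forces the conductor to coincide with the maximal ideal, so that the residual field extension faithfully records the local length.

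First I would pin down the support. Since $R\subset S$ is integral, a maximal chain consists of inert minimal extensions by Lemma~\ref{3.1}(2), and by Theorem~\ref{1.4} the crucial ideal of each such step is maximal; as $R\subseteq R_i$ is integral, its contraction to $R$ is again maximal. Hence $\mathrm{Supp}(S/R)=\mathrm{MSupp}(S/R)$, a finite set of cardinality $n$ by Lemma~\ref{1.9}. Consequently, for $P\in\mathrm{Spec}(R)\setminus\mathrm{MSupp}(S/R)$ one has $R_P=S_P$, so every residual extension lying over such a $P$ is an isomorphism and contributes $0$ to $\Lambda(S/R)$. Thus only the ideals $M\in\mathrm{MSupp}(S/R)$ are relevant.

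Next I would localize at a fixed $M\in\mathrm{MSupp}(S/R)$. The extension $R_M\subset S_M$ is still integral, t-closed and FCP, and $R_M$ is quasi-local; its maximal chains consist of inert steps by Lemma~\ref{3.1}(2). Therefore Lemma~\ref{3.1}(3) applies and shows that $S_M$ is quasi-local with maximal ideal $MR_M=(R_M:S_M)$. In particular there is a unique $Q\in\mathrm{Spec}(S)$ over $M$, and the residual extension at $M$ is the finite field extension $\kappa(M)=R_M/MR_M\hookrightarrow S_M/MS_M=\kappa(Q)$. Since $MR_M$ is the conductor, the assignment $T\mapsto T/MR_M$ is a length-preserving order-isomorphism $[R_M,S_M]\to[\kappa(M),\kappa(Q)]$ (cf. \cite[Proposition 3.7]{DPP2}), so $\ell[R_M,S_M]=\ell[\kappa(M),\kappa(Q)]$. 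As this is the only residual extension over $M$, taking the supremum over $M\in\mathrm{MSupp}(S/R)$ yields the first equality $\Lambda(S/R)=\sup_{M}\ell[R_M,S_M]$. For the inequality I would then invoke the additivity of length over the support, $\ell[R,S]=\sum_{M\in\mathrm{MSupp}(S/R)}\ell[R_M,S_M]$, established in \cite[Proposition 4.6]{DPP3} and already used in the proof of Corollary~\ref{2.6}; bounding each summand by the supremum just computed gives $\ell[R,S]\le n\,\Lambda(S/R)$.

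The main obstacle is the local identification in the third paragraph: one must be certain that t-closedness, through the inert structure recorded in Lemma~\ref{3.1}(2),(3), forces $MR_M$ to be simultaneously the conductor and the maximal ideal of $S_M$, for it is precisely this that lets the single residual extension $\kappa(M)\hookrightarrow\kappa(Q)$ carry the full local length $\ell[R_M,S_M]$. The hypothesis cannot be relaxed: in the infra-integral case the residual extensions are all isomorphisms while the local lengths are positive, so $\Lambda$ and $\ell$ decouple entirely. Everything else is bookkeeping with the finite support and the cited additivity formula.
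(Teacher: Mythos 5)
Your proof is correct, and its endgame coincides with the paper's: residual extensions are trivial off $\mathrm{MSupp}(S/R)$; at each $M$ in the support there is a unique prime of $S$ above $M$ and \cite[Proposition 3.7]{DPP2} identifies $[R_M,S_M]$ with the interval of the residual field extension, giving $\Lambda(S/R)=\sup_M\ell[R_M,S_M]$; the additivity formula of \cite[Proposition 4.6]{DPP3} then yields $\ell[R,S]\le n\Lambda(S/R)$. Where you diverge is the reduction to the quasi-local case. The paper does not simply localize: it runs an auxiliary construction (via Lemma~\ref{1.9} and \cite[Lemma 3.3]{DPP2}) that reorders a maximal inert chain into a chain $R=R'_0\subset\cdots\subset R'_n=S$ of t-closed steps, each supported at a single $M_j$, and extracts $\ell[R_{M_j},S_{M_j}]=\Lambda(S_{M_j}/R_{M_j})$ from that global chain. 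You instead assert outright that $R_M\subset S_M$ is ``still integral, t-closed and FCP'' and apply Lemma~\ref{3.1}(3) to it directly. That assertion is the one soft spot in your write-up: stability of t-closedness under localization is true and standard in this literature (t-closure commutes with localization, cf. \cite{Pic 2}), but it is neither proved in this paper nor cited by you, and it is essentially what the paper's chain construction is engineered to supply by other means. You can patch it entirely with the paper's own tools: take a maximal chain of inert minimal extensions for $R\subset S$ (Lemma~\ref{3.1}(2)); localizing at $M$, each step becomes either an equality (Theorem~\ref{1.2}, when $M$ is not the contraction of the crucial ideal) or again an inert minimal extension (Theorem~\ref{1.4}, since the conductor localizes to a maximal ideal and the residual field extension is unchanged), so $R_M\subset S_M$ carries a maximal chain of inert extensions and Lemma~\ref{3.1}(2),(3) apply to it. With that patch your argument is complete, and it is arguably cleaner than the paper's: you trade the global chain-rearrangement for one localization lemma, while the paper's longer route avoids invoking any localization property of t-closedness.
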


\begin{proof}
We get $\sum [\ell[R_M,S_M] | M\in\mathrm{MSupp}(S/R)] = \ell[R,S]\ (*)$ by  \cite[Proposition 4.6]{DPP3}. 
Assume first that $(R,M)$ is a quasi-local ring, and so $(R:S)=M$ by Lemma \ref{3.1}. Then, $\ell[R,S]=\ell[R/M,S/M]=\Lambda(S/R)$ by \cite[Proposition 3.7]{DPP2}. Now, in the general case, set $\mathrm{MSupp}(S/R):=\{M_1,\ldots,M_n\}$. Consider a maximal chain of $R$-subextensions of $S$ defined by $R=R_0\subset\cdots\subset R_i\subset\cdots\subset R_p=S$, where $R_i\subset R_{i+1}$ is minimal inert for each $i\in\{0,\ldots,p-1\}$. In view of Lemma \ref{1.9}, we have, $\{M_1,\ldots,M_n\}=\{\mathcal C(R_i,R_{i+1})\cap R\mid i\in\{0, \ldots,p-1\}\}=\{(R_i:R_{i+1})\cap R\mid i\in\{0,\ldots,p-1\}\}$. An easy induction using \cite[Lemma 3.3]{DPP2}, shows that we can exhibit $R$-subextensions of $S$ such that $R=R'_0\subset\cdots\subset R' _j\subset\cdots\subset R'_n=S$,  $R'_j\subset R'_{j+1}$ is t-closed for each $j\in\{0,\ldots,n-1\}$ and satisfies $(R'_j:R'_{j+1})\cap R=M_{j+1}$. This is obvious for $j=0$. But, since $R\subset R'_1$ is t-closed and integral, for each $j\in\{2,\ldots,n\}$, there is a unique $M'_j\in\mathrm{Max}(R'_1)$ lying above $M_j$, and we have $\mathrm{MSupp}(S/R'_1)=\{M'_2,\ldots,M'_n\}$. Then, for each $j\in\{1,\ldots,n\}$, we have $[R_ {M_j},S_{M_j}]=[(R'_{j-1})_{M_j},(R'_j)_{M_j}]$ since $R_{M_j}=(R'_{j-1})_{M_j}$ and $(R'_j)_{M_j}=S_{M _j}$. It follows that $\ell[R_{M_j},S_{M_j}]=\Lambda(S_{M_j}/R_{M_j})$, so that $\ell[R,S]=\sum_{j=1}^n\ell [R_{M_j},S_{M_j}]=\sum_{j=1}^n\Lambda(S_{M_j}/R_{M_j})=\sum_{j=1}^n\Lambda((R'_j)_{M_j}/(R'_{j-1}) _{M_j})=\sum_{j=1}^n\Lambda(R'_j/R'_{j-1})$ because $(R'_j)_{M}=(R'_{j-1})_{M}$ for any $M\neq M_j$. To end, let $Q\in\mathrm{Spec}(S)$ and set $P:=Q\cap R$. If $P\not\in\mathrm{MSupp}(S/R)$, we get that $R_P=S_P=S_Q$, so that $k(P)=k(Q)$. If $P\in\mathrm{MSupp}(S/R)$, then $Q$ is the only prime ideal of $S$ lying over $P$, so that $S_P=S_Q$ and $[k(P),k(Q)]=[R/P,S/Q]$. It follows that $\Lambda(S/R)=\sup_{j\in\{1,\ldots,n\}}\Lambda(S_{M_j}/R_{M_j})=\sup_{j\in\{1,\ldots,n\}}\ell[R_{M_j},S_{M_j}]$. 

For each $M\in\mathrm{MSupp}(S/R)$, we have $ \ell[R_M,S_M]=\Lambda(S_M/R_M)\leq \Lambda(S/R)$, so that $(*)$ gives $\ell[R,S]\leq n\Lambda (S/R)$.
\end{proof}

Coming back to the Nagata ring extension, we get the following theorem.

\begin{theorem}\label{3.7} Let $R\subset S$ be an FCP extension. Then $\Lambda (S/R)=\Lambda (S(X)/R(X))$.
\end{theorem}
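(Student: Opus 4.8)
The plan is to reduce the claim $\Lambda(S/R)=\Lambda(S(X)/R(X))$ to the t-closed case already analyzed, then exploit the explicit description of $\Lambda$ given in Proposition~\ref{3.6}. First I would apply Proposition~\ref{3.5} to both extensions: since $R\subseteq S$ has FCP, so does $R(X)\subseteq S(X)$ by \cite[Theorem 3.9]{DPP3}, and therefore $\Lambda(S/R)=\Lambda(\overline R/{}_S^tR)$ while $\Lambda(S(X)/R(X))=\Lambda(\overline{R(X)}/{}_{S(X)}^tR(X))$. The compatibility of the canonical decomposition with the Nagata construction gives ${}_{S(X)}^tR(X)=({}_S^tR)(X)$ (used already in the proof of Theorem~\ref{3.3}) and $\overline{R(X)}=(\overline R)(X)$, so the problem reduces to proving $\Lambda(\overline R/{}_S^tR)=\Lambda((\overline R)(X)/({}_S^tR)(X))$, i.e. to the case of an FCP integral t-closed extension $R\subset S$.

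Next I would invoke Proposition~\ref{3.6}, which identifies $\Lambda(S/R)=\sup_{M\in\mathrm{MSupp}(S/R)}\ell[R_M,S_M]$ for such an extension. Applying the same proposition to $R(X)\subset S(X)$ yields $\Lambda(S(X)/R(X))=\sup_{N\in\mathrm{MSupp}(S(X)/R(X))}\ell[R(X)_N,S(X)_N]$. The crucial structural input is that the maximal support of the Nagata extension is parametrized by that of the original one, via $M\mapsto MR(X)$: by \cite[Lemmata 3.3 and 3.15]{DPP3} and the t-closure computation, $\mathrm{MSupp}(S(X)/R(X))=\{MR(X)\mid M\in\mathrm{MSupp}(S/R)\}$, exactly as was used in the proof of Corollary~\ref{2.6}. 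So it suffices to match the local lengths $\ell[R(X)_{MR(X)},S(X)_{MR(X)}]$ with $\ell[R_M,S_M]$ for each $M$.

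For this last matching I would localize and reduce to a quasi-local t-closed situation, then run the identification already carried out inside the proof of Corollary~\ref{2.6}: when $(R,M)$ is quasi-local one has $M=(R:S)\in\mathrm{Max}(S)$, hence $MR(X)=(R(X):S(X))=MS(X)$, and \cite[Proposition 3.7]{DPP2} together with Theorem~\ref{2.5} gives $\ell[R_M,S_M]=\ell[R/M,S/M]=\ell[(R/M)(X),(S/M)(X)]=\ell[R(X)_{MR(X)},S(X)_{MR(X)}]$. Taking suprema over $\mathrm{MSupp}(S/R)$ through the bijection $M\leftrightarrow MR(X)$ then yields $\Lambda(S/R)=\Lambda(S(X)/R(X))$.

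The main obstacle I anticipate is making the localization step rigorous, since $\Lambda$ is defined through \emph{all} residual extensions rather than only through lengths, and one must be sure that passing to the Nagata ring neither creates new residual field extensions nor alters the degrees of existing ones. The safe route is to lean on the already-established equality $\Lambda=\sup_M\ell[R_M,S_M]$ from Proposition~\ref{3.6}, which converts the residual-length supremum into a computation of module lengths that are demonstrably preserved under $R\mapsto R(X)$; the remaining care is purely bookkeeping over the finite set $\mathrm{MSupp}(S/R)$ and its image under $M\mapsto MR(X)$.
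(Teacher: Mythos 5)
Your proposal is correct and follows essentially the same route as the paper's own proof: reduction via Proposition~\ref{3.5} to the t-closed part $\,{}_S^tR\subseteq\overline R$, compatibility of $t$-closure and integral closure with the Nagata construction, then Proposition~\ref{3.6} together with the correspondence $M\mapsto MR(X)$ on $\mathrm{MSupp}$ and the identification $R(X)_{MR(X)}=R_M(X)$ to match the local lengths. The only cosmetic difference is that you re-run the quasi-local argument from the proof of Corollary~\ref{2.6} to compare $\ell[R_M,S_M]$ with $\ell[R(X)_{MR(X)},S(X)_{MR(X)}]$, whereas the paper simply cites the already-established equality $\ell[R'_M,S'_M]=\ell[R'_M(X),S'_M(X)]$.
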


\begin{proof}  We get
$\Lambda(S(X)/R(X))=\Lambda (\overline {R(X)}/{}_{S(X)}^tR(X))$ and $\Lambda(S/R)=\Lambda (\overline R/{}_S^tR)$  from Proposition \ref{3.5} and $\overline {R(X)}=\overline R(X)$ and ${}_{S(X)}^tR(X)=({}_S^tR)(X)$ from \cite[Proposition 3.8 and Lemma 3.15]{DPP3}. To make easier the reading,we  set $R':={}_S^tR$ and $S':=\overline R$. Proposition \ref{3.6} gives 
$$\Lambda (S'/R')=\sup_{M\in\mathrm{MSupp}(S'/R')}\ell[R'_M,S'_M]$$ 
and 
$$\Lambda (S'(X)/R'(X))=\sup_{M'\in\mathrm{MSupp}(S'(X)/R'(X))}\ell[R'(X)_{M'},S'(X)_{M'}]$$
 Now, we have the following results: $\ell[R'_M,S'_M]=\ell[R'_M(X),S'_M(X)],$
 
\noindent $\mathrm{MSupp}(S'(X)/R'(X))=\{MR'(X)\mid M\in\mathrm{MSupp}(S'/R')\}$ (see the proof of Corollary \ref{3.5}) and, for $M'\in\mathrm{MSupp}(S'(X)/R'(X)),\ M\in\mathrm{MSupp}(S'/R')$ such that $M'=MR'(X)$, we have $R'(X)_{M'}=R'_M(X)$ and $S'(X)_{M'}=S'_M(X)$. Then, $\ell[R'_M,S'_M]=\ell[R'(X)_{M'},S'(X)_{M'}]$, giving $\Lambda (S/R)=\Lambda (S(X)/R(X))$.
\end{proof}

\section { On some new properties of FIP extensions}

In \cite[Theorem 3.30]{DPP3}, we got the following result: Let $(R,M)$ be a quasi-local ring and $R\subset S$ a subintegral extension. Put $R_i:=R+SM^{i}$ and $M_i:=M+SM^{i}$ for each $i>0$. Then, $R(X)\subset S(X)$ has FIP if and only if $R_2 \subseteq S$ is chained and ${\mathrm L}_R((SM)/M)=n-1$, where $n:=\nu(R/(R:S))$ is the index of nilpotency of $M/(R:S)$ in $R/(R:S)$. When $|R/M|=\infty$, these conditions are equivalent to $R\subset S$ has FIP. We intend to establish a more agreeable characterization. Before that, we reprove part of \cite[Lemma 5.12]{DPP2} under weaker assumptions that are enough for our purpose.
  
\begin{lemma}\label{4.1} Let $(R,M)$ be a quasi-local Artinian ring which is not a field and let $n$ be the index of nilpotency of $M$ in $R$. Let $R\subset S$ be a finite subintegral extension such that $(R:S)=0$.  Set $R_i:=R+SM^{i}$ and $M_i:=M+SM^{i}$ for $i\in\{0,1,\ldots,n\}$. Then $R\subset S$ has FCP. Moreover,   the following conditions are equivalent:
\begin{enumerate}

\item    ${{\mathrm  L}}_R(SM/M)=n-1$.

\item  ${{\mathrm  L}}_R(M_i/M_{i+1})=1$ for all $i=1,\ldots, n-1$. 

\item$R \subseteq  R_1$ is chained.
\end{enumerate}
\end{lemma}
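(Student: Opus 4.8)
```latex
The plan is to work in the quasi-local Artinian setting and first dispose of the FCP claim, then establish the equivalences by a careful analysis of the filtration $S = R_0 \supseteq R_1 \supseteq \cdots \supseteq R_n$ and the associated chain of ideals $M_i = M + SM^i$. Since $R \subset S$ is finite and $R$ is Artinian, $S$ is a finite $R$-module, hence Artinian, so $R \subset S$ automatically has FCP; this is the easy part. The substance lies in understanding the module structure of the successive quotients $M_i/M_{i+1}$ and relating them to the single quotient $SM/M$.

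First I would fix notation and record the basic inclusions. Because $R \subset S$ is subintegral with $(R:S)=0$, and $M$ is the maximal ideal of $R$ with $M^n = 0$, $M^{n-1} \neq 0$, I would verify that $SM^n = 0$ (using $M^n=0$ and that $S$ is generated over $R$), so the filtration $R_i := R + SM^i$ stabilizes: $R_n = R$ and $R_1 = R + SM$. The key structural input is that each $R_i$ is an $R$-subalgebra of $S$ with $R_i \supseteq R_{i+1}$, and that $M_i = M + SM^i$ is an ideal. I would then compute the graded pieces: the natural map $SM^i/SM^{i+1}$ governs $M_i/M_{i+1}$, and I expect an isomorphism $M_i/M_{i+1} \cong SM^i/(SM^{i+1} + (M \cap SM^i))$ or a closely related identification, which reduces the length computations to the $R$-module structure of $SM^i/SM^{i+1}$.

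For the equivalences themselves, the cleanest route is to prove the cyclic chain of implications, or better, to compute $\mathrm{L}_R(SM/M)$ as a telescoping sum. The point is that
$$
\mathrm{L}_R(SM/M) = \sum_{i=1}^{n-1} \mathrm{L}_R(M_i/M_{i+1}),
$$
which follows from the filtration $SM = M_1 \supseteq M_2 \supseteq \cdots \supseteq M_n = M$ (here $M_n = M + SM^n = M$ since $SM^n=0$) together with additivity of length on the short exact sequences $0 \to M_{i+1}/M_n \to M_i/M_n \to M_i/M_{i+1} \to 0$. Given this identity, the implication (2) $\Rightarrow$ (1) is immediate, as the sum of $n-1$ ones equals $n-1$. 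For (1) $\Rightarrow$ (2) I would show that each term $\mathrm{L}_R(M_i/M_{i+1}) \geq 1$; since $M_i \neq M_{i+1}$ must be argued from $SM^i \neq SM^{i+1}$ for $i < n$ (which uses subintegrality and $(R:S)=0$ to prevent premature collapse), the only way the sum of $n-1$ terms each at least $1$ can equal $n-1$ is if every term is exactly $1$.

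The remaining equivalence with (3) is where I expect the main obstacle. The extension $R \subseteq R_1$ is chained precisely when $[R, R_1]$ is totally ordered, and I would connect this to condition (2) by identifying the intermediate algebras of $R \subseteq R_1$ with the terms $R_i$ of the filtration. The natural candidates for a maximal chain are $R = R_n \subset R_{n-1} \subset \cdots \subset R_1$, and I would argue that each step $R_{i+1} \subset R_i$ is minimal exactly when $\mathrm{L}_R(M_i/M_{i+1}) = 1$, using the ramified/decomposed dichotomy from Theorem~\ref{1.4} together with subintegrality (which forces the ramified case). The hard direction is showing that chainedness of $R \subseteq R_1$ forces the filtration $(R_i)$ to be the \emph{only} maximal chain, so that no extra intermediate algebras can appear; this requires ruling out that some $R_{i+1} \subset R_i$ splits into two or more minimal steps, which is exactly the failure of $\mathrm{L}_R(M_i/M_{i+1}) = 1$. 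I would handle this by a local-structure argument: if some graded piece has length $\geq 2$, one can produce two incomparable intermediate algebras between $R_{i+1}$ and $R_i$ (using the two-dimensional $R/M$-vector space structure of the relevant quotient), contradicting chainedness. Conversely, if (2) holds, every step is minimal and the $R_i$ form a saturated chain, and one checks no algebra can lie off this chain because any $T \in [R,R_1]$ is determined by its intersections $T \cap M_i$, forcing $T = R_i$ for some $i$.
```
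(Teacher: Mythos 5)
Your plan follows the paper's architecture quite closely: FCP from the finiteness of $S$ as a module over the Artinian ring $R$, the telescoping identity $\mathrm{L}_R(SM/M)=\sum_{i=1}^{n-1}\mathrm{L}_R(M_i/M_{i+1})$ with each summand at least $1$ for (1)$\Leftrightarrow$(2), and, for the contrapositive of (3)$\Rightarrow$(2), the construction of two incomparable algebras $R+Q'$ and $R+Q''$ from a two-dimensional $R/M$-quotient of some $M_k/M_{k+1}$ --- all of this is exactly what the paper does. One minor inaccuracy: $M_i\neq M_{i+1}$ does \emph{not} follow from $SM^i\neq SM^{i+1}$ as you suggest; the correct argument (the paper's) is that $M_i=M_{i+1}$ gives $SM^i\subseteq M+SM^{i+1}$, and multiplying by $M^{n-i-1}$ yields $0\neq M^{n-1}\subseteq (R:S)=0$, an absurdity. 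So it is the vanishing of the conductor, not subintegrality, that prevents the collapse, and the intermediate statement you propose to route through is not the right one.

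The genuine gap is in (2)$\Rightarrow$(3). You assert that any $T\in[R,R_1]$ ``is determined by its intersections $T\cap M_i$, forcing $T=R_i$ for some $i$.'' The first half is easy: since $R_1=R+M_1$, one has $T=R+(T\cap M_1)$, so $T\mapsto T\cap M_1$ is injective. But injectivity in no way forces the image to lie in $\{M_1,\dots,M_n\}$: condition (2) only says the $M_i$ form a composition series of $M_1/M$, and a module with a composition series of length $n-1$ can have many submodules not appearing in that series (already a two-dimensional vector space does). Ruling out such extra submodules, equivalently extra algebras, \emph{is} the content of this direction, and your proposal contains no argument for it. The paper closes this gap with a different mechanism: given $T$ off the chain, choose $k$ maximal with $T\subset R_k$, use FCP to find $T'\in[T,R_k]$ with $T'\subset R_k$ minimal (necessarily ramified, being subintegral, by Theorem~\ref{1.4}), and then exploit that $\mathrm{L}_{R_k/M_k}(M_k/M_{k+1})=1$ makes $M_{k+1}$ and $M_k$ \emph{adjacent} as ideals of $R_k$; since the conductor $M'=(T':R_k)$ satisfies $M_{k+1}\subseteq M'\subset M_k$, this pins down $M'=M_{k+1}$, whence $R_{k+1}=R+M'\subseteq T'\subset R_k$ and minimality gives $T'=R_{k+1}$, contradicting $T\not\subseteq R_{k+1}$. (Alternatively, your ``forcing'' step can be repaired purely module-theoretically: from the identity $M_{j+1}=M+MM_j$ one shows by descending induction that any $R$-submodule $N$ with $M\subseteq N\subseteq M_k$, $N\not\subseteq M_{k+1}$ satisfies $M_j\subseteq N+M_{j+1}$ for all $j\geq k$, hence $M_{k+1}\subseteq N+M_n=N$ and $N=M_k$; thus (2) makes $M_1/M$ uniserial and $T\cap M_1$ must be some $M_i$. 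But some such argument must be supplied; as written, your key step is an assertion, not a proof.)
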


\begin{proof} First, we may remark that $R\subset S$ has FCP in view of \cite[Theorem 4.2]{DPP2}. Next, $(R_i,M_i)$ is quasi-local for all $i=1,\ldots, n$, because $R\subset S$ is subintegral and $R_i/M_i=(R+S M^{i})/(M+SM^{i})\cong R/[R\cap(M+SM^{i})]=R/M=:K$, which is a field. Moreover, for $1\leq i<n$, we have $M_i\neq M_{i+1}$ (for if not, we would have $SM^i\subseteq M+SM^{i+1}$ and multiplication by $M ^{n-i-1}$ would lead to $SM^{n-1}\subseteq M^{n-i}\subset R$ and $0\neq M^{n-1}\subseteq(R:S)=0$, an absurdity). It follows that $R_i\neq R_{i+1}$. Then $MR_i=M_{i+1}=(R_{i+1}:R_i)$; note also that $M_i^2\subseteq M_{i+1} \subset  M_i$.

(1) $\Leftrightarrow$ (2). Since $M_1=SM$ and $M_n=M$, we get $\sum_{i=1}^{n-1}{\mathrm  L}_R(M_i/M_{i+1})$

\noindent $={\mathrm L}_R(SM/M)$. Also, if $i=1,\ldots,n-1$, then $M_i\neq M_{i+1}$, and so ${\mathrm L}_R(M_i/M_{i+1})\geq 1$. Thus, ${\mathrm L}_R(SM/M)\geq n-1$, with equality if and only if ${\mathrm L}_R(M_i/M_{i+1})=1$ for all $i=1,\ldots, n-1$.

(2) $\Rightarrow$ (3). Assume that ${\mathrm L}_R(M_ i/M_{i+1})=1$ for all $i=1,\ldots,n-1$. Since $MM_i\subseteq M_{i+1}$ and $K=R/M$, we have ${\mathrm L}_R(M_i/M_{i+1})={\mathrm L}_{R/M}(M_i/M_{i+ 1})=\dim_K(M_i/M_{i+1})$. It follows that $\dim_K(R_i/M_{i+1})=\dim_K(R_i/M_i)+\dim_K(M_i/M_{i+1})=1+ 1=2$, and so we deduce from Theorem \ref{1.2}(c) that $R_{i+1}\subset R_i$ is a ramified (minimal) extension. We get a maximal chain $R=R_n\subset R_{n-1}\subset\, \cdots\, \subset R_2\subset R_1$. We will show that there cannot exist some $T\in[R,R_1]\setminus\{R_i\}_{i=1}^{n}$. Deny and let $k:=\max\{i\in\{1,\ldots,n-1\}\mid T\subset R_i\}$. As $T\not\subseteq R_{k+1}$, we can use FCP to find some $T'\in [T,R_k]$ such that $T'\subset R_k$ is a minimal extension. This minimal extension must be ramified because it is subintegral. Note that $T'\neq R_{k+1} $ and $M':=(T':R_k)$ is a maximal ideal of $T'$ with $ M'\cap R=M$. As $M_{k+1}=MR_k\subseteq M'R_k=M'\subset M_k$, we have $M_{k+1}\subseteq M' \subset M_k$. Since $1={\mathrm L}_R(M_k/M_{k+1})={\mathrm L}_{R/M}(M_k/M_{k+1})={\mathrm L}_ {R_k/M_k}(M_k/M_{k+1})$, the ideals $M_{k+1}$ and $M_ k$ of $R_k$ must be adjacent. Hence $M'=M_ {k+1}$. But $R_{k+1}=R+M_{k+1}=R+M'\subseteq T'\subset R_k$, and so the minimality of $R_{k+1}\subset R_k$ yields that $T'=R_{k+1}$, the desired contradiction.

(3) $\Rightarrow$ (2). In fact, we are going to show that if there exists $k\in\{1,\ldots,n-1\}$ such that ${\mathrm L}_R(M_k/M_{k+1})>1$, then $[R,R_1]$ is not linearly ordered. By \cite[ Proposition 4.7(a)]{DPP2}, we have that ${\mathrm L}_R(M_k/M_{k+1})\leq{\mathrm L}_R(M_1/M)={\mathrm L}_R(R_1/R)$ is finite. But we have ${\mathrm L}_R(M_k/M_{k+1})={\mathrm L}_{R/M}(M_k/M_{k+1})= {\mathrm L}_ {R_ k/M_k}(M_k/M_{k+1})$, which is finite. Thus, there exists an $R_k$-submodule $Q$ of $M_k$ containing $M_{k+1}$ such that ${\dim}_K(Q/M_{k+1})$

\noindent $={\dim}_K(M_k/M_{k+1})-2$. Hence ${\dim}_K(M_k/Q)=2$ and $M_k/Q$ has at least two distinct one-dimensional $K$-vector subspaces of the form $Q'/Q$ and $Q''/Q$, where $Q',Q''$ are appropriate ideals of $R_k$ that contains $Q$. Moreover, they are incomparable. Since $Q'$ and $Q''$ contain $M_{k+1}$, we have $Q'\cap R=Q''\cap R=M$. Set $T':=R+Q',\ T'':=R+Q''\subseteq R_k$. It follows that $Q'$ (resp. $Q''$) is the unique maximal ideal of $T'$ (resp. $T''$). Assume, for instance, that $T' \subset T''$. Then, $Q'\subset Q''$, a contradiction. It follows that $[R, R_1]$ is not linearly ordered.
\end{proof}

We can now offer a nicer form of \cite[Theorem 3.30]{DPP3}
  
  \begin{theorem}\label{4.3} Let $R\subset S$ be a subintegral extension. The following statements are equivalent:
\begin{enumerate}

\item  $R(X)\subset S(X)$ has FIP.

\item  $R\subset S$ has FIP and is arithmetic.
\end{enumerate}
\end{theorem}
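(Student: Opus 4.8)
The plan is to reduce the global statement to a quasi-local one and then to apply the sharpened form of \cite[Theorem 3.30]{DPP3} together with Lemma~\ref{4.1}. First I would dispose of the ambient finiteness: since FIP forces FCP, in either direction I may assume that $R\subset S$ (hence $R(X)\subset S(X)$, by \cite[Theorem 3.9]{DPP3}) has FCP, and being subintegral it is integral with $\mathrm{L}_R(S/R)<\infty$. Using the localization techniques already exploited in Section~3 --- the surjections $[R,S]\to[R_M,S_M]$ and the decomposition of an integral FCP extension over the finite set $\mathrm{MSupp}(S/R)$ --- together with the identifications $\mathrm{MSupp}(S(X)/R(X))=\{MR(X)\}$, $R(X)_{MR(X)}=R_M(X)$ and $S(X)_{MR(X)}=S_M(X)$, both the FIP of $R(X)\subset S(X)$ and the conjunction ``$R\subset S$ has FIP and is arithmetic'' are equivalent to their analogues holding for each $R_M\subset S_M$; recall that \emph{arithmetic} means exactly that each $R_M\subset S_M$ is chained. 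Thus it suffices to treat the quasi-local case, where statement (2) reads simply ``$R\subset S$ is chained'' (FIP being automatic for an FCP chain).

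Next I would normalize the conductor. Writing $C:=(R:S)$, the canonical bijection $[R,S]\cong[R/C,S/C]$ transports chainedness, FIP and arithmeticity, and one checks $[R(X),S(X)]\cong[(R/C)(X),(S/C)(X)]$; since $R/C$ is Artinian local and may be assumed not a field (otherwise $R=S$), I may assume $(R:S)=0$ and place myself exactly in the hypotheses of Lemma~\ref{4.1}, with $n=\nu(M)$ the index of nilpotency of $M$. Now \cite[Theorem 3.30]{DPP3} asserts that $R(X)\subset S(X)$ has FIP if and only if $R_2\subseteq S$ is chained and $\mathrm{L}_R(SM/M)=n-1$, while Lemma~\ref{4.1} turns the latter equality into ``$R\subseteq R_1$ is chained''. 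Hence the quasi-local theorem becomes the purely lattice-theoretic assertion
\[
R\subset S\ \text{is chained}\iff R\subseteq R_1\ \text{and}\ R_2\subseteq S\ \text{are both chained}.
\]
The implication $\Rightarrow$ is immediate, since subextensions of a chain are chains, so everything comes down to $\Leftarrow$.

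For $\Leftarrow$ the key observation is that every $T\in[R,S]$ is comparable to $R_1=R+SM$. Indeed, set $U:=T+SM^2\in[R_2,S]$; as $[R_2,S]$ is a chain and $R_1\in[R_2,S]$, the rings $U$ and $R_1$ are comparable. If $U\subseteq R_1$ then $T\subseteq R_1$. If instead $R_1\subseteq U$, i.e. $SM\subseteq T+SM^2$, then a Nakayama-type induction gives $SM\subseteq T+SM^k$ for all $k$: from $SM\subseteq T+SM^k$ one gets $SM^2\subseteq TM+SM^{k+1}\subseteq T+SM^{k+1}$, whence $SM\subseteq T+SM^2\subseteq T+SM^{k+1}$; taking $k=n$ and using $SM^n=0$ yields $SM\subseteq T$, that is $R_1\subseteq T$. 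Thus $[R,S]=[R,R_1]\cup[R_1,S]$, where $[R,R_1]$ is a chain by hypothesis and $[R_1,S]\subseteq[R_2,S]$ is a chain; since every member of the first lies in $R_1$ and every member of the second contains $R_1$, the two chains glue into a single chain, so $R\subset S$ is chained, as wanted.

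I expect the genuine obstacle to lie not in this quasi-local computation but in making the first reduction step airtight: one must know that for an integral FCP extension the assignment $T\mapsto(T_M)_{M\in\mathrm{MSupp}(S/R)}$ is a bijection $[R,S]\to\prod_M[R_M,S_M]$ (so that FIP is detected locally and matches the factorwise nature of arithmeticity), and that this bijection is compatible with passage to Nagata rings. Granting the localization apparatus of Sections~2 and~3, the remaining ingredients are the short Nakayama estimate above and the bookkeeping with Lemma~\ref{4.1} and \cite[Theorem 3.30]{DPP3}.
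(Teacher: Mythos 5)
There is one genuine flaw, and it sits in your conductor normalization: the parenthetical claim that $R/C$ a field forces $R=S$ is false, and the case it is meant to exclude is not exotic but ubiquitous. By Theorem~\ref{1.4}, every ramified minimal extension --- the simplest kind of subintegral extension, e.g. $K\subset K[y]/(y^2)$ for a field $K$ --- has conductor equal to the maximal ideal $M$, so $R/C$ is a field while $R\neq S$. In that case Lemma~\ref{4.1}, which you use as the dictionary between $\mathrm{L}_R(SM/M)=n-1$ and ``$R\subseteq R_1$ is chained'', is simply not available: its hypotheses explicitly require the quasi-local Artinian ring not to be a field. Since your whole quasi-local argument is a single chain of equivalences passing through that dictionary, it breaks down (in both directions) exactly when $(R:S)=M$. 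The repair is easy and is what the paper does at the end of its proof: if $(R:S)=M$ then $SM=M$, so $R_1=R_2=R$ and $n=1$, the two conditions of \cite[Theorem 3.30]{DPP3} collapse to ``$R\subseteq S$ is chained'' together with the vacuous equality $0=0$, and the quasi-local equivalence holds trivially. But as written your proof is incomplete; this case must be treated separately rather than asserted away.

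Apart from that, the argument is correct, and your route through the quasi-local case is genuinely different from the paper's. The paper proves (1) $\Rightarrow$ (2) without Lemma~\ref{4.1} at all: it reduces modulo the conductor of $R(X)\subseteq S(X)$, notes that the residue field $(R/M)(X)$ is infinite, and invokes \cite[Proposition 5.15]{DPP2} (and \cite[Theorem 3.8]{ADM} when the conductor is maximal) to conclude that $[R(X),S(X)]$, hence $[R,S]$, is a chain; Lemma~\ref{4.1} enters only in the direction (2) $\Rightarrow$ (1). You instead make both directions flow from \cite[Theorem 3.30]{DPP3} and Lemma~\ref{4.1} alone, by proving the purely lattice-theoretic gluing statement: if $[R,R_1]$ and $[R_2,S]$ are chains, then every $T\in[R,S]$ is comparable to $R_1$ (via $U:=T+SM^2\in[R_2,S]$ and your induction $SM\subseteq T+SM^k$, which works because $TM\subseteq T$ and $SM^n=0$), so that $[R,S]=[R,R_1]\cup[R_1,S]$ is a chain. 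I checked this gluing argument; it is correct, and it is a nice self-contained replacement for the appeal to \cite[Proposition 5.15]{DPP2} and \cite{ADM}. Note, however, that those citations are precisely what absorb the conductor-field case automatically in the paper's proof, which is why your more economical route has to pay for that case by hand.
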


\begin{proof} Under each statement, $R\subset S$ has FCP, so that $|\mathrm{MSupp}(S/R)|<\infty$ \cite[Theorem 3.9]{DPP3} and \cite[Corollary 3.2]{DPP2}. In view of \cite[Proposition 3.7]{DPP2}, $R\subset S$ has FIP if and only if $R_M\subset S_M$ has FIP for each $M\in\mathrm{MSupp}(S/R)$ and $ R\subset S$ has FCP. In the same way, $R(X)\subset S(X)$ has FIP if and only if $R_M(X)\subset S_M (X)$ has FIP for each $M\in\mathrm{MSupp}(S/R)$ and $R(X)\subset S(X)$ has FCP, because of \cite[Lemma 3.16]{DPP3}. It follows that we may reduce to the case where $(R,M)$ is a quasi-local ring, so that $(R(X),MR(X))$ is a quasi-local ring. In this situation, we claim that $R(X)\subset S(X)$ has FIP
if and only if  $R\subset S$ has FIP and  is   chained.

Assume first that $R(X)\subset S(X)$ has FIP. Then, $R\subset S$ has FIP by \cite[Theorem 3.30]{DPP3}. Moreover, $|R(X)/MR(X)|=|(R/M)(X)|=\infty$. Set $C':=(R(X):S(X)),\ R':=R(X)/C',\ M':=MR(X)/C'$  and $S':= S(X)/C'$. Then, $R'\subset S'$ has FIP, $R'$ is a quasi-local Artinian ring with $(R':S')=0$ and $|R'/M'|=\infty$. Assume first that $M'\neq 0$, so that $R'$ is not a field. In view of \cite[Proposition 5.15]{DPP2}, we get that $[R',S']$ is a chain. Assume now that $M'= 0$, so that $R'$ is an infinite  field. Since $ R'\subset S'$ has FIP, it follows from \cite[Theorem 3.8 and proof of Lemma 3.6]{ADM} that $[R',S']$ is  a chain. In both cases $[R',S']$ is  a chain, and so are $[R(X),S(X)]$ and $[R,S]$ by \cite[Lemma 3.1(d)]{DPP3}. 
 
Conversely, assume that $R\subset S$ has FIP and  is  chained. Set $C:=(R:S),\ R'':=R/C,\ M'':=M/C$ and $S'':=S/C$. Then, $R''\subset S''$ has FIP and is chained, $R''$ is a quasi-local Artinian ring and $(R'':S'')=0$. Assume that $R''$ is not a field. Using Lemma \ref{4.1} and its notation, we get that $[R''_2,S'']$ is  a chain, and so is $[R_2,S]$. Since $[R'',R_1'']$ is also a chain, we get that ${\mathrm L}_{R''}(S''M''/M'')=n-1$, where $n$ is the index of nilpotency of $ M''$ in $R''$. But ${\mathrm L}_{R''}(S''M''/M'')={\mathrm L}_R(S''M''/M'')= {\mathrm L}_R(SM/M)$, because of \cite[Corollary 2 of Proposition 24, page 66]{N}. Moreover, $n$ is the index of nilpotency of $ M/C$ in $R/C$. Then, we can use \cite[Theorem 3.30]{DPP3} to get that $R(X)\subset S(X)$ has FIP. Assume now that $R''$ is a field, so that $(R:S)=M$. Then, $SM=M$ gives $R_1=R_2=R$, and $n=1$ implies that ${\mathrm L}_R(SM/M)=0$ is satisfied. And \cite[Theorem 3.30]{DPP3} gives again the result.
\end{proof}

\begin{corollary}\label{4.3.1} Let $R \subseteq S$ be an FIP ring extension. Then $R(X) \subseteq S(X)$ has FIP if and only if $R \subseteq  {}^+_S R$ is arithmetic. In that case $|[R(X),S(X)]| = |[R,S]|$.
\end{corollary}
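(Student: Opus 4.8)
The plan is to reduce everything to the subintegral part of the canonical decomposition $R \subseteq {}^+_S R \subseteq {}_S^t R \subseteq \overline R \subseteq S$ and then quote Theorem~\ref{4.3}. First I would record that, since $R \subseteq S$ has FIP, it has FCP and every subextension again has FIP; in particular the subintegral extension $R \subseteq {}^+_S R$ has FIP, because $[R,{}^+_S R] \subseteq [R,S]$ is finite. The governing input is \cite[Theorem 3.21]{DPP3}, which isolates the subintegral part as the sole obstruction to the transfer of FIP: for an FIP extension $R \subseteq S$, the Nagata extension $({}^+_S R)(X) \subseteq S(X)$ always has FIP, so that $R(X) \subseteq S(X)$ has FIP \emph{if and only if} $R(X) \subseteq ({}^+_S R)(X)$ has FIP.

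Next I would apply Theorem~\ref{4.3} to the subintegral extension $R \subseteq {}^+_S R$. It yields that $R(X) \subseteq ({}^+_S R)(X)$ has FIP if and only if $R \subseteq {}^+_S R$ has FIP and is arithmetic. Since $R \subseteq {}^+_S R$ automatically has FIP as a subextension of $R \subseteq S$, this condition collapses to $R \subseteq {}^+_S R$ being arithmetic. Chaining the two equivalences delivers the stated characterization: $R(X) \subseteq S(X)$ has FIP if and only if $R \subseteq {}^+_S R$ is arithmetic.

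For the final clause, I would assume $R(X) \subseteq S(X)$ has FIP, so $|[R(X),S(X)]|$ is finite, and then invoke the transfer result proved with D. Dobbs in \cite{DPP3} (the finite-cardinality case recalled in the abstract), which gives $|[R,S]| = |[R(X),S(X)]|$ precisely when the Nagata cardinal is finite.

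The step requiring the most care is the reduction through \cite[Theorem 3.21]{DPP3}: I must make sure that, under the FIP hypothesis on $R \subseteq S$, the FIP property of $R(X) \subseteq S(X)$ is genuinely \emph{equivalent} to that of $R(X) \subseteq ({}^+_S R)(X)$, and not merely a one-sided implication. This hinges on the unconditional transfer of FIP to the Nagata extension for the seminormal and higher parts ${}^+_S R \subseteq S$, which is exactly what justifies localizing the arithmetic requirement to the subintegral stratum; once this is in hand, the rest is a bookkeeping application of Theorem~\ref{4.3} together with the known cardinality-preservation statement.
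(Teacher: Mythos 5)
Your proposal is correct and follows essentially the same route as the paper's own proof: reduce to the subintegral part via \cite[Theorem 3.21]{DPP3}, apply Theorem~\ref{4.3} to $R \subseteq {}^+_S R$ (whose FIP is automatic as a subextension of an FIP extension), and quote the known cardinality-preservation result for the final clause. The only discrepancy is bibliographic: for that last clause the paper cites \cite[Theorem 32]{DPP4} rather than \cite{DPP3}, but this does not affect the mathematics.
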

\begin{proof} Use \cite[Theorem 3.21]{DPP3} which states that $R(X) \subseteq S(X)$ has FIP if and only if $R \subseteq S$ and $R(X) \subseteq  {}^+_S R(X)$ have FIP. Conclude with \cite[Theorem 32]{DPP4}.
\end{proof}

\begin{corollary}\label{4.3.2} Let $R \subseteq S$ be an FIP ring extension such that $|R/M|=\infty$ for each $M\in\mathrm {MSupp}( {}^+_S R/R)$. Then $R(X) \subseteq S(X)$ has FIP.  The result holds in particular when $|R/M|=\infty$ for each $M\in\mathrm {MSupp}(S/R)$.
\end{corollary}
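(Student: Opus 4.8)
The plan is to reduce the statement to a purely local chainedness assertion and then feed it into Corollary~\ref{4.3.1}. By that corollary, $R(X)\subseteq S(X)$ has FIP if and only if the extension $R\subseteq {}^+_S R$ is arithmetic, so the whole task becomes: under the hypothesis $|R/M|=\infty$ for every $M\in\mathrm{MSupp}({}^+_S R/R)$, show that $R\subseteq {}^+_S R$ is arithmetic. Note first that $R\subseteq {}^+_S R$ is FIP, being a subextension of the FIP extension $R\subseteq S$, and is subintegral by the very definition of the seminormalization.

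Next I would unwind the definition of arithmetic: it suffices to check that $R_M\subseteq ({}^+_S R)_M$ is chained for each $M\in\mathrm{MSupp}({}^+_S R/R)$, since for $M$ outside this support the localized extension is trivial. Fix such an $M$. Localization preserves both FIP and subintegrality, so $R_M\subseteq ({}^+_S R)_M$ is a subintegral FIP extension of the quasi-local ring $(R_M,MR_M)$, whose residue field $R_M/MR_M\cong R/M$ is infinite by hypothesis. Thus the problem is reduced to the assertion that a subintegral FIP extension of a quasi-local ring with infinite residue field is chained.

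This last point is precisely the phenomenon already exploited in the forward direction of Theorem~\ref{4.3}. Passing to the conductor quotient by $C:=(R_M:({}^+_S R)_M)$ yields a quasi-local Artinian ring $R_M/C$ with zero conductor and infinite residue field, over which the induced FIP extension is a chain: when $R_M/C$ is not a field this is \cite[Proposition 5.15]{DPP2}, and when it is a field this is \cite[Theorem 3.8 and proof of Lemma 3.6]{ADM}. Chainedness then lifts back through the conductor quotient by \cite[Lemma 3.1(d)]{DPP3}, so $R_M\subseteq ({}^+_S R)_M$ is chained. Hence $R\subseteq {}^+_S R$ is arithmetic, and Corollary~\ref{4.3.1} finishes the proof.

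For the final ``in particular'' assertion, the short exact sequence $0\to {}^+_S R/R\to S/R\to S/{}^+_S R\to 0$ of $R$-modules gives $\mathrm{Supp}({}^+_S R/R)\subseteq\mathrm{Supp}(S/R)$, hence $\mathrm{MSupp}({}^+_S R/R)\subseteq\mathrm{MSupp}(S/R)$, so the stronger hypothesis on $\mathrm{MSupp}(S/R)$ implies the one on $\mathrm{MSupp}({}^+_S R/R)$ and the main statement applies. The only substantive obstacle is the infinite-residue-field chainedness step; since it is already available through the cited results and mirrors the argument of Theorem~\ref{4.3}, the remaining work is just the bookkeeping of the reduction through Corollary~\ref{4.3.1}.
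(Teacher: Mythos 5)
Your proposal is correct and follows essentially the same route as the paper: reduce via Corollary~\ref{4.3.1} to showing that the subintegral FIP extension $R\subseteq {}^+_S R$ with infinite residue fields on its support is arithmetic, localize, pass to the zero-conductor quasi-local Artinian situation, and split into the non-field case (\cite[Proposition 5.15]{DPP2}) and the infinite-field case (\cite[Theorem 3.8 and Lemma 3.6]{ADM}). Your treatment is merely more explicit about the bookkeeping (the localization step, lifting chainedness through the conductor, and the inclusion $\mathrm{MSupp}({}^+_S R/R)\subseteq\mathrm{MSupp}(S/R)$ for the ``in particular'' claim), all of which the paper leaves implicit.
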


\begin{proof} It is enough to prove that  a subintegral FIP extension $R\subset S$ such that $|R/M|=\infty$ for each $M\in\mathrm {MSupp}(S/R)$ is arithmetic.  We can suppose that the conductor of $R\subseteq S$  is zero and  that $R$ is quasi-local,  with maximal ideal $M\in\mathrm{MSupp}(S/R)$.  It follows that $(R,M)$ is a quasi-local Artinian ring by \cite[Theorem 4.2]{DPP2}. Assume that $R$ is not a field. Then, $[R,S]$ is a chain by \cite[Proposition 5.15]{DPP2}. If $R$ is an infinite field, $S$ is of the form $R[\alpha]$, for some $\alpha\in S$ which satisfies $\alpha^3=0$ \cite[Theorem 3.8 (3)]{ADM}, since $R\subset S$ is subintegral. Then, $[R,S]$ is linearly ordered by the proof of \cite[Lemma 3.6 (b)]{ADM}.
\end{proof}

\begin{corollary}\label{4.3.3} Let $R\subseteq S$ be an extension, then $R(X_1,\ldots ,X_n) \subseteq S(X_1,\ldots ,X_n)$ has FIP for each integer $n\geq 0$ if and only if $R(X) \subseteq S(X)$ has FIP.
\end{corollary}

We come back to the example given in \cite[Example 3.12]{DPP3}, which shows that the arithmetic condition is necessary in Theorem \ref{4.3}.

\begin{example}\label{4.4} Let $K$ be a finite field and $T:=K[Y]/(Y^4)$. As $T$ is a finite-dimensional vector space over $K$, it follows from \cite[Theorem 3.8 (b)]{ADM} that the extension $K\subset T$ has FIP. Consider the extension $K(X)\subset T(X)$. We proved in \cite[Example 3.12]{DPP3} that $K(X)\subset T(X)$ cannot have FIP because $K(X)$ is an infinite field and $T(X)$ contains an element whose index of nilpotency is 4 since $T\to T(X)$ is injective. Another proof of this result can be given by  Theorem \ref{4.3}. Indeed, consider the coset $y:= Y+(Y^4)\in T=K[Y]/(Y^4)$. Put $S_1:=K [y^2]$ and $S_ 2:=K[y^3]$. We get that $K\subset T$ is a subintegral extension which has FIP, but $S_1$ and $S_2$ are incomparable and $K\subseteq T$ is not arithmetic. So, $K(X)\subset T(X)$ cannot have FIP.
\end{example}

\begin{remark}\label{4.5}  If  $R\subseteq S$ is not subintegral it may be that  the arithmetic condition   be superfluous. We proved that a seminormal extension $R\subseteq S$ has FIP if and only if $R(X)\subseteq S(X)$ has FIP  \cite[Corollary 3.20]{DPP3}. It is easy to exhibit  seminormal FIP  extensions $R \subset S $ with $R$ quasi-local  and $R\subseteq S$ non arithmetic (see  Example \ref{5.11}(5)). 
\end{remark}

In the next section we examine  the first  properties of  arithmetical extensions. The study will be strongly completed in a forthcoming paper.

\section{Elementary properties of arithmetical extensions}

 Using the language  and results of Knebusch and Zhang in \cite{KZ}, we are able to get a characterization of some arithmetic extensions.  We note here that  chained ring extensions $R \subseteq S$  are called $\lambda$-extensions by Gilbert \cite{G}. Knebusch and Zhang defined Pr\"ufer extensions in \cite{KZ}. It is now well known that $R\subseteq S$ is Pr\"ufer if and only if $(R,S)$ is a normal pair. We refer the reader to \cite{KZ} for the properties of Pr\"ufer extensions, noting only here that a ring extension 
 $R\subseteq S$ is Pr\"ufer if and only if $R\subseteq T$ is a flat epimorphism  for each $T\in [R,S]$. We recall some properties of a flat epimorphism $f: A \to B$ (see \cite[Chapter IV]{L}):
 
 \noindent Scholium
 
 (1) $\mathrm{Spec}(B)\to \mathrm{Spec}(A)$ is injective
 
 (2) $f$ is essential; that is, for any ring morphism $g: B\to C$, such that  $g\circ f$ is injective, then 
 $g$ is injective.
 
 (3) Each ideal $J$ of $B$ is of the form $J= f^{-1}(J)B$.
 
 (4) If $f$ is injective and $f$ is factored $A\to C \to B$, then $C\to B$ is a flat epimorphism, if it is injective.
 
 (5) The class of flat epimorphisms  is stable under base changes.
 
\noindent  We refer the reader to \cite{KZ} for the meaning of a Pr\"ufer-Manis extension, called also a PM-extension. The following proposition will be completed by Theorem \ref{5.140}.
 
\begin{proposition}\label{5.1} Let $R\subseteq S$ be an integrally closed extension. 
Then $R\subseteq S$ is arithmetic if and only if $R\subseteq S$ is locally Pr\"ufer-Manis. \end{proposition}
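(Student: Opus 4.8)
The plan is to exploit that both sides are local conditions on $R$ and then to settle the quasi-local case by identifying ``chained'' with ``Pr\"ufer--Manis'' for an integrally closed extension. First I would reduce to the quasi-local situation. By definition $R\subseteq S$ is arithmetic exactly when $R_M\subseteq S_M$ is chained for each $M\in\mathrm{Max}(R)$, while ``locally Pr\"ufer--Manis'' means that $R_M\subseteq S_M$ is PM for each such $M$; moreover integral closure commutes with localization, so $\overline R=R$ forces each $R_M$ to be integrally closed in $S_M$. Hence it is enough to prove, for a quasi-local $(R,M)$ with $R$ integrally closed in $S$, that $[R,S]$ is a chain if and only if $R\subseteq S$ is PM.

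For the implication PM $\Rightarrow$ chained I would invoke the structure theory of \cite{KZ}: a Pr\"ufer--Manis extension is Pr\"ufer, and the Manis valuation puts the elements of $[R,S]$ into an order-preserving correspondence with a totally ordered set, exactly as the overrings of a valuation ring form a chain. This yields at once that $[R,S]$ is a chain, and incidentally that $R$ is integrally closed in $S$, so the hypothesis is consistent.

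For the converse, assume $(R,M)$ quasi-local, $R$ integrally closed in $S$, and $[R,S]$ a chain. The key point is to show that $R\subseteq S$ is Pr\"ufer; granting this, a Pr\"ufer extension with quasi-local base ring is automatically Manis by \cite{KZ}, hence PM. To prove the Pr\"ufer property I would use the normal-pair criterion: $R\subseteq S$ is Pr\"ufer if and only if every $T\in[R,S]$ is integrally closed in $S$. So suppose some $T\in[R,S]$ admits $u\in S\setminus T$ integral over $T$. Comparability in the chain rules out $R[u]\subseteq T$, so $T\subsetneq R[u]$, whence $R[u]=T[u]$ is integral over $T$, while $R$ stays integrally closed in $R[u]$ since it is integrally closed in $S$. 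Note that $u$ cannot be integral over $R$, for otherwise $u\in\overline R=R\subseteq T$; I would then seek a contradiction from the chain structure of $[R,R[u]]$ against this integral dependence.

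The step I expect to be the main obstacle is precisely this last contradiction: turning ``$[R,S]$ chained, $R$ integrally closed in $S$, and $T\subsetneq R[u]$ integral over $T$'' into an impossibility. Writing the integral relation of $u$ over $T\subseteq R[u]$ as a polynomial relation over $R$ and trying to extract a monic one runs into the familiar obstruction that the relevant leading coefficient may lie in $M$; the genuine work is to use the chain, together with the fact that $M$ is the unique maximal ideal and that every monogenic subextension $R\subseteq R[x]$ is again chained and integrally closed, to force that coefficient to be a unit. I would therefore be prepared to bypass the normal-pair criterion and instead verify the Manis valuation-pair condition directly via \cite{KZ}: for each $x\in S\setminus R$ produce the element yielding the required Manis relation, thereby reducing the whole matter to the monogenic extensions $R\subseteq R[x]$.
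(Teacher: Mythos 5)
Your reduction to the quasi-local case is correct and matches what the paper does implicitly (arithmetic and ``locally PM'' are both checked at maximal ideals, and integral closure commutes with localization), and your easy direction (PM $\Rightarrow$ chained) is acceptable as a citation of Knebusch--Zhang. But the proposal has a genuine gap exactly where you flag it: the converse, namely that a quasi-local, integrally closed, chained extension $R\subseteq S$ is Pr\"ufer (hence PM). Your normal-pair argument gets as far as producing $T\subsetneq R[u]$ with $R[u]$ integral over $T$ and $u$ not integral over $R$, and then stops; no contradiction is actually derived, and your fallback (verifying the Manis valuation-pair condition element by element) is announced but not executed. This missing step is not a routine verification: since no FCP or FMC hypothesis is in force here, chains in $[R,S]$ may be infinite, so none of the minimal-extension machinery used elsewhere in the paper is available, and forcing the ``leading coefficient to be a unit'' is precisely the nontrivial valuation-theoretic content of the equivalence. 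In effect you are attempting to reprove, from scratch, the theorem of Knebusch and Zhang \cite[Theorem 3.1, p.~187]{KZ} which states that an extension is Pr\"ufer--Manis if and only if it is integrally closed and chained; the paper's entire proof of Proposition~\ref{5.1} is a one-line appeal to that theorem, applied at each localization.

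So the structure of your argument is sound and parallels the paper's (localize, then identify ``chained'' with ``PM'' under integral closedness), but as written it is incomplete: either cite \cite[Theorem 3.1, p.~187]{KZ} for the quasi-local equivalence, in which case your proof collapses to the paper's, or supply the missing argument, which amounts to reproducing the proof of that theorem and is substantially harder than the steps you have carried out.
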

\begin{proof} Use  \cite[Theorem 3.1, p. 187]{KZ}
\end{proof}

\begin{proposition}\label{5.2} Let $R\subseteq S$ be an FMC extension. 
\begin{enumerate}
 \item Assume that  $R\subseteq S$ is arithmetic and integrally closed.  Then $\mathrm{Supp}_{R_P}(S_P/R_P)$ is a chain for each $P\in \mathrm{Spec}(R)$.
 \item  Assume that $R\subseteq S$ is   chained, then $|\mathrm{MSupp}_R(S/R)|=1$.
 \end{enumerate}
\end{proposition}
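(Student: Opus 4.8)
The plan is to prove part (2) first and then deduce part (1) from it by localizing and running an induction along a maximal chain. For (2), I would begin by noting that a chained FMC extension is in fact FIP: since $[R,S]$ is totally ordered it is its own unique maximal chain, so the existence of a finite maximal chain forces $[R,S]$ to be a finite chain $R=R_0\subset\cdots\subset R_n=S$, and in particular $R\subseteq S$ has FCP. The main tool is then the localization/gluing decomposition of an FCP extension from \cite{DPP2}: the order map $[R,S]\to\prod_{M\in\mathrm{MSupp}(S/R)}[R_M,S_M]$, $T\mapsto(T_M)_M$, is an order-isomorphism (injectivity is clear, since a subalgebra is recovered from its localizations at all maximal ideals and $R_M=S_M$ off the support). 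Each factor indexed by $M\in\mathrm{MSupp}(S/R)$ has $R_M\neq S_M$, hence is a poset with at least two elements; and a product of posets is totally ordered only if at most one factor has more than one element (two non-trivial factors produce an obvious incomparable pair). As $[R,S]$ is a chain and $R\neq S$ guarantees at least one non-trivial factor, exactly one factor is non-trivial, i.e. $|\mathrm{MSupp}(S/R)|=1$.

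For (1), fix $P\in\mathrm{Spec}(R)$; if $P\notin\mathrm{Supp}(S/R)$ then $R_P=S_P$ and there is nothing to prove, so assume $P\in\mathrm{Supp}(S/R)$. I would pass to $R_P\subseteq S_P$, which is again FMC, integrally closed, and arithmetic; as $R_P$ is local, arithmetic means that $R_P\subseteq S_P$ is chained. By Proposition~\ref{5.1} this integrally closed arithmetic extension is Pr\"ufer, so every intermediate ring is a flat epimorphism over $R_P$; hence, by item (4) of the Scholium, each step $A_i\subset A_{i+1}$ of a finite maximal chain $R_P=A_0\subset\cdots\subset A_m=S_P$ is a minimal flat epimorphism, and such a step deletes precisely its crucial maximal ideal, i.e. $\mathrm{Spec}(A_{i+1})=\mathrm{Spec}(A_i)\setminus\{\mathcal{C}(A_i,A_{i+1})\}$ with all the spectral maps injective.

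The heart of (1) is then an iteration of (2) along this tower. For each $i$ the truncated extension $A_i\subseteq S_P$ is chained (a subchain of $[R_P,S_P]$), FMC and integrally closed, so (2) yields a unique $N_i\in\mathrm{MSupp}_{A_i}(S_P/A_i)$; since $\mathcal{C}(A_i,A_{i+1})$ is a maximal ideal of $A_i$ lying in $\mathrm{Supp}(S_P/A_i)$, it must coincide with $N_i$. Setting $Q_i:=N_i\cap R_P$ and using that $\mathrm{Spec}(A_i)\hookrightarrow\mathrm{Spec}(R_P)$ is injective and inclusion-preserving, I would check that $N_{i+1}$, viewed in $A_i$, lies in $\mathrm{Supp}(S_P/A_i)$ whose unique maximal element is $N_i$, and is distinct from the deleted $N_i$, whence $N_{i+1}\subsetneq N_i$ in $A_i$ and therefore $Q_{i+1}\subsetneq Q_i$. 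By Lemma~\ref{1.9} one has $\mathrm{Supp}_{R_P}(S_P/R_P)=\{Q_0,\ldots,Q_{m-1}\}$ (consistent with Lemma~\ref{1.8}), and the strict inclusions $Q_0\supsetneq Q_1\supsetneq\cdots\supsetneq Q_{m-1}$ exhibit this support as a chain.

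The main obstacle is twofold. In (2) it is the order-isomorphism $[R,S]\cong\prod_{M}[R_M,S_M]$, which I would cite from \cite{DPP2} rather than reprove. In (1) the delicate point is the bookkeeping that the crucial ideals pull back to a \emph{strictly descending} chain of primes of $R_P$: this is exactly where (2) is invoked once per truncation $A_i\subseteq S_P$, and where one must use both the deletion property of minimal flat epimorphisms and the injectivity of the flat-epimorphic spectral maps to upgrade mere comparability of $Q_{i+1}$ and $Q_i$ to strict inclusion. I expect these spectral identifications to be the main technical work.
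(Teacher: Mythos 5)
Your proof is correct, but it takes a genuinely different route from the paper's on both parts. For (2), the paper never passes to a product decomposition: it uses \cite[Lemma 1.10]{Pic 3} to produce, for each $M\in\mathrm{MSupp}_R(S/R)$, a minimal subextension $R\subset R'_1$ with $\mathcal{C}(R,R'_1)=M$; two distinct ideals in $\mathrm{MSupp}_R(S/R)$ then give two distinct minimal subextensions of $R$, which can never be comparable, contradicting chainedness. Your preliminary observation that chained plus FMC forces $[R,S]$ to be a finite chain (hence FCP), followed by the order-isomorphism $[R,S]\to\prod_M[R_M,S_M]$, is an equally valid and arguably more conceptual argument; the decomposition you need is indeed available for FCP extensions (it is the localization machinery of \cite[Proposition 3.7]{DPP2} and \cite{DPP3}, Lemma 3.16 and Proposition 4.6, rather than a generic fact), and all you really use is surjectivity together with the fact that inclusions of subalgebras are detected by localization at maximal ideals. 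For (1), the paper's proof is much shorter but outsources the key point to Manis valuation theory: after localizing, it invokes Proposition~\ref{5.1} to get a Pr\"ufer--Manis extension, identifies $\mathrm{Supp}_{R_P}(S_P/R_P)$ with $\{Q\mid QS_P=S_P\}$ by Lemma~\ref{1.8}, and quotes the proof of \cite[Theorem 3.1, p. 187]{KZ} for the fact that this set is a chain in a PM extension. You instead bootstrap (1) from (2) by induction along a maximal chain of minimal flat epimorphisms, then recover the support from Lemma~\ref{1.9}; this stays inside the FCP/minimal-extension toolkit and avoids citing the internals of a proof in \cite{KZ}, at the price of the spectral bookkeeping, which you carry out correctly. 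One small point to repair: the deletion property $\mathrm{Spec}(A_{i+1})=\mathrm{Spec}(A_i)\setminus\{\mathcal{C}(A_i,A_{i+1})\}$ does not follow from item (4) of the Scholium alone; item (4) gives that each step is a (minimal) flat epimorphism and item (1) gives injectivity, but the fact that the crucial ideal is not lain over needs the standard Ferrand--Olivier argument (a faithfully flat epimorphism is an isomorphism, whence $\mathcal{C}(A_i,A_{i+1})A_{i+1}=A_{i+1}$). This is routine to add and does not affect the validity of your proof.
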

\begin{proof} (1) Assume that $R\subseteq S$ is an  arithmetic integrally closed FMC extension. We can assume that $R$ is local with maximal ideal $M$ in $\mathrm{Supp}(S/R)$. If $R\subseteq S$ is PM, observe that the set of all prime ideals $Q$ of $R$ such that $QS= S$  is  $\mathrm{Supp}(S/R)$  by Lemma \ref{1.8} and  is a chain by the proof of \cite[Theorem 3.1, p. 187]{KZ}. 

(2) Let $R\subset S$ be a chained  FMC extension. Let $M\in\mathrm{MSupp}(S/R)$. We begin to show that there exists $R'_1\in[R,S]$ such that $R\subset R'_1$ is a minimal extension with $\mathcal{C}(R,R'_1)=M$. Let $R=R_0\subset\cdots\subset R_i\subset\cdots\subset R_n=S$ be a maximal chain of subextensions, where $R_i\subset R_{i+1}$ is minimal for each $i\in\{0,\ldots,n-1\}$. If $\mathcal{C}(R,R_1)=M$, we set $R'_1:=R_1$. Assume that $\mathcal{C}(R,R_1) \neq M$, and set $k:=\inf\{i\in\{1,\ldots,n\}\mid\mathcal{C}(R_{i-1},R_i)= M\}$. Then, $k>1,\ \mathcal{C}(R_ {k-1},R_k)=M$, and $\mathcal{C}(R_{i-1},R_i)\neq M$ for each $i<k$. It follows that $M\not\in\mathrm{MSupp}(R_{k-1}/R)$. By \cite[Lemma 1.10]{Pic 3}, there exists $R'_1\in[R,R_k]$ such that $R\subset R'_1$ is a minimal extension with $\mathcal{C}(R,R'_1)=M$.

We claim that $|\mathrm{MSupp}(S/R)|=1$. Deny and let  $N\in\mathrm{MSupp}(S/R)$,

\noindent $N\neq M$. The previous proof shows that there exists $R''_1\in[R,S]$ such that $R\subset R''_ 1$ is a minimal extension with $\mathcal{C}(R,R''_1)=N$, so that $R''_1\neq R'_1$, a contradiction since $R\subseteq S$ is chained.
\end{proof}

We will say that a ring extension $R \subseteq S$ is {\it quasi-Pr\"ufer}  (respectively, {\it quasi-Pr\"ufer-Manis)} if $\overline R \subseteq S$ is Pr\"ufer (respectively, Pr\"ufer-Manis). 
We will also say that an extension $R \subseteq S$ is {\it pinched} at some $T\in [R,S]$ if each element of $[R,S]$ is comparable under inclusion to $T$.

\begin{proposition}\label{5.3} Let $R\subseteq S$ be an extension. Then $R\subseteq S$ is chained if and only if $R\subseteq\overline R$ is chained, $R \subseteq S$ is quasi-Pr\"ufer-Manis and $[R,S]$ is pinched at $\overline R$.
Moreover, for each invertible element $x \in S$, we have either $x \in \overline R$ or $x^{-1} \in \overline R$.
\end{proposition}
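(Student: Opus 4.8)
The plan is to split the equivalence into its three conditions and handle the dichotomy last, exploiting throughout that $\overline R$ is integrally closed in $S$ and that, on the side where integral closure is reached, we may call upon the Pr\"ufer--Manis theory of Knebusch and Zhang.

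First I would record the cheap half of the forward implication. If $R\subseteq S$ is chained, then $[R,\overline R]$ is a subset of the chain $[R,S]$ and hence a chain, which is condition (1); and since any two members of $[R,S]$ are comparable, every member is in particular comparable to $\overline R$, so $[R,S]$ is pinched at $\overline R$, which is condition (3). The real content of the forward direction is the quasi-Pr\"ufer--Manis claim, namely that $\overline R\subseteq S$ is PM. Here $[\overline R,S]\subseteq[R,S]$ is again a chain, and since the localization maps $[R,S]\to[R_P,S_P]$ are surjective this chain stays a chain after localizing, so $\overline R\subseteq S$ is arithmetic. As $\overline R$ is integrally closed in $S$, Proposition~\ref{5.1} then makes $\overline R\subseteq S$ locally Pr\"ufer--Manis, and I would upgrade this to a global PM extension by feeding the chain hypothesis into the Knebusch--Zhang structure theorem \cite[Theorem 3.1, p. 187]{KZ} (to be completed by Theorem~\ref{5.140}), the chain producing the unique maximal regular ideal that singles out one surjective Manis valuation.

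For the converse I would assume (1)--(3) and rebuild the chain. Condition (2) says $\overline R\subseteq S$ is PM; since the intermediate rings of a Manis extension are governed by the convex subgroups of a totally ordered value group, $[\overline R,S]$ is a chain. Together with (1) this yields two chains glued along $\overline R$, and the pinched condition (3) forces every $T\in[R,S]$ to satisfy $T\subseteq\overline R$ or $\overline R\subseteq T$, so that $T$ lies in one of the two chains. A one-line case analysis then settles comparability of arbitrary $T_1,T_2\in[R,S]$: if one is contained in $\overline R$ and the other contains $\overline R$ they are comparable through $\overline R$, and otherwise they both lie in a single chain; hence $R\subseteq S$ is chained.

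Finally, for the dichotomy, let $x\in S$ be invertible. Then $\overline R[x]$ and $\overline R[x^{-1}]$ belong to $[R,S]$ and so are comparable; assume, say, $\overline R[x^{-1}]\subseteq\overline R[x]$, so $x^{-1}\in\overline R[x]$. Writing $x^{-1}=a_0+a_1x+\cdots+a_nx^n$ with $a_i\in\overline R$ and multiplying by $(x^{-1})^n$ gives the monic relation $(x^{-1})^{n+1}-a_0(x^{-1})^n-\cdots-a_n=0$, so $x^{-1}$ is integral over $\overline R$ and therefore lies in $\overline R$, as $\overline R$ is integrally closed in $S$; the reversed inclusion yields $x\in\overline R$ symmetrically. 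I expect the only genuine obstacle to be the forward quasi-PM assertion: passing from ``integrally closed and chained'', equivalently locally PM, to a single global Manis valuation requires the valuation-theoretic machinery of \cite{KZ}, and one must check that the chain hypothesis really does pin down one surjective valuation rather than merely a local family.
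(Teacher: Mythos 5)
Your proof is correct and follows essentially the same route as the paper: the equivalence is exactly an application of \cite[Theorem 3.1, p.~187]{KZ} to the integrally closed extension $\overline R\subseteq S$ (that theorem is precisely the global statement ``integrally closed and chained iff Pr\"ufer--Manis,'' so your detour through Proposition~\ref{5.1} and local PM-ness is unnecessary, and the obstacle you worry about at the end is exactly what the cited theorem settles). Your argument for the dichotomy on invertible elements is likewise the paper's: it compares $\overline R[x]$ with $\overline R[x^{-1}]$ and uses integral closedness of $\overline R$ in $S$, except that you write out by hand the integrality computation that the paper delegates to \cite[Lemma 1.2]{G}.
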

\begin{proof} Use \cite[Theorem 3.1, p. 187]{KZ} for $\overline R \subseteq S$ to prove the first statement.  We show the second. If $x \in S$ is invertible, then $\overline R[x]$ is comparable to $\overline R[x^{-1}]$ and $\overline R [x] \cap \overline R[x^{-1}] = \overline R$ because $\overline R \subseteq \overline R [x] \cap \overline R[x^{-1}]$ is integral \cite[Lemma 1.2]{G}.  
\end{proof}

\begin{remark}\label{5.4}
 We can also deduce the first statement from the second by using \cite[Theorem 3.13, p.195]{KZ} in case $\overline R \subseteq S$ is a Marot extension; that is, for each  $s \in S \setminus \overline R$, the $\overline R$-module $\overline R + \overline Rs$ is generated over $\overline R$ by a set of units of $S$.
 \end{remark}
 \begin{lemma}\label{5.5} Let $ R\subseteq S$ be an extension and $J$ an ideal of $S$ with $I: = J\cap R$. 
 
 \begin{enumerate}
 \item The map $T \mapsto T/(T\cap J)$ from $[R,S]$ to $[R/I, S/J]$ is surjective and order-preserving. Its restriction $[R+J,S] \to [R/I,S/J]$ is bijective and order-preserving and order-reflecting.
 \item If $R \subseteq S$ is  chained, then $R/I\subseteq S/J$ is  chained.
 
 \item If $R \subseteq S$ is arithmetic, then $R/I \subseteq S/J$ is arithmetic.
 \item If $R\subseteq S$ is Pr\"ufer, then  $R/I \subseteq S/J$ is a Pr\"ufer extension. In particular, if $N$  is  a maximal ideal of $S$ and $R\subseteq S$ is  chained, then $R/(N\cap R)$ is a valuation domain with quotient field $S/N$. 
 \end{enumerate}
 \end{lemma}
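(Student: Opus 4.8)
The plan is to prove the correspondence in (1) first and then deduce (2), (3) and (4) from it, using the Scholium on flat epimorphisms for the Pr\"ufer case. Throughout write $\pi\colon S\to S/J$ for the canonical surjection and identify $T/(T\cap J)$ with its isomorphic image $(T+J)/J$ in $S/J$, so that the map of (1) becomes $T\mapsto (T+J)/J$. Since $R/I=R/(R\cap J)$ is identified with $(R+J)/J$, every value lies in $[R/I,S/J]$, and $T_1\subseteq T_2$ forces $(T_1+J)/J\subseteq(T_2+J)/J$; this gives order-preservation. Surjectivity is immediate: for $W\in[R/I,S/J]$ the ring $T:=\pi^{-1}(W)$ contains $R+J$ and satisfies $(T+J)/J=T/J=W$. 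Restricting to $T\in[R+J,S]$ one has $J\subseteq T$, hence $T\cap J=J$ and $T\mapsto T/J$ is exactly the classical lattice correspondence between the subrings of $S$ containing the ideal $J$ and the subrings of $S/J$; its inverse $W\mapsto\pi^{-1}(W)$ also preserves inclusions, so the restriction is an order isomorphism, in particular order-reflecting.

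For (2), an order-preserving surjection sends a chain onto a chain: given $W_1,W_2\in[R/I,S/J]$, lift them by surjectivity to $T_1,T_2\in[R,S]$, and comparability of $T_1,T_2$ yields comparability of $W_1,W_2$. For (3) I would reduce to (2) by localizing. The maximal ideals of $R/I$ are the $M/I$ with $M\in\mathrm{Max}(R)$ and $M\supseteq I$; since localization is exact and commutes with finite intersection one has $(R/I)_{M/I}\cong R_M/IR_M$, $(S/J)_{M/I}\cong S_M/JS_M$ and $JS_M\cap R_M=IR_M$. As $R\subseteq S$ is arithmetic, $R_M\subseteq S_M$ is chained, so (2) applied to the ideal $JS_M$ of $S_M$ shows $(R/I)_{M/I}\subseteq(S/J)_{M/I}$ is chained; hence $R/I\subseteq S/J$ is arithmetic.

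For (4) the key observation is the Scholium: if $R\to S$ is a flat epimorphism, then by Scholium (3) every ideal $J$ of $S$ satisfies $J=(J\cap R)S=IS$, so $S/J=S/IS=S\otimes_R(R/I)$ is the base change of $R\to S$ along $R\to R/I$; by Scholium (5) the induced map $R/I\to S/J$ is again a flat epimorphism, injective because $J\cap R=I$. Now let $R\subseteq S$ be Pr\"ufer and take $W\in[R/I,S/J]$; by (1) it equals $T/J$ for a unique $T\in[R+J,S]$, with $J$ an ideal of $T$ and $J\cap R=I$. Since $R\subseteq T$ is a flat epimorphism, the previous argument applied to $R\to T$ makes $R/I\to T/J=W$ a flat epimorphism; as $W$ is arbitrary, $R/I\subseteq S/J$ is Pr\"ufer. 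For the final assertion I take $J=N$ maximal and combine (2) and (4): $R/(N\cap R)\subseteq S/N$ is then a chained Pr\"ufer extension whose top ring $S/N$ is a field. Being Pr\"ufer it is integrally closed, so for $D:=R/(N\cap R)$ and $t\in(S/N)^{\times}$ the comparability of $D[t]$ and $D[t^{-1}]$ makes $t$ or $t^{-1}$ integral over $D$, and integral closedness upgrades this to $t\in D$ or $t^{-1}\in D$; thus $D$ is a valuation domain with quotient field $S/N$.

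The routine content is in (1)--(3), which are purely formal once the identification $T/(T\cap J)\cong(T+J)/J$ and the commutation of localization with quotients are in place. I expect the main obstacle to be (4). The whole Pr\"ufer argument rests on the identity $J=IS$, which holds only because $R\to S$ is a flat epimorphism (Scholium (3)), together with base-change stability (Scholium (5)); one must also check that $J$ is genuinely an ideal of every intermediate $T$ and that $J\cap R=I$ is preserved. The most delicate point is the closing valuation-domain claim: the chain condition alone gives only an \emph{integral} dichotomy, and it is the Pr\"ufer hypothesis (forcing integral closedness in the field $S/N$) that is needed to obtain the honest valuation condition $t\in D$ or $t^{-1}\in D$ and the equality $\mathrm{Frac}(D)=S/N$; this is why the Pr\"ufer hypothesis cannot be dropped from the final assertion.
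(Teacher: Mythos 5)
Your proof is correct, and items (1)--(3) follow essentially the paper's own route: identify $T/(T\cap J)$ with $(T+J)/J$, reduce to the shared-ideal situation by replacing $R$ with $R+J$, and get (3) from (2) by localizing, using $J_P\cap R_P=I_P$. Where you genuinely diverge is item (4) and the closing assertion. The paper disposes of (4) in one line: Pr\"ufer-ness passes to $R+J\subseteq S$, and then it cites Knebusch--Zhang (Proposition 5.8, p.~52) for the quotient of a Pr\"ufer extension by a shared ideal; for the valuation-domain claim it invokes its Proposition~5.3 (Gilbert's comparability-of-$\overline{R}[x]$ and $\overline{R}[x^{-1}]$ argument), leaving implicit that the Pr\"ufer hypothesis is what forces $D=\overline{D}$. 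You instead give a self-contained argument staying entirely inside the paper's Scholium: since $R\subseteq T$ is a flat epimorphism for every $T\in[R,S]$, Scholium~(3) gives $J=IT$, so $T/J=T\otimes_R(R/I)$ is a base change and Scholium~(5) makes $R/I\to T/J$ a flat epimorphism, injective since $J\cap R=I$; running over all $W=T/J$ via the bijection in (1) yields the Pr\"ufer property of $R/I\subseteq S/J$ directly from the definition. Likewise you reprove the unit dichotomy ($t$ or $t^{-1}$ integral over $D$, then in $D$ by integral closedness) rather than quoting Proposition~5.3, and you correctly flag that the chain condition alone only gives integrality, so the Pr\"ufer hypothesis cannot be dropped --- a point the paper's terse proof glosses over. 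The trade-off: the paper's proof is shorter but outsources the substance to \cite{KZ}; yours is longer but elementary, internal to the paper's toolkit, and makes the logical dependence of the final assertion on Pr\"ufer-ness explicit.
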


\begin{proof} 

To prove that (1) and (2) hold, it is enough to observe that $(R+ J)/J$ is isomorphic to $R/I$ and replacing $R$ with $R+J$, we have to work with an extension  of rings sharing the ideal $J$.  Then (3) follows from (2), because the localization at a prime ideal of $R/I$ is of the form $R_P/I_P$, where $P$ is a prime ideal of $R$, and $J_P \cap R_P= I_P$.

Then (4) is a consequence of the following facts: $R\subseteq S$ is Pr\"ufer entails that $R+J\subseteq S$ is Pr\"ufer and then it is enough to use \cite[Proposition 5.8, p.52]{KZ}.
 
For the last statement, use Proposition~\ref{5.3}, because $R/(N\cap R)\subseteq S/N$ is  chained by (2). \end{proof}
\begin{remark}\label{5.5.1} It follows from Lemma~\ref{5.5} that  a quasi-Pr\"ufer extension $R\subseteq S$ gives a quasi-Pr\"ufer extension  $R/(J\cap R) \subseteq S/J$ for each ideal $J$ of $S$ and $\overline{R/(J \cap R)} =\overline R/(\overline R \cap J)$.
\end{remark}

Let $U$ be an absolutely flat ring. Recall that   each element $x$ of $U$ has a unique quasi-inverse $x'\in U$,  defined by   $x^2x' =x$ and $x'^2x =x'$. In that case,  set $e= xx'$. Then $e$ is an idempotent and $1-e +x$ is a unit of $U$, such that $(1-e+x)^{-1}= (1-e +x')$.

\begin{proposition}\label{5.6} Let $R\subseteq S$ be a chained ring extension, such that  $S$ is zero-dimensional.  
\begin{enumerate}
\item  $S\cong \mathrm{Tot}(R)$ and then $\overline R$ is a Pr\"ufer ring. 

\item   Each $x\in S/\mathrm{Nil}(S)$ has a quasi-inverse $x'\in S/\mathrm{Nil}(S)$, such that either $x$ or $x'$ belongs to $\overline R/\mathrm{Nil}(R)$. 

\item $\overline R \subseteq S$ is additively regular, whence a Marot extension.

\end{enumerate}
\end{proposition}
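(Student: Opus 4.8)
The plan is to treat the three statements together, organizing everything around the reduced quotient $\overline S := S/\mathrm{Nil}(S)$, which is reduced and zero-dimensional, hence absolutely flat. Since $S$ is zero-dimensional, every non-zero-divisor of $S$ is a unit, so $S=\mathrm{Tot}(S)$; this is what makes ``regular'' and ``invertible'' interchangeable in $S$ throughout. I would first prove (2), then read off from the same computation the additive regularity that is the real content of (3), and only afterwards assemble (1), because the surjectivity half of $S=\mathrm{Tot}(\overline R)$ feeds on that additive regularity. Here $S^{\times}$ denotes the units and $\mathrm{Reg}(R)$ the non-zero-divisors.

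For (2), take $x\in\overline S$ with quasi-inverse $x'$ and associated idempotent $e=xx'$, so that $u:=1-e+x$ is a unit of $\overline S$ with $u^{-1}=1-e+x'$, and $xe=x$, $x'e=x'$. Every idempotent of $S$ is integral over $R$ (it is a root of $Y^2-Y$), hence lies in $\overline R$; thus $e$ lies in the image of $\overline R$ in $\overline S$, i.e. in $\overline R/\mathrm{Nil}(R)$. Lifting $e$ to an idempotent $\tilde e\in\overline R$ and $x$ to some $\tilde x\in S$, the element $1-\tilde e+\tilde x$ reduces to $u$ and is therefore a unit of $S$, since a lift of a unit modulo the nil ideal $\mathrm{Nil}(S)$ is a unit. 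By the ``moreover'' clause of Proposition~\ref{5.3}, either $1-\tilde e+\tilde x\in\overline R$ or its inverse does; reducing modulo $\mathrm{Nil}(S)$ gives $u\in\overline R/\mathrm{Nil}(R)$ or $u^{-1}\in\overline R/\mathrm{Nil}(R)$. Multiplying by $e$ and using $ue=x$, $u^{-1}e=x'$ yields $x\in\overline R/\mathrm{Nil}(R)$ or $x'\in\overline R/\mathrm{Nil}(R)$, which is (2).

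The same computation powers (3). For $x\in S$ whose image has associated idempotent $e$, one checks that $\overline x+(1-e)$ is a unit of $\overline S$ with inverse $\overline x'+(1-e)$; lifting $1-e$ to the idempotent $1-\tilde e\in\overline R$ shows that $x+(1-\tilde e)$ is a unit of $S$. Hence for every $x\in S$ there is an element of $\overline R$ whose sum with $x$ is regular, that is, $\overline R\subseteq S$ is additively regular, and the Marot property then follows by \cite{KZ}.

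Finally, for (1): by Proposition~\ref{5.3} the chained extension $R\subseteq S$ is quasi-Pr\"ufer-Manis, so $\overline R\subseteq S$ is Pr\"ufer, in particular a flat epimorphism; flatness preserves injectivity of multiplication, so $\mathrm{Reg}(\overline R)\subseteq S^{\times}$ and the localization map $\mathrm{Tot}(\overline R)\to S$ is injective. For surjectivity I write any $x\in S$ as $x=u-c$ with $u:=x+c$ a unit and $c\in\overline R$ (additive regularity); a unit $u$ lies in $\mathrm{Tot}(\overline R)$ by Proposition~\ref{5.3}, since either $u\in\overline R$, or $u^{-1}\in\overline R$ is a unit of $S$ and hence regular in $\overline R$, so $u=(u^{-1})^{-1}\in\mathrm{Tot}(\overline R)$; thus $x\in\mathrm{Tot}(\overline R)$. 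This gives $S=\mathrm{Tot}(\overline R)$, and $\overline R\subseteq\mathrm{Tot}(\overline R)=S$ being Pr\"ufer says precisely that $\overline R$ is a Pr\"ufer ring. To identify $S$ with $\mathrm{Tot}(R)$ I would invoke the general fact that an integral overring of $R$ lying inside $\mathrm{Tot}(R)$ has the same total quotient ring as $R$, which reduces the claim to $\overline R\subseteq\mathrm{Tot}(R)$, and in particular to $\mathrm{Reg}(R)\subseteq S^{\times}$. \emph{This last step is the one I expect to be the main obstacle.} A regular $t\in R$ that failed to be a unit of $S$ would, in $\overline S$, carry a non-unit idempotent $e$, producing a nontrivial idempotent $\tilde g\in\overline R$ with $t^{m}\tilde g=0$; this splits $\overline R\subseteq S$ as a product of two extensions, and the chained hypothesis forces one factor to be an equality. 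I would then play this splitting against $t\in\mathrm{Reg}(R)$, via the conductor $(R:\overline R)$, to reach a contradiction and secure $\mathrm{Reg}(R)\subseteq S^{\times}$, completing the identification $S\cong\mathrm{Tot}(R)$.
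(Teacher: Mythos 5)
Your proofs of (2) and (3) are correct and essentially the paper's own argument: the paper also works with the quasi-inverse unit $1-e+x$ in the absolutely flat ring $S/\mathrm{Nil}(S)$, uses that the idempotent $1-e$ lies in the integral closure, applies the ``moreover'' clause of Proposition~\ref{5.3}, and transfers back to $\overline R\subseteq S$ because $\mathrm{Nil}(S)$ is the Jacobson radical of $S$. (The only cosmetic difference: the paper runs Proposition~\ref{5.3} on the quotient extension $\overline R/\mathrm{Nil}(\overline R)\subseteq S/\mathrm{Nil}(S)$, which is chained by Lemma~\ref{5.5}, whereas you lift idempotents and units to $S$ and apply it upstairs; the two devices are interchangeable.) For (1) you genuinely diverge, in a way worth noting: the paper disposes of (1) in one line by citing an unpublished preprint of Zhang (\cite[Corollaire 4]{Z}) for the Pr\"ufer extension $\overline R\subseteq S$, whereas your argument --- additive regularity from (3) plus the dichotomy ``$u\in\overline R$ or $u^{-1}\in\overline R$'' of Proposition~\ref{5.3}, showing every unit of $S$ lies in the image of $\mathrm{Tot}(\overline R)$ --- is a correct, self-contained proof that $S\cong\mathrm{Tot}(\overline R)$, hence that $\overline R$ is a Pr\"ufer ring. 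That is a real gain in transparency over the citation.

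However, the step you flag as ``the main obstacle'' is a genuine gap, and one that cannot be closed: the identification $S\cong\mathrm{Tot}(R)$ (with $R$, not $\overline R$) is false under the stated hypotheses. Take $R:=k$ a field embedded diagonally in $S:=k\times k$. This is a minimal decomposed extension, so $[R,S]=\{R,S\}$ is a chain, and $S$ is zero-dimensional; yet $\mathrm{Tot}(R)=k\not\cong k\times k=S=\mathrm{Tot}(\overline R)$ (here $\overline R=S$). Note that in this example $\mathrm{Reg}(R)\subseteq S^{\times}$ holds trivially, so the conductor/idempotent argument you sketch, even if completed, would not finish the job: what fails is the other half of your reduction, namely $\overline R\subseteq\mathrm{Tot}(R)$. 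The correct conclusion is that item (1) should assert $S\cong\mathrm{Tot}(\overline R)$ --- which is also what the paper's own deduction ``hence $\overline R$ is a Pr\"ufer ring'' actually requires, and is what the appeal to \cite{Z} for the extension $\overline R\subseteq S$ can deliver --- and your rigorous argument proves exactly that corrected statement. So the defect lies in the statement (and in the paper's proof, which inherits it), not in your method; you should simply replace $\mathrm{Tot}(R)$ by $\mathrm{Tot}(\overline R)$ and delete the sketched final step.
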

\begin{proof} We observe that $\overline R\subseteq S$ is  chained and then $\overline R \subseteq S$ is Pr\"ufer by Proposition~\ref{5.3}. It follows from \cite[Corollaire 4]{Z}, that  $S$ identifies with $\mathrm{Tot}(R)$ and hence $\overline R$ is a Pr\"ufer ring. Since $\overline R \subseteq S$ is integrally closed, we have that $\mathrm{Nil}(S) =\mathrm{Nil}(\overline R)$. Set $U:= S/\mathrm{Nil}(S)$ and $T:= \overline R/\mathrm{Nil}(\overline R)$. We get a Pr\"ufer extension $T\subseteq U$ by Lemma~\ref{5.5}, where $U$ is absolutely flat, whence $T\subseteq U$ is integrally closed. By the above recall and Proposition~\ref{5.3}, if $x$ is in $U$, then either $x\in T$ or $x'\in T$ because $1-e$ is an idempotent of $U$, belonging to $T$. Moreover, there is some $t= 1-e\in T$ such that $x+t$ is invertible in $U$. Since $\mathrm{Nil}(S) =\mathrm{Rad}(S)$, the Jacobson radical, we get that the same property holds for the extension $\overline R \subseteq S$. In other words, $\overline R \subseteq S$ is additively regular, whence a Marot extension (see \cite[Remark 3.15, p. 196]{KZ}).
\end{proof}

Gilbert proved that an integral domain $R$ with quotient field $K$ is such that $R\subseteq K$ is chained ($R$ is a $\lambda$-domain with the Gilbert's terminology) if and only if $[R,K] $ is pinched at $\overline R$, $R$ is a quasi-local $i$-domain and $R\subseteq\overline R$ is chained \cite[Theorem 1.9]{G}. We note that this result implies  that $R$ is a quasi-local unbranched domain, that is $\overline R$ is quasi-local (actually, in this case $\overline R$ is a valuation domain).

 We intend to generalize this result to some extension.
Before that we give a characterization of $i$-{\it pairs,} that are ring extension $R\subseteq S$ such that $\mathrm{Spec}(T) \to \mathrm{Spec}(R)$ is injective for each $T\in [R,S]$. We will say that a quasi-local ring $R$ is {\it unbranched }in $S$ if $\overline R$ is quasi-local.

\begin{proposition} \label{5.7} An extension $R \subseteq S$ defines an $i$-pair if and only if $R \subseteq S$ is quasi-Pr\"ufer and $R \subseteq \overline R$ is spectrally injective.
\end{proposition}
\begin{proof} One implication is given by \cite[Theorem 5.2(9), p. 47]{KZ}. For the converse, assume that  $\mathrm{Spec}(\overline R) \to \mathrm{Spec}(R)$ is injective  and that $R \subseteq S$ is quasi-Pr\"ufer and let $T \in [R,S]$. To conclude, consider $U:= \overline RT$. Then $\overline R  \subseteq U$ is a flat epimorphism, whence spectrally injective and $T\subseteq U$ is integral. Since $R \subseteq U$ is spectrally injective, we get that $R\subseteq T$ is spectrally injective.
\end{proof}

\begin{remark}\label{5.8} A similar proof shows that an extension $R \subseteq S$ is quasi-Pr\"ufer if and only if $R\subseteq S$ is an Inc-pair.
\end{remark}

\begin{proposition}\label{5.9} Let $R\subseteq S$ be an extension, such that $R$ is quasi-local and unbranched in $S$.  Then $R\subseteq S$ is chained if and only if $R\subseteq\overline R$ is chained, $ R\subseteq S$ is quasi-Pr\"ufer and $[R,S]$ is pinched at $\overline R$. In that case $R \subseteq S$ defines an $i$-pair.
\end{proposition}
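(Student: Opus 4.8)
The plan is to bootstrap from Proposition \ref{5.3}: the two characterizations differ only in that Proposition \ref{5.3} requires $R\subseteq S$ to be quasi-Pr\"ufer-Manis, whereas here I only assume it quasi-Pr\"ufer, the gap being bridged by the extra hypothesis that $R$ is unbranched in $S$, i.e. that $\overline R$ is quasi-local. Since a quasi-Pr\"ufer-Manis extension is in particular quasi-Pr\"ufer, the forward implication is immediate: if $R\subseteq S$ is chained, then Proposition \ref{5.3} already yields that $R\subseteq\overline R$ is chained, that $R\subseteq S$ is quasi-Pr\"ufer-Manis (hence quasi-Pr\"ufer), and that $[R,S]$ is pinched at $\overline R$.

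For the converse I would assume that $R\subseteq\overline R$ is chained, that $R\subseteq S$ is quasi-Pr\"ufer and that $[R,S]$ is pinched at $\overline R$. By Proposition \ref{5.3} it then suffices to promote ``quasi-Pr\"ufer'' to ``quasi-Pr\"ufer-Manis'', that is, to prove that the Pr\"ufer extension $\overline R\subseteq S$ is Manis. This is exactly where the quasi-locality of $\overline R$ enters. Writing $N$ for the maximal ideal of $\overline R$, one checks that $N\in\mathrm{Supp}(S/\overline R)$ whenever $\overline R\subsetneq S$, and for the Pr\"ufer extension $\overline R\subseteq S$ this support coincides with $\{P\in\mathrm{Spec}(\overline R)\mid PS=S\}$; since $N$ is the unique maximal ideal of $\overline R$, it is the largest element of this set. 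The Knebusch--Zhang criterion (\cite{KZ}) for a Pr\"ufer extension to be Pr\"ufer--Manis is precisely that this set of primes has a largest element, so $\overline R\subseteq S$ is PM, $R\subseteq S$ is quasi-Pr\"ufer-Manis, and Proposition \ref{5.3} gives that $R\subseteq S$ is chained.

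Finally, for the $i$-pair assertion I would invoke Proposition \ref{5.7}, according to which $R\subseteq S$ is an $i$-pair precisely when it is quasi-Pr\"ufer (already in hand) and $R\subseteq\overline R$ is spectrally injective. It therefore remains to prove that $\mathrm{Spec}(\overline R)\to\mathrm{Spec}(R)$ is injective, and this is the step I expect to be the main obstacle, since it is the only place where the quasi-locality of $\overline R$ must be combined with the chain structure of $R\subseteq\overline R$ inherited from $[R,S]$. The approach is by contradiction: two distinct primes $Q_1,Q_2$ of $\overline R$ lying over a common $P\in\mathrm{Spec}(R)$ are incomparable by integrality, and I would argue that such a configuration reproduces, in a suitable localized subextension, a decomposed minimal extension in the sense of Theorem \ref{1.4}, whose target carries two distinct maximal ideals. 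Reconciling this with $\overline R$ being quasi-local, that is, controlling how the branches $Q_1,Q_2$ are forced to merge into the single maximal ideal $N$ of $\overline R$ without violating the chain property of $[R,\overline R]$, is the delicate point on which the whole argument turns.
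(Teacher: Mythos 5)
Your proof of the equivalence is sound and follows essentially the paper's route: since $\overline R$ is quasi-local, the Pr\"ufer extension $\overline R\subseteq S$ is automatically Pr\"ufer--Manis, so ``quasi-Pr\"ufer'' upgrades to ``quasi-Pr\"ufer--Manis'' and Proposition~\ref{5.3} applies in both directions. (The paper gets the upgrade by quoting \cite[Theorem 1.8, p.~181]{KZ} directly for the quasi-local base ring; your derivation via the largest $S$-regular prime of $\overline R$ is a reasonable reconstruction of the same point, and could in any case be replaced by that citation.)

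The genuine gap is in the $i$-pair assertion. You correctly reduce, via Proposition~\ref{5.7}, to showing that $\mathrm{Spec}(\overline R)\to\mathrm{Spec}(R)$ is injective, but you only sketch a contradiction argument and admit you cannot close it --- and as sketched it cannot be closed, because the configuration you want to contradict is not contradictory on the data you use. Two incomparable primes $Q_1,Q_2$ of $\overline R$ lying over a non-maximal $P\in\mathrm{Spec}(R)$, both contained in the unique maximal ideal $N$ of $\overline R$, are perfectly compatible with $\overline R$ being quasi-local; and a decomposed minimal subextension (Theorem~\ref{1.4}(b)) does not by itself violate chainedness, since a minimal extension $A\subset B$ has $[A,B]=\{A,B\}$, trivially a chain. (There is also the lesser issue that, absent any FCP/FMC hypothesis, minimal subextensions need not be available, though one could pass to $R[x]$ for $x\in Q_1\setminus Q_2$.) What is actually needed --- and what the paper uses --- is the structure of the top part $\overline R\subseteq S$, which your sketch never touches: since $\overline R$ is quasi-local and $\overline R\subseteq S$ is Pr\"ufer(-Manis), there exists $Q\in\mathrm{Spec}(\overline R)$ with $S=\overline R_Q$, $Q=QS$, $\overline R/Q$ a valuation domain, and $Q$ divided, i.e. comparable with every prime of $\overline R$, and indeed a minimal prime by going-down for the flat extension $\overline R\subseteq S$ (the paper invokes \cite[Theorem 6.8]{DPP2}). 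Consequently any two primes of $\overline R$ over the same prime of $R$ are comparable: either both contain $Q$ and lie in the chain $\mathrm{Spec}(\overline R/Q)$, or they are separated by the divided prime $Q$, or both lie strictly below $Q$, which is impossible by minimality of $Q$; comparable primes over the same prime of $R$ then coincide by incomparability of the integral extension $R\subseteq\overline R$. Without importing this rigidity from the Pr\"ufer side, the contradiction you are hoping for simply does not exist.
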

\begin{proof}  We can suppose that $\overline R\neq S$. Observe that $\overline R$ is quasi-local. So $\overline R \subseteq S$ is Pr\"ufer if and only it is Pr\"ufer-Manis \cite[Theorem 1.8, p. 181]{KZ} and also, if and only if $\overline R\subseteq S$ is chained \cite[Theorem 3.1, p. 187]{KZ}. The first  statement is now clear.

Now, since $\overline R$ is quasi-local, from \cite[Theorem 6.8]{DPP2}, we deduce that there exists $Q \in \mathrm{Spec}(\overline R)$ such that $S= \overline R_Q$, $Q= SQ$ and $\overline R/Q$ is a valuation domain. Under these conditions $S/Q$ is the quotient field of $\overline R$/Q and $Q$ is a divided ideal of $\overline R$; that is, comparable with any other prime ideal of $\overline{R}$. We observe that $Q$ is the conductor of $\overline R \subseteq S$. Let $M, M'$ be two prime ideals of $\overline R$ lying over some prime ideal $P$ of $R$. If $M$ and $M'$ both contain $Q$, they are comparable and by incomparability of $ R \subseteq \overline R$, we get that $M= M'$. If $M\subseteq Q \subseteq M'$, we get also $M=M'$. Thus there is only one case to examine: $M, M' \subset Q$. Since the flat  extension $\overline R \subseteq S$ has the Going-Down property, $Q$ is a minimal prime ideal in $\overline R$  and then  $M= M'$. To conclude, it is enough to use Proposition~\ref{5.7}, because $R \subseteq \overline R$ is spectrally injective.
  \end{proof}
  The following ``birationnal'' result is surely well-known.
  \begin{lemma}\label{5.9.1} Let $R$ be a ring whose total quotient ring $S$ is zero-dimensional and with integral closure  $\overline R$. Then the  map $\mathrm{Spec}(S) \to \mathrm{Spec}(R)$ is injective and induces bijective maps $\varphi : \mathrm{Max}(S)= \mathrm{Min}(S) \to \mathrm{Min}(R)$ and $\psi :\mathrm{Min}(\overline R) \to \mathrm{Min}(R)$.  For $M \in \mathrm{Min}(R)$, we  set $M_S : =\varphi^{-1}(M)$ and $M_{\overline R} := \psi^{-1}(M)$.
 \end{lemma}
 \begin{proof} For each injective extension $A \subseteq B$, any minimal prime ideal of $A$ is lain over by a minimal prime ideal of $B$, any minimal prime ideal of $B$ contracts to a minimal prime ideal of $A$ when $A \subseteq B$ is flat and $\mathrm{Spec}(B) \to \mathrm{Spec}(A)$ is injective when $A \subseteq B$ is a flat epimorphism (see Scholium).
 \end{proof}

\begin{theorem}\label{5.10} Let $R\subseteq S$ be an extension, where $R$ is locally irreducible and  $S$ is zero-dimensional.  
\begin{enumerate}

\item If $R\subseteq S$ is  chained, then $\overline R$ is a Pr\"ufer ring with total quotient ring $S$ and $R \subseteq S$ defines an $i$-pair. Moreover, the following two conditions (*) and (**) hold: 

(*)    $R$ is locally unbranched in $S$.

(**)  $R/M$ is a quasi-local $i$-domain for each $M\in \mathrm{Min}(R)$.

\item Suppose that $R\subseteq\overline R$ is chained, $[R,S]$ is pinched at $\overline R$, $S =\mathrm{Tot}(R)$ and $\overline R$ is Pr\"ufer.

(a) If  (*) holds, then  $R\subseteq S$ is arithmetic.

(b)  If (**) holds,  then $R/M\subseteq S/ M_S$ is chained, for each $M\in \mathrm{Min}(R)$.
\end{enumerate}
\end{theorem}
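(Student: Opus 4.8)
The plan is to handle the two parts by the same dichotomy that organizes Section~5: reduce the ``multiplicative'' statements to the local situation controlled by Proposition~\ref{5.9}, and the ``residual'' statements to the domains controlled by Gilbert's theorem \cite[Theorem 1.9]{G}, after first recording the structural consequences of Proposition~\ref{5.6} and Lemma~\ref{5.9.1}. For statement (1), assume $R\subseteq S$ is chained. Since $S$ is zero-dimensional, Proposition~\ref{5.6}(1) applied to $R\subseteq S$ gives $S=\mathrm{Tot}(R)$ with $\overline R$ a Pr\"ufer ring, and applied to the chained subextension $\overline R\subseteq S$ it gives $S=\mathrm{Tot}(\overline R)$; hence $\overline R$ has total quotient ring $S$ and $\overline R\subseteq S$ is Pr\"ufer, i.e. $R\subseteq S$ is quasi-Pr\"ufer. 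The crux is condition (*). Fixing a prime $P$, I would use that integral closure commutes with localization, so $\overline{R_P}=(\overline R)_P$ and $R_P\subseteq S_P$ is chained with $S_P$ zero-dimensional. Here local irreducibility enters: $R_P$ has a unique minimal prime, so by the bijections of Lemma~\ref{5.9.1} the ring $S_P$ has a single prime and is therefore quasi-local with maximal ideal $\mathrm{Nil}(S_P)$. Applying Lemma~\ref{5.5}(4) to the chained extension $\overline{R_P}\subseteq S_P$ with $N=\mathrm{Nil}(S_P)$, the quotient $\overline{R_P}/(\mathrm{Nil}(S_P)\cap\overline{R_P})$ is a valuation domain, hence quasi-local; since $\mathrm{Nil}(S_P)\cap\overline{R_P}=\mathrm{Nil}(\overline{R_P})$, the ring $\overline{R_P}$ itself is quasi-local. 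This proves (*), that $R$ is locally unbranched in $S$.

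With (*) established, $R_P$ is quasi-local and unbranched in $S_P$ while $R_P\subseteq S_P$ is chained, so Proposition~\ref{5.9} gives that $R_P\subseteq S_P$ defines an $i$-pair; in particular $\mathrm{Spec}(\overline{R_P})\to\mathrm{Spec}(R_P)$ is injective for every $P$, which globalizes to spectral injectivity of $R\subseteq\overline R$. Combined with quasi-Pr\"uferness, Proposition~\ref{5.7} then yields that $R\subseteq S$ is an $i$-pair. For (**), let $M\in\mathrm{Min}(R)$; by Lemma~\ref{5.9.1} there is a unique $M_S\in\mathrm{Max}(S)=\mathrm{Min}(S)$ above $M$ with $M_S\cap R=M$, and Lemma~\ref{5.5}(4) applied to $R\subseteq S$ with $N=M_S$ shows that $R/M$ is a valuation domain with quotient field $S/M_S$. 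A valuation domain is a quasi-local $i$-domain, so (**) holds.

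For statement (2), I would first note that the standing hypotheses again give quasi-Pr\"uferness: every nonzerodivisor of $R$ is a unit of $S=\mathrm{Tot}(R)\supseteq\overline R$, hence a nonzerodivisor of $\overline R$, so $S\subseteq\mathrm{Tot}(\overline R)$; since $\overline R$ is Pr\"ufer, $\overline R\subseteq S$ is a Pr\"ufer subextension of $\overline R\subseteq\mathrm{Tot}(\overline R)$. For (a), fix $M\in\mathrm{Max}(R)$ and localize. Through the natural surjection $[R,S]\to[R_M,S_M]$ the hypotheses transfer: $R_M\subseteq\overline{R_M}$ is chained, $[R_M,S_M]$ is pinched at $\overline{R_M}$, and $R_M\subseteq S_M$ is quasi-Pr\"ufer; condition (*) supplies that $R_M$ is quasi-local and unbranched in $S_M$. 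Proposition~\ref{5.9} then delivers $R_M\subseteq S_M$ chained for every $M$, i.e. $R\subseteq S$ is arithmetic.

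For (b), fix $M\in\mathrm{Min}(R)$ and pass to the residue modulo $M_S$. As in (**), $S/M_S$ is a field; because a nonzerodivisor of $R$ avoids $M_S$, one checks $S/M_S=\mathrm{Frac}(R/M)$, and by Remark~\ref{5.5.1} the integral closure of $R/M$ in this field is $\overline R/M_{\overline R}$. Using the surjectivity and order properties in Lemma~\ref{5.5}(1)--(2), the extension $R/M\subseteq\overline R/M_{\overline R}$ is chained and $[R/M,S/M_S]$ is pinched at $\overline R/M_{\overline R}$; together with the hypothesis (**) that $R/M$ is a quasi-local $i$-domain, Gilbert's theorem \cite[Theorem 1.9]{G} yields that $R/M\subseteq S/M_S$ is chained. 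I expect the genuine obstacle to be exactly condition (*): forcing $\overline{R_P}$ to be quasi-local is where zero-dimensionality and local irreducibility must collapse $S_P$ to a quasi-local ring and chainedness must turn $\overline{R_P}/\mathrm{Nil}$ into a valuation domain; once (*) is in place, the $i$-pair property in (1) and the arithmetic conclusion in (2)(a) follow formally from Propositions~\ref{5.7} and~\ref{5.9}, while (b) is a clean application of Gilbert's theorem.
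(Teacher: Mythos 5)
Your handling of (*), of the $i$-pair claim, and of part (2) is essentially sound and runs parallel to the paper's own proof: the paper likewise gets $S=\mathrm{Tot}(R)$ and the Pr\"ufer property of $\overline R$ from Proposition~\ref{5.6}, likewise reduces the $i$-pair assertion to spectral injectivity of $R\subseteq\overline R$ followed by Proposition~\ref{5.7} (arguing globally with the unique minimal prime $\mathfrak P$ below a given prime $P$, where you localize and invoke Proposition~\ref{5.9}; the difference is cosmetic), and proves (2)(a) exactly as you do; for (2)(b) it cites Proposition~\ref{5.9} again where you use Gilbert's theorem \cite[Theorem 1.9]{G}, which amounts to the same thing. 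Your use of Lemma~\ref{5.5}(4) inside the proof of (*) is also legitimate, because there you apply it to $\overline{R_P}\subseteq S_P$, which is integrally closed and chained, hence Pr\"ufer by Theorem~\ref{5.140}, so the valuation conclusion of that lemma is available.

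The genuine gap is your one-line proof of (**). You apply Lemma~\ref{5.5}(4) to $R\subseteq S$ itself, with $N=M_S$, and conclude that $R/M$ is a valuation domain with quotient field $S/M_S$. That application is not valid: the valuation-domain conclusion in Lemma~\ref{5.5}(4) carries the standing Pr\"ufer hypothesis of that item --- its proof shows, via Proposition~\ref{5.3}, only that the \emph{integral closure} of $R/(N\cap R)$ in the field $S/N$ is a valuation ring, and one needs $R\subseteq S$ Pr\"ufer (hence integrally closed) to identify that closure with $R/(N\cap R)$. Here $R\subseteq S$ is merely chained, and your intermediate claim is false under the hypotheses of the theorem. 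Concretely, take $R=\mathbb{R}+Y\mathbb{C}[[Y]]\subset S=\mathbb{C}((Y))$: any $T\in[R,S]$ with $T\neq R$ contains $i$ and hence $\mathbb{C}[[Y]]$, so $[R,S]=\{R,\mathbb{C}[[Y]],\mathbb{C}((Y))\}$ is a chain, $S$ is zero-dimensional and $R$ is a domain; yet $R/M=R$ (for $M=0=M_S$) is a pseudo-valuation domain that is \emph{not} a valuation domain, although it is a quasi-local $i$-domain, as (**) asserts. (Even in the tamer example $\mathbb{Q}\subset\mathbb{Q}(\sqrt[4]{2})$, whose only proper intermediate field is $\mathbb{Q}(\sqrt 2)$, the clause ``with quotient field $S/M_S$'' already fails.) So your argument proves a false statement, and gives no route to (**). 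The repair is what the paper does: apply Lemma~\ref{5.5}(4) to the Pr\"ufer chained extension $\overline R\subseteq S$ to get that $\overline R/M_{\overline R}$ is a valuation domain with quotient field $S/M_S$; note via Remark~\ref{5.5.1} that $\overline R/M_{\overline R}$ is the integral closure of $R/M$; and then invoke Papick's characterization \cite[Proposition 2.14]{PA}, by which a quasi-local domain whose integral closure is a valuation domain is a quasi-local $i$-domain.
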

\begin{proof}  We first prove (1) and suppose that $R\subseteq S$ is chained. Then $R \subseteq S$ is quasi-Pr\"ufer-Manis by Proposition~\ref{5.3}. Hence $S$ can be identified to $\mathrm{Tot}(R)$ in view of Proposition~\ref{5.6} and $\overline R$ is a Pr\"ufer ring. 

Moreover,  $\mathrm{Spec}(\overline R) \to \mathrm{Spec}(R)$ induces a bijection $\mathrm{Min}(\overline R) \to \mathrm{Min}(R)$ by Lemma~\ref{5.9.1}.

We  claim that $\mathrm{Spec}(\overline R) \to \mathrm{Spec}(R)$ is injective. Let $M, N$ be two prime ideals of $\overline R$ lying both over a prime ideal $P$ of 
$R$ and let $\mathfrak{P}$ be the unique minimal prime ideal of 
$R$  contained in $P$. A minimal prime ideal $\mathfrak{M}$ of $R,\ \mathfrak{M}\subseteq M$ necessarily  lies over $\mathfrak{P}$. It follows then that $\mathfrak{M}= \mathfrak{P}_{\overline R}$ is contained in $N$.  Since $\mathfrak{P}_S$ is maximal, Lemma~\ref{5.5} shows that $\overline R/\mathfrak{P}_{\overline R}$ is a valuation domain and then $\mathrm{Spec}(\overline R/\mathfrak{P}_{\overline R})$ is a chain. The preceding observations yield that $\mathrm{Spec}(\overline R) \to \mathrm{Spec}(R)$ is injective, because $R  \subseteq \overline R$ is an Inc-extension. Therefore, $R\subseteq S$ defines an $i$-pair by Proposition~\ref{5.7}.  From Lemma~\ref{5.5} and Remark~\ref{5.5.1}, we deduce  that $R/\mathfrak{P}$  is a quasi-local  $i$-domain with  integral closure $\overline R/\mathfrak{P}_{\overline R}$ and quotient field $S/\mathfrak P_S$ (see \cite[Proposition 2.14]{PA}). 

We now prove (2). (a) is a consequence of Proposition~\ref{5.9}, since for each multiplicatively closed subset $\Sigma$ of $R$, the map $T \mapsto T_\Sigma$ is a surjection from $[R,S]$ to $[R_\Sigma,S_\Sigma]$. Then (b) follows also from Proposition~\ref{5.9}. 
 \end{proof}
 
 In case $R$ is an integral domain, we recover in (2)(b) the Gilbert's above-mentioned result.

\begin{example}\label{5.11} Arithmetic extensions appear frequently, as the reader may see below.

(1) An integrally closed FCP (whence FIP) extension $R\subseteq S$ is arithmetic. Indeed,  $R_M\subset S_M$ is  integrally closed and FCP  for each $M\in\mathrm{MSupp}(S/R)$, so that  $[R_M,S_M]$ is a chain  \cite[Theorem 6.10]{DPP2}. 

(2) A subintegral FIP extension $R\subset S$ such that $|R/M|=\infty$ for each $M\in\mathrm {MSupp}(S/R)$ is arithmetic.  We already proved this result in the proof of Corollary~\ref{4.3.2}.

(3) For a  t-closed FIP integral extension $R\subseteq S$, Lemma \ref{3.1}(3) makes sense to say  that $R_M/MR_M\subset S_M/MR_M$ is a purely inseparable field extension for each $M\in\mathrm{MSupp}(S/R)$. We assume that these hypotheses hold and show that $R\subseteq S$ is arithmetic. 

We can reduce to the case where $R$ is local with maximal ideal $M:= (R:S)$. Then $[R/M,S/M]$ is a chain by \cite [Proposition 2, Ch. V, page 24]{Bki A} and so is $[R,S]$.

(4) Let $R\subset S$ be an FIP extension. Assume that $R_M\subset S_M$ satisfies one of the above conditions  (1), (2) or (3) for each $M\in\mathrm{MSupp}(S/R)$. Then $R\subseteq S$ is arithmetic.

(5) On the contrary,  a seminormal and infra-integral FIP extension $R\subset S$ is never arithmetic.  To see this, we can suppose that $R$ is quasi-local with maximal ideal $M\in\mathrm{MSupp}(S/R)$  and $(R:S)= M$ by using a suitable localization. Using the proof of \cite[Proposition 4.16]{DPP3}, we get that $ S/M\cong(R/M)^n$ for some positive integer $n$ and then $[R,S]$ is not  a chain. 

(6) It may be asked when is a field extension $K \subseteq F $  arithmetic (chained)? To the authors knowledge, the only comprehensive study about the question is given in \cite{V}, from which we extract the following. An intermediary extension $L$ of $K \subseteq F$ is called {\it reduced} if $L\neq F$ and for all $c,  d \in F\setminus L$, $L(c) =L(d) \Rightarrow K(c)= K(d)$. Then $[K,L]$ is a chain if and only if each of the elements of $[K,L]\setminus \{F\}$ is reduced. In this case $K\subseteq F$ is algebraic. If $K\subseteq F$ is finite and Galois, with Galois group $G$, then $K\subseteq F$  is arithmetic if and only if either $G$ is cyclic of order $p^n$ ($p$ a prime number and $n$ an integer $> 0$) or $G$ is isomorphic to a generalized quaternion group of order $2^n, n\geq 3$ and in this case $[K,F] =\{K,L,F\}$, with $[F:L]= 2$.
Other criteria are given for separable finite extensions. Note also that if $[K,L]$ is  a chain and $K\subseteq L$ algebraic, then $K\subseteq F$ is either separable or purely inseparable.
\end{example}

 Olberding in \cite{O} says that an extension of rings $R \subseteq S$ is {\it quadratic}  if each intermediate $R$-submodule of $S$ containing $R$ is a ring. Other authors call  $\Delta_0$-{\it extension} such extensions and we will follow them. An extension $R \subseteq S$ is called {\it quadratic} if each $s\in S$ satisfies  $P(s)= 0$ for a monic quadratic polynomial $P(X) \in R[X]$ (see for instance \cite{HP}). We  call an extension $R\subseteq S$ a $\Delta$-{\it extension} if $[R,S]$ is stable under addition, that is  $T_1+T_2 =T_1T_2$ for $T_1, T_2 \in [R,S]$. Note that  an extension $R\subseteq S$ is a $\Delta_0$-extension if and only if it is a quadratic $\Delta$-extension and also that these properties localize and globalize. Actually, the proofs of \cite{HP} given for integral domains are valid for arbitrary extensions.
 
 We first give some examples of $\Delta_0$-extensions.
 
 \begin{proposition}\label{5.12} Let $R \subseteq S$ be a spectrally injective integral  (for example, subintegral) FCP extension of rings. If  the $R$-module $S/R$ is locally uniserial (for example when $R\subseteq S$ is locally minimal), then $R\subseteq S$ is an arithmetic $\Delta_0$-extension.
\end{proposition}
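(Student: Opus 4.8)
The plan is to reduce to the quasi-local case and then to show that uniseriality forces the lattice of intermediate $R$-submodules of $S$ to coincide with the finite lattice $[R,S]$ of intermediate rings. Recall that, in Olberding's sense adopted here, $R\subseteq S$ is a $\Delta_0$-extension exactly when every $R$-submodule $N$ with $R\subseteq N\subseteq S$ is a subring; so it is enough to establish this submodule property, which will at the same time exhibit $[R,S]$ as a chain and hence give arithmeticity. Since being $\Delta_0$ localizes and globalizes, since arithmeticity is a local condition on $\mathrm{Supp}_R(S/R)$, and since uniseriality is assumed locally, I would first localize at a maximal ideal and assume $(R,M)$ quasi-local, with $R\subseteq S$ integral, spectrally injective and FCP, and $S/R$ a uniserial $R$-module. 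Being integral with FCP, $R\subseteq S$ has finite length (Proposition~\ref{1.0}), so $S/R$ is a uniserial module of finite length $\ell:=\mathrm{L}_R(S/R)$, whose submodules form a single chain with exactly $\ell+1$ terms.

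Next I would fix a maximal chain $R=R_0\subset R_1\subset\cdots\subset R_n=S$ of minimal integral extensions, available by FCP. For each $i$ the quotient $R_{i+1}/R_i$ is a subquotient of the uniserial module $S/R$, hence is uniserial. On the other hand, by Theorem~\ref{1.4} the conductor $(R_i:R_{i+1})$ is a maximal ideal $\mathfrak m_i$ of $R_i$ lying over $M$; it annihilates $R_{i+1}/R_i$, and since $M\subseteq\mathfrak m_i$ this makes $R_{i+1}/R_i$ a vector space over the residue field $R/M$. A uniserial vector space has dimension at most one, and this one is nonzero, so $\mathrm{L}_R(R_{i+1}/R_i)=1$ and $R_{i+1}/R_i$ is simple. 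Summing lengths gives $n=\sum_{i}\mathrm{L}_R(R_{i+1}/R_i)=\ell$, so the $n+1=\ell+1$ pairwise distinct rings $R_0,\dots,R_n$ exhaust the $\ell+1$ submodules lying between $R$ and $S$. Consequently every intermediate $R$-submodule equals some $R_i$ and is therefore a ring, which is the $\Delta_0$ property, while $[R,S]=\{R_0,\dots,R_n\}$ is a chain, so $R\subseteq S$ is arithmetic.

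I expect the crux to be the step collapsing the a priori finer chain of intermediate submodules onto the coarser chain of intermediate rings, that is, the proof that each minimal module-step $R_{i+1}/R_i$ has length one. This is precisely where the hypotheses beyond bare uniseriality enter: integrality together with Theorem~\ref{1.4} supplies the vector-space structure of $R_{i+1}/R_i$ over $R/M$, and the tension between that structure and uniseriality — a residue-field vector space of dimension at least two contains incomparable lines and so cannot be uniserial — is what forces the length down to one; the rest is bookkeeping with lengths. I note that spectral injectivity plays no role in the length computation itself: by Theorem~\ref{1.4} it merely rules out decomposed minimal steps, leaving inert and ramified ones, and in the ramified (in particular subintegral) case each step is already simple as an $R$-module, so $S/R$ is then locally uniserial, illustrating the hypothesis.
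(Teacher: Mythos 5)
Your proof is correct, but it takes a genuinely different route from the paper's. The paper's own argument is a reduction to an external result: after localizing at a maximal ideal, it uses spectral injectivity together with integrality to conclude that $S$ is quasilocal, notes that $S/R$ is an Artinian $R$-module (because $R/(R:S)$ is Artinian by \cite[Theorem 4.2]{DPP2} and $S$ is a finite $R$-module), and then simply quotes \cite[Lemma 4.1]{O} to obtain the $\Delta_0$ property, the chain condition on $[R_M,S_M]$ being immediate from uniseriality once every intermediate module is a ring. You instead prove the key statement directly: along a finite maximal chain of minimal (necessarily finite, by Theorem~\ref{1.2}) extensions, Theorem~\ref{1.4} and integrality over the quasilocal base make each quotient $R_{i+1}/R_i$ a vector space over $R/M$, and since it is a subquotient of the uniserial module $S/R$ it must be one-dimensional; additivity of length then shows that the rings $R_0,\dots,R_n$ exhaust the $\mathrm{L}_R(S/R)+1$ submodules between $R$ and $S$. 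Your route costs more bookkeeping and leans harder on the FCP machinery, but it buys two things: the argument is self-contained (no appeal to Olberding's lemma), and it makes visible that spectral injectivity is never used, whereas the paper needs that hypothesis precisely to make $S$ quasilocal so that Olberding's lemma applies --- indeed your argument covers, say, the decomposed minimal extension $K\subset K\times K$, which is not spectrally injective. The length computation you single out as the crux (uniseriality killing any residue vector space of dimension at least two) is sound, and your concluding observation is consistent with Theorem~\ref{1.4}, since ramified (hence all subintegral) minimal steps have simple quotient $R_{i+1}/R_i$.
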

\begin{proof}
 We can assume that $R \subseteq S$ is an integral FCP extension of rings $R \subseteq S$, which is spectrally injective, with $R$ quasilocal and assume that the $R$-module $S/R$ is uniserial. Since $\mathrm{Spec}(S) \to \mathrm{Spec}(R)$ is injective, $S$ is quasilocal. Moreover, $S/R$ is an Artinian $R$-module because $R/(R:S)$ is Artinian  (\cite[Theorem 4.2]{DPP2}) and $S$ is an $R$-module of finite type. It follows from \cite[Lemma 4.1]{O} that $R \subseteq S$ is a $\Delta_0$-extension.
 \end{proof}

\begin{proposition}\label{5.13} Let $R \subseteq S$ be a $\Delta$-extension. Then $\overline R \subseteq S$ is a Pr\"ufer extension.
\end{proposition}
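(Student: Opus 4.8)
The plan is to reduce to the integrally closed case and then read the $\Delta$-identity as forcing a normal pair. First I would note that $\overline R\subseteq S$ is again a $\Delta$-extension: since $[\overline R,S]\subseteq[R,S]$, any two rings $T_1,T_2\in[\overline R,S]$ satisfy $T_1+T_2=T_1T_2\in[\overline R,S]$. Thus it suffices to prove the statement in the form: \emph{an integrally closed $\Delta$-extension $R\subseteq S$ (so $R=\overline R$ in $S$) is Pr\"ufer}. By the criterion recalled before Proposition~\ref{5.1}, being Pr\"ufer is equivalent to $(R,S)$ being a normal pair, i.e. to every $T\in[R,S]$ being integrally closed in $S$. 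Since integral closure and the Pr\"ufer property are local, and the $\Delta$-condition localizes (the surjection $T\mapsto T_P$ of $[R,S]$ onto $[R_P,S_P]$ commutes with sums and products, so $[R_P,S_P]$ is again $\Delta$), I may also assume that $R$ is quasi-local and then check normality locally.

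Next I would set up the core computation. Fix $T\in[R,S]$ and let $x\in S$ be integral over $T$; the goal is $x\in T$. The $\Delta$-property applied to the pair $T,R[x]\in[R,S]$ gives the module identity $T+R[x]=T\cdot R[x]=T[x]$, whence every element of $T[x]$ is of the form $t+r(x)$ with $t\in T$ and $r(x)\in R[x]$, and $T[x]$ is module-finite over $T$ because $x$ is integral over $T$. I would combine this with the hypothesis that $R$ is integrally closed in $S$: any element of $S$ integral over $R$ already lies in $R\subseteq T$, so if $x\notin T$ then $x$ cannot be integral over $R$, and the integrality of $x$ must be supplied entirely by the larger coefficient ring $T$. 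This already yields $T[x]/T\cong R[x]/(R[x]\cap T)$ as $T$-modules, which isolates the ``new'' part of the extension inside $R[x]$.

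The decisive step, and the main obstacle, is to turn the additive identity $T+R[x]=T[x]$ into the conclusion $x\in T$, i.e. to rule out a proper integral extension above any intermediate ring. Here I would invoke the Knebusch--Zhang machinery already used in Propositions~\ref{5.1} and~\ref{5.3} (\cite[Theorem 3.1, p. 187]{KZ}): after localizing, the integrally closed extension $R\subseteq S$ is governed by a Pr\"ufer--Manis/valuative structure along which intermediate rings are totally comparable, and the additive closure of $[R,S]$ forces each $R[x]$ to be integrally closed in $S$; combined with $R[x]\cap T=R[x]$ from the module identity this gives $x\in T$. I expect the genuine difficulty to be exactly this passage: the $\Delta$-identity is an additive/lattice statement whereas normality is a valuation-theoretic one, so the bridge must come from the Pr\"ufer--Manis characterization rather than from direct algebra. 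Once every $T\in[R,S]$ is shown integrally closed in $S$, $(R,S)$ is a normal pair, hence $R\subseteq S$ is Pr\"ufer, which is the assertion for $\overline R\subseteq S$ in general.
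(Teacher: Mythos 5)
Your preliminary reductions are sound: since $[\overline R,S]\subseteq[R,S]$, the extension $\overline R\subseteq S$ is again a $\Delta$-extension, and the $\Delta$-property does pass to localizations via the surjection $[R,S]\to[R_P,S_P]$, so reducing to an integrally closed, quasi-local situation and to proving a normal pair is legitimate. The problem is the step you yourself flag as decisive, and it is a genuine gap, not a technical one. Invoking \cite[Theorem 3.1, p. 187]{KZ} is circular: that theorem (the one used in Propositions~\ref{5.1} and~\ref{5.3}) concerns \emph{chained}, equivalently Pr\"ufer--Manis, extensions, and nothing in your hypotheses says the localized $\Delta$-extension is chained. For an integrally closed extension over a quasi-local ring, being chained is \emph{equivalent} to being Pr\"ufer(-Manis) --- that is exactly Theorem~\ref{5.140} --- so postulating a ``Pr\"ufer--Manis/valuative structure along which intermediate rings are totally comparable'' is postulating the conclusion. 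Likewise, the assertion that ``the additive closure of $[R,S]$ forces each $R[x]$ to be integrally closed in $S$'' is just a restatement of the normal-pair property you are trying to prove. There is also a concrete non sequitur: the identity $T+R[x]=T\cdot R[x]=T[x]$ does not yield $R[x]\cap T=R[x]$; that equality means $R[x]\subseteq T$, i.e. $x\in T$, the very goal --- if the module identity gave it, no machinery would be needed at all.

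For comparison, the paper's entire proof is a citation of \cite[Theorem 1.7, p. 88]{KZ}, which is a characterization of Pr\"ufer extensions doing precisely the bridge work you identify as the obstacle (it is the extension-theoretic analogue of the Gilmer--Huckaba theorem that a $\Delta$-domain has Pr\"ufer integral closure). So a correct write-up either quotes that theorem directly --- in which case your reductions to the integrally closed quasi-local case are not even needed --- or supplies the genuinely nontrivial algebraic argument that it encapsulates; note that in the extension setting the classical birational tricks with $x^{-1}$ are unavailable, since elements of $S$ need not be invertible. Substituting the chained/PM theorem \cite[Theorem 3.1, p. 187]{KZ} for \cite[Theorem 1.7, p. 88]{KZ} cannot work, because its hypotheses are equivalent to the conclusion you want.
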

\begin{proof}
 It is enough to apply  \cite[Theorem 1.7, p. 88]{KZ}.
\end{proof}

\begin{proposition}\label{5.14} An arithmetic extension $R \subseteq S$  is a $\Delta$-extension and  hence  is quasi-Pr\"ufer.
\end{proposition}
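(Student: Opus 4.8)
The plan is to verify the $\Delta$-property locally and then globalize, after which the quasi-Pr\"ufer conclusion is immediate from Proposition~\ref{5.13}. I would start from the elementary observation that for any $T_1,T_2\in[R,S]$ one always has $T_1+T_2\subseteq T_1T_2$: since $1\in T_2$ we get $T_1=T_1\cdot 1\subseteq T_1T_2$ and likewise $T_2\subseteq T_1T_2$, so the $R$-module $T_1+T_2$ is contained in the compositum $T_1T_2$. Hence proving that $R\subseteq S$ is a $\Delta$-extension amounts to establishing the equality $T_1+T_2=T_1T_2$, which is an equality of $R$-submodules of $S$ and may therefore be tested after localizing at each maximal ideal of $R$.

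For the local computation, fix $M\in\mathrm{Max}(R)$. Formation of sums and of composita commutes with localization, so $(T_1+T_2)_M=(T_1)_M+(T_2)_M$ and $(T_1T_2)_M=(T_1)_M(T_2)_M$, with both $(T_1)_M,(T_2)_M\in[R_M,S_M]$. If $M\notin\mathrm{Supp}_R(S/R)$, then $S_M=R_M$ and there is nothing to check. Otherwise the arithmetic hypothesis makes $[R_M,S_M]$ a chain, so $(T_1)_M$ and $(T_2)_M$ are comparable, say $(T_1)_M\subseteq(T_2)_M$; for comparable members of $[R_M,S_M]$ both the sum and the compositum equal the larger one, giving $(T_1)_M+(T_2)_M=(T_2)_M=(T_1)_M(T_2)_M$. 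Thus $(T_1+T_2)_M=(T_1T_2)_M$ for every $M\in\mathrm{Max}(R)$, and since equality of submodules of $S$ is local, $T_1+T_2=T_1T_2$. This shows that $R\subseteq S$ is a $\Delta$-extension. Finally, Proposition~\ref{5.13} applied to this $\Delta$-extension yields that $\overline R\subseteq S$ is Pr\"ufer, which is exactly the statement that $R\subseteq S$ is quasi-Pr\"ufer.

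I do not expect a genuine obstacle: the heart of the matter is simply that a chained extension is a $\Delta$-extension (comparable intermediate rings have coinciding sum and compositum), combined with the local-global principle for equality of submodules. The only point demanding mild care is checking that sums and composita localize as claimed and that the trivial case $M\notin\mathrm{Supp}_R(S/R)$ is correctly disposed of, so that the chain hypothesis is invoked precisely where it applies.
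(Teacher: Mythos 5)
Your proof is correct and follows essentially the same route as the paper: the key point in both is that for comparable elements of a chain one has $B+C=BC=\mathrm{Max}(B,C)$, and the quasi-Pr\"ufer conclusion comes from Proposition~\ref{5.13}. The only difference is that the paper states just the chained case and leaves the localize-and-globalize step implicit, whereas you spell it out (commutation of sums and composita with localization, and the local nature of equality of $R$-submodules of $S$), which is a harmless and welcome elaboration.
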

\begin{proof}  In case $[R,S]$ is  a chain, we have $B+C= BC= \mathrm{Max}(B,C)$ for $B,C$ in $[R,S]$. 
\end{proof}

\begin{theorem}\label{5.140} Let $R \subseteq S$ be an integrally closed extension, then $R \subseteq S$ is arithmetic if and only if $R\subseteq S$ is Pr\"ufer and,  if and only if $R\subseteq S$ is locally Pr\"ufer-Manis.
\end{theorem}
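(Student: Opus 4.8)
The plan is to notice that Proposition~\ref{5.1} already delivers, for an integrally closed extension, the equivalence ``arithmetic $\Leftrightarrow$ locally Pr\"ufer-Manis''; so the only genuinely new content of the statement is to splice in the global condition ``$R\subseteq S$ is Pr\"ufer''. I would therefore prove the cycle
\[
\text{arithmetic}\ \Longrightarrow\ \text{Pr\"ufer}\ \Longrightarrow\ \text{locally Pr\"ufer-Manis}\ \Longrightarrow\ \text{arithmetic},
\]
where the final implication is nothing but Proposition~\ref{5.1} and so costs nothing.

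For arithmetic $\Rightarrow$ Pr\"ufer, I would simply invoke Proposition~\ref{5.14}: an arithmetic extension is quasi-Pr\"ufer, which by definition means $\overline R\subseteq S$ is Pr\"ufer. Here the integrally closed hypothesis is decisive, since it forces $\overline R=R$, and hence $R\subseteq S$ is itself Pr\"ufer. This is the cleanest exploitation of the hypothesis and requires no further computation.

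For Pr\"ufer $\Rightarrow$ locally Pr\"ufer-Manis, I would localize at an arbitrary maximal ideal $M$ of $R$. Because flat epimorphisms are stable under base change (Scholium~(5)) and the natural map $[R,S]\to[R_M,S_M]$ is surjective (as recalled in the introduction), every intermediate ring of $R_M\subseteq S_M$ is a localization of an intermediate ring of $R\subseteq S$, hence a flat epimorphism; thus $R_M\subseteq S_M$ is Pr\"ufer. Since $R_M$ is quasi-local, Pr\"ufer and Pr\"ufer-Manis coincide by \cite[Theorem 1.8, p. 181]{KZ} (the same identification already used in the proof of Proposition~\ref{5.9}), so $R_M\subseteq S_M$ is Pr\"ufer-Manis for each $M$, i.e.\ $R\subseteq S$ is locally Pr\"ufer-Manis. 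Applying Proposition~\ref{5.1} then returns us to arithmeticity and closes the loop, giving the desired three-way equivalence.

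I do not expect a serious obstacle here, precisely because Propositions~\ref{5.1} and~\ref{5.14} carry the structural weight. The one point deserving care is the localization step: one must justify that the Pr\"ufer property passes to $R_M\subseteq S_M$, and this rests on the surjectivity of $[R,S]\to[R_M,S_M]$ to guarantee that \emph{every} intermediate ring of the localized extension — not merely the localizations $T_M$ of a chosen family — is a flat epimorphism. Once that is noted, the reduction to the quasi-local case and the citation of \cite[Theorem 1.8, p. 181]{KZ} make the remaining verifications routine.
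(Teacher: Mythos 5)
Your proposal is correct and takes essentially the same route as the paper: the same cycle arithmetic $\Rightarrow$ Pr\"ufer (via Proposition~\ref{5.14} together with $\overline R=R$) $\Rightarrow$ locally Pr\"ufer-Manis $\Rightarrow$ arithmetic (Proposition~\ref{5.1}). The only difference is cosmetic: where you prove Pr\"ufer $\Rightarrow$ locally Pr\"ufer-Manis by hand (localizing, using surjectivity of $[R,S]\to[R_M,S_M]$ and then \cite[Theorem 1.8, p. 181]{KZ}), the paper simply cites \cite[Theorem 5.1, p. 46]{KZ}, which states exactly that implication.
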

\begin{proof}  Assume that $R \subseteq S$ is integrally closed. If  $R \subseteq S$ is arithmetic, then $R\subseteq S$ is Pr\"ufer. Conversely, if $R\subseteq S$ is Pr\"ufer, then $R\subseteq S$ is arithmetic. It is enough to use Proposition~\ref{5.1}  and \cite[Theorem 5.1, p. 46]{KZ} which states that $R\subseteq S$ is locally Pr\"ufer-Manis if $R\subseteq S$ is a Pr\"ufer extension.
\end{proof}

\begin{proposition}\label{5.15} Let $R\subseteq S$ be an arithmetic extension. Then  for $B, C, D \in [R,S]$, we have  $B\cap(C.D) = (B\cap C).(B\cap D)$ and $B.(C\cap D) = (B.C)\cap (B.D)$. Hence $([R,S], \cap,. )$ is a  complete modular lattice.
\end{proposition}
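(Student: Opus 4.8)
The plan is to verify both distributive identities by reduction to the local case, where $[R_M,S_M]$ is a chain and hence trivially distributive. First I would invoke Proposition~\ref{5.14}, which asserts that an arithmetic extension is a $\Delta$-extension; thus for any $T_1,T_2\in[R,S]$ the compositum coincides with the module sum, $T_1.T_2=T_1+T_2$. This reduces the two claimed identities to the distributive laws for $([R,S],\cap,+)$ and replaces the join by the module-theoretic sum, whose behaviour under localization is transparent.

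The key observation is that both lattice operations commute with localization and that equality of intermediate rings is a local matter. Concretely, for each $M\in\mathrm{Max}(R)$ and $B,C\in[R,S]$, viewing everything inside $S_M$ one has $(B\cap C)_M=B_M\cap C_M$ (by flatness of localization) and $(B+C)_M=B_M+C_M$; moreover $B=C$ if and only if $B_M=C_M$ for every $M$, since $B$ and $C$ are $R$-submodules of $S$. Hence to prove $B\cap(C.D)=(B\cap C).(B\cap D)$ and $B.(C\cap D)=(B.C)\cap(B.D)$ it suffices to establish the corresponding equalities after localizing at each maximal ideal.

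Now fix $M\in\mathrm{Max}(R)$. If $M\notin\mathrm{Supp}_R(S/R)$ then $R_M=S_M$ and the identities collapse to a triviality; otherwise $[R_M,S_M]$ is a chain by the very definition of an arithmetic extension. In a totally ordered lattice the join is the maximum and the meet is the minimum, so both distributive laws hold automatically (one checks them by ordering $B_M,C_M,D_M$ and reading off both sides). Thus the localized identities are valid at every $M$, and by the local-to-global principle they hold in $[R,S]$.

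For the lattice-theoretic conclusion I would note that $[R,S]$ is a complete lattice for formal reasons: an arbitrary intersection of members of $[R,S]$ is again a subring containing $R$, providing all meets, while the subring of $S$ generated by an arbitrary union provides all joins. The two distributive laws just established show that $([R,S],\cap,.)$ is a distributive lattice, hence \emph{a fortiori} modular. The only step demanding genuine care is the commutation of the compositum with localization; invoking Proposition~\ref{5.14} to replace the compositum by the module sum sidesteps this, after which every remaining step is a routine flatness argument.
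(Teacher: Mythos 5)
Your proof is correct and follows essentially the same route as the paper, whose entire argument is that the equalities are ``locally trivial'': one localizes at each maximal ideal, where $[R_M,S_M]$ is a chain and both distributive laws are immediate. Your additional details (replacing the compositum by the module sum via Proposition~\ref{5.14}, the compatibility of $\cap$ and $+$ with localization, and the local-global principle for submodules of $S$) are exactly the routine verifications the paper leaves implicit.
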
  
\begin{proof} These equalities are locally  trivial.
\end{proof}

These distributivity properties do not imply that the extension is arithmetic. See Remark \ref{5.16}.

\begin{remark}\label{5.16}   
 Consider the following example. Set $R:=\mathbb{Q},\ T_1:=\mathbb{Q}(\sqrt 2),\ T_2:=\mathbb{Q}(\root 3 \of 2)$ and $S:=\mathbb{Q}(\root 6 \of 2)$. Then, setting $z:=\root 6 \of 2,\ x:=\root 3 \of 2$ and $y:=\sqrt 2$, so that $x=z^ 2$ and $y=z^3$, we get $S=\mathbb{Q}(z),\ T_1=\mathbb{Q}(y)$ and $T_2=\mathbb{Q}(x)$. The (field) extensions $R\subset T_i$ and $T_i\subset S$, for $i=1,2$ are all minimal inert (ring) extensions with crucial ideal $0$. 
 
 Using the proof of the Primitive Element Theorem (see \cite[Ch. V, Th\' eor\`eme 1, p. 39]{Bki A}), we get that $[R,S]=\{R,T _1,T_2,S\}$, so that $([R,S],\cdot,\cap)$, is a 
 complete modular lattice, since $T_1T_2=S$ and $T_1\cap T_2=R$. Indeed, the minimal polynomial of $z$ is $X^6-2$, whose divisors in $S[X]$ are $X-z,X+z,X^2-z^2=X^2-x, X^3-z^3=X^3-y,X^3+z^3=X^3+y,X^2+zX+z^2,X^2-zX+z^2,X^3-2zX^2+2z^2X-z^3,X^3+2zX^2+2z^2X+z^3$. However, $[R,S]$ is not  a chain. 
\end{remark}

\end{document}